\tikzset{node distance=2cm, auto}
\let\mathcal\mathscr
\numberwithin{equation}{section}
\newtheorem{theorem}{Theorem}[section]
\newtheorem*{theorem*}{Theorem}
\newtheorem{lemma}[theorem]{Lemma}
\newtheorem{proposition}[theorem]{Proposition}
\theoremstyle{definition}
\newtheorem*{ack}{Acknowledgements}
\newtheorem*{rem*}{Remark}
\newtheorem*{grem}{General remarks}
\newtheorem{definition}[theorem]{Definition}
\renewcommand{\rho}{\varrho}
\newcommand{\1}{\mathbf{1}}
\newcommand{\Br}{{\rm Br}}
\newcommand{\0}{\mathbf{0}}
\newcommand{\PP}{\mathbb{P}}
\renewcommand{\AA}{\mathbb{A}}
\newcommand{\ZZ}{\mathbb{Z}}
\newcommand{\NN}{\mathbb{N}}
\newcommand{\QQ}{\mathbb{Q}}
\newcommand{\RR}{\mathbb{R}}
\newcommand{\CC}{\mathbb{C}}
\newcommand{\cA}{\mathcal{A}}
\newcommand{\beps}{\boldsymbol{\epsilon}}
\renewcommand{\leq}{\leqslant}
\renewcommand{\geq}{\geqslant}
\renewcommand{\a}{\mathbf{a}}
\renewcommand{\b}{\mathbf{b}}
\newcommand{\m}{\mathbf{m}}
\newcommand{\bn}{\mathbf{n}}
\renewcommand{\u}{\mathbf{u}}
\newcommand{\x}{\mathbf{x}}
\newcommand{\y}{\mathbf{y}}
\DeclareMathOperator{\sign}{sign}
\DeclareMathOperator{\nf}{\mathbf{N}}
\newcommand{\Mod}[1]{\;(\operatorname{mod}\,#1)}
\newcommand{\id}{\rm{id}}
\newcommand\bN{\mathbf{N}}
\DeclareMathOperator{\vol}{vol}
\newcommand{\eps}{\varepsilon}
\begin{document}

\title[The square-free representation function of a norm form]
{On the square-free representation function \\ of a norm form and 
nilsequences}
\author{Lilian Matthiesen}
\address{Institut de Math\'ematiques de Jussieu --- Paris Rive Gauche\\
UMR 7586\\
B\^atiment Sophie Germain\\
Case 7012\\
75205 Paris Cedex 13\\
France}
\email{lilian.matthiesen@imj-prg.fr}

\thanks{While working on this paper the author was supported by a postdoctoral 
fellowship of the Fondation Sciences Math{\'e}matiques de Paris}

\begin{abstract}
We show that the restriction to square-free numbers of the representation 
function attached to a norm form does not correlate with nilsequences. 
By combining this result with previous work of Browning and the author, we obtain 
an application that is used in recent work of Harpaz and Wittenberg on 
the fibration method for rational points.
\end{abstract}

\maketitle

\section{Introduction}
Let $K_1, \dots, K_r$ be finite extensions of $\QQ$ of degree at least $2$ and 
let $n_1, \dots, n_r \geq 2$ denote their respective degrees.
For each $1 \leq i \leq r$ let $\{\omega_1^{(i)},\dots,\omega_{n_i}^{(i)}\}$ be a 
$\ZZ$-basis for the ring of integers of $K_i$ and 
denote by
$$
\nf_{K_i}(x_1,\dots,x_{n_i})=
N_{K_i/\QQ}(x_1\omega_1+\dots+x_{n_i}\omega_{n_i})
$$ 
the corresponding norm form, where $N_{K_i/\QQ}$ is the field norm.
One of the central results from \cite{bm}, stated as \cite[Theorem 1.3]{bm}, 
proves weak approximation for varieties
$X \subset \AA_{\QQ}^{n_1 + \dots + n_r + s}$ defined by the 
system of equations
\begin{equation}\label{eq:system}
0 \not= 
\nf_{K_i}(x_1^{(i)},\dots,x_{n_i}^{(i)})
= f_i(u_1, \dots, u_s), \qquad (1 \leq i \leq r),
\end{equation}
where $s\geq 2$ and $f_1, \dots, f_r \in \ZZ[u_1,\dots,u_s]$ 
are pairwise non-proportional linear forms.
This weak approximation result is deduced from an asymptotic formula for the 
number of (suitably restricted) integral points on $X$ 
(see \cite[Theorem 5.2]{bm}).

Our aim here is to develop refinements of both \cite[Theorem 5.2]{bm} and the 
weak approximation result, that allow one to deduce the linear case of a 
conjecture due to Harpaz and Wittenberg \cite{HW} 
(see \cite[\S9]{HW} for details).
Building on this linear case, they establish the following very strong fibration 
theorem for the existence of rational points.

\begin{theorem*}[Harpaz--Wittenberg \cite{HW}, Theorem 9.27]
Let $X$ be a smooth, proper, irreducible variety over $\QQ$ and let
$f : X \to \PP^1_{\QQ}$ be a dominant morphism, with rationally connected 
geometric generic fiber.
Suppose further that all non-split fibers lie over rational points of 
$\PP^1_{\QQ}$. 
If $X_c(\QQ)$ is dense in $X_c(\AA_{\QQ})^{\Br(X_c)}$ for
every rational point $c$ of a Hilbert subset of $\PP^1_{\QQ}$,
then $X(\QQ)$ is dense in $X(\AA_{\QQ})^{\Br(X)}$.
\end{theorem*}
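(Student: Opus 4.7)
The theorem stated is the Harpaz--Wittenberg fibration theorem; the proof of this theorem is carried out in \cite[\S9]{HW} and depends on the \emph{linear case} of the Harpaz--Wittenberg conjecture formulated in the same section, for which the present paper supplies the analytic input. Accordingly, my sketch outlines the pipeline from the main theorem of this paper to the linear case of the Harpaz--Wittenberg conjecture, from which the fibration theorem then follows by the general machinery of \cite[\S9]{HW}.

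\emph{Geometric reduction.} Given $(P_v)_v \in X(\AA_\QQ)^{\Br(X)}$ and a finite set $S$ of places, the plan is to find a rational $c \in \PP^1(\QQ)$ in the prescribed Hilbert subset together with an adelic point of $X_c$ close to $(P_v)_v$ at the places in $S$ and orthogonal to $\Br(X_c)$; the hypothesis of the theorem then yields a rational point of $X_c \subset X$ approximating $(P_v)_v$. Following the descent formalism of Colliot-Th\'el\`ene--Sansuc and Harari, one attaches to each non-split closed point $M_i \in \PP^1_\QQ$ a finite extension $K_i/\QQ$ (a residue field of an appropriate fiber component) and a rational linear form $f_i$ in suitable affine coordinates on $\PP^1_\QQ$; the existence of such a $c$ then reduces to the following equidistribution statement: integers $u$ in a prescribed congruence class for which every $f_i(u)$ is representable by $\nf_{K_i}$ should equidistribute among any further refinement by congruence-plus-nilsequence data coming from $\Br(X)$.

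\emph{Analytic content.} Quantitatively, this amounts to asymptotically evaluating $\sum_{u \leq N}\prod_{i=1}^{r} r_i^\flat(f_i(u))\,\Psi(u)$, where $r_i^\flat$ is the restriction to square-free integers of the representation function of $\nf_{K_i}$ and $\Psi$ is an arbitrary nilsequence of bounded complexity. The case $\Psi \equiv 1$ is the content of \cite[Theorem~5.2]{bm} and produces the expected product of local densities. A general $\Psi$ is then handled by the generalised von Neumann inequality together with the Green--Tao--Ziegler inverse theorem for Gowers norms: both reduce the $\Psi$-twisted sum to the assertion that each $r_i^\flat$ is asymptotically orthogonal to nilsequences. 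Establishing precisely this orthogonality is the main theorem of the present paper; inserting the resulting equidistribution back into \cite[\S9]{HW} closes the linear case of the Harpaz--Wittenberg conjecture and hence the fibration theorem.

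\emph{Main obstacle.} The hard step is the orthogonality claim itself. The function $r_i^\flat$ is \emph{unbounded} on average and its local behaviour is governed by the Frobenian statistics of the splitting of primes in $K_i/\QQ$, whereas existing nilsequence-correlation results for multiplicative-type functions require either boundedness or a very tame mean value. The plan is therefore to use a $W$-trick adapted to the normal closure of $K_i$ to remove the small-prime obstructions, Shiu-type upper bounds to control the tails of $r_i^\flat$ on short arithmetic progressions, and a decomposition of $r_i^\flat$ into pieces with a controlled multiplicative structure on which the Green--Tao methodology can be run in the unbounded regime.
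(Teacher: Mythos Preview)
Your opening observation is correct: the paper does not prove this theorem; it is quoted from \cite{HW} as motivation, and the paper's contribution is the analytic input (Theorem~\ref{t:WA}) used in \cite[\S9]{HW} to establish the linear case of their conjecture. There is thus no proof in the paper to compare against, and your high-level framing---that the fibration theorem follows from the linear case, which in turn consumes the results proved here---is accurate.

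That said, your description of the pipeline contains some inaccuracies. First, the nilsequences do not arise ``from $\Br(X)$'': the conditions imposed in \cite{HW} are local (congruence and sign) conditions, and the analytic statement actually consumed there is the concrete weak-approximation result Theorem~\ref{t:WA}, not a nilsequence-twisted asymptotic. Nilsequences enter only internally to the paper, as the device (via the Green--Tao transference principle and the inverse theorem \cite{GTZ}) through which the \emph{untwisted} asymptotic Theorem~\ref{t:NB} is established; Theorem~\ref{t:WA} is then deduced from Theorem~\ref{t:NB} by checking positivity of local factors. Second, your account of how orthogonality is proved does not match the paper: there are no Shiu-type bounds, no multiplicative decomposition of $R^*_S$, and the $W$-trick \eqref{def:W} is not adapted to any normal closure. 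Instead one writes $R^*_S(m)=\sum_{d^2\mid m,\,(d,W)=1}\mu(d)R(m)$ and reduces to the geometric lattice-point argument for $R$ from \cite{bm}; the new work (Sections~\ref{s:progressions}--\ref{s:technical}) is to show that this inclusion--exclusion over $d$ preserves the equidistribution properties of the polynomial subsequences that arise. The pseudo-random majorant is inherited unchanged from \cite{bm}, so the unbounded-function issue you flag is already resolved there.
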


The results we discuss here analyse instead of \eqref{eq:system} 
the following system of equations:
\begin{equation}\label{eq:system-sqf}
0 \not= 
\nf_{K_i}(x_1^{(i)},\dots,x_{n_i}^{(i)})
= f_i(u_1, \dots, u_s) 
\mu(f_i(u_1, \dots, u_s))^2,
\qquad (1 \leq i \leq r),
\end{equation}
where $\mu$ denotes the M\"obius function.
Note that counting integral solutions to this system is a question concerning the 
representation of square-free integers by norm forms.

The weak approximation type result relevant to \cite{HW} is the following.
\begin{theorem} \label{t:WA}
Let $K_1, \dots, K_r$ be finite extensions of $\QQ$ of degree at least $2$.
Let $f_1, \dots,f_r \in \ZZ[u_1, \dots,u_s]$ be pairwise non-proportional linear 
forms.  
Let $S$ be a finite set of primes that contains all primes 
$p \leq C$ for some constant $C$ only depending on
$f_1, \dots,f_r$ and $K_1, \dots, K_r$.
Let $\u \in \ZZ^s$ be a vector such that $f_i(\u)$ is non-zero and a local 
integral norm from $K_i$ at all places of $S$ and also at the real place.
Then there exists a vector $\u' \in \ZZ^s$ such that
\begin{enumerate}
\item $\u'$ is arbitrarily close to $\u$ at the places of $S$;
\item $\u'$ belongs to any given open convex cone of $\RR^s$ which contains $\u$;
\item $f_i(\u')$ is `square-free outside of $S$' in the sense that 
$v_p(f_i(\u)) \geq 2$ only if $p \in S$,
and $f_i(\u')$ is the norm of an integral element of $K_i$, for all 
$i$.
\end{enumerate}
\end{theorem}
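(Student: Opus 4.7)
The plan is to deduce Theorem~\ref{t:WA} from a square-free refinement of the asymptotic counting formula \cite[Theorem 5.2]{bm}, rather than directly. Concretely, fix a small box $B \subset \RR^s$ centred at $\u$ that is contained in the convex cone of (ii), together with a congruence class modulo some large $S$-power ensuring that any $\u' \in B \cap \ZZ^s$ in this class is arbitrarily close to $\u$ at every place of $S$, as required by (i). I would then count the integer points $\u' \in \lambda B \cap \ZZ^s$, as $\lambda \to \infty$, that lie in the prescribed congruence class and satisfy the system \eqref{eq:system-sqf}, i.e.\ for which each $f_i(\u')$ is $S$-square-free and equals a norm from $K_i$.

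The asymptotic for the corresponding count without the square-free restriction is precisely \cite[Theorem 5.2]{bm}. To obtain the square-free version, I would insert the indicator $\mu^2(f_i(\u'))$ on the primes outside $S$ and rerun the generalised Hardy--Littlewood/nilsequence machinery that underlies \cite{bm}. The one new analytic ingredient needed is that the norm-form representation function, restricted to square-free integers, does not correlate with nilsequences; this is exactly the main result announced in the abstract, and it allows the minor-arc/higher-order-Fourier analysis of \cite{bm} to go through unchanged.

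The resulting main term factors as an archimedean volume integral over $B$ times a product of local densities $\prod_p \sigma_p$. Positivity must then be verified: at the real place and at $p \in S$, positivity is immediate from the hypothesis that $f_i(\u)$ is a local integral norm from $K_i$, after possibly shrinking $B$ so that the hypothesis persists throughout $B$ by continuity; at primes $p \notin S$, the $S$-square-free restriction only imposes $v_p(f_i(\u')) \in \{0,1\}$, which is compatible with $f_i(\u')$ being a local norm because the norm group $\nm(\fo_{K_i,\fp}^\times)$ surjects onto $\ZZ_p^\times$ for unramified $\fp$, while for the finitely many ramified or small $p\notin S$ one checks compatibility directly using the definition of $S$. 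Since each $K_i$ has degree $\geq 2$, the Euler product $\prod_p \sigma_p$ converges to a strictly positive value.

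Consequently the count is $\gg \lambda^s$, so in particular at least one admissible $\u'$ exists, with the additional property that $f_i(\u')$ is a global norm of an integral element of $K_i$ for every $i$. The main obstacle in carrying out this plan is the square-free refinement of the asymptotic itself, i.e.\ importing the nilsequence non-correlation result into the framework of \cite{bm}; the verification of local positivity and the packaging into a weak-approximation statement are by comparison routine.
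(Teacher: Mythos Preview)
Your proposal is correct and follows essentially the same route as the paper: deduce Theorem~\ref{t:WA} from the square-free asymptotic formula (stated here as Theorem~\ref{t:NB}), whose proof in turn rests on the nilsequence non-correlation result, and then check that the archimedean and $p$-adic factors in the main term are positive using the local-norm hypotheses at $S \cup \{\infty\}$ together with an estimate of the shape $\beta_p = 1 + O(p^{-2})$ for $p \notin S$. The paper handles positivity at $p \notin S$ via this last estimate (Proposition~\ref{p:beta}) rather than your norm-group surjectivity sketch, but the overall structure and the identification of the key analytic input are the same.
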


Just as in the case of the weak approximation result from \cite{bm}, 
this result is a corollary to an asymptotic formula for the number of (suitably 
restricted) integral solutions to \eqref{eq:system-sqf} which we state as Theorem 
\ref{t:NB} below.  
The deduction of Theorem \ref{t:WA} from Theorem \ref{t:NB} will be carried out 
in Section \ref{s:local}.
In order to state Theorem \ref{t:NB}, we proceed by introducing a square-free 
representation function for any given norm form $\nf_{K}$ associated to a 
field extension $K/\QQ$ of degree $n\geq2$.

Let $\{\omega_1, \dots, \omega_n\}$ denote the basis with respect to which 
$\nf_{K}$ 
is defined. 
As in \cite[\S 2]{bm}, we let $\mathfrak{D}_{+} \subset \RR^n$ denote a 
fundamental domain for the equivalence relation that identifies 
two vectors $\mathbf x$ and $\mathbf y$ if and only if 
$x_1\omega_1+\dots+x_n\omega_n$ and 
$y_1\omega_1+\dots+y_n\omega_n$ are associated by a unit of positive 
norm.
Define the representation function $R:\ZZ \to \ZZ_{\geq 0}$ by setting
\begin{equation} \label{eq:def-R}
R(m)=
\1_{m \not= 0} \cdot~
\#\left\{\mathbf{x} \in \ZZ^{n}\cap\mathfrak{D}_{+}:
\begin{array}{l}
\bN_{K}(\x)=m
\end{array}
\right\},
\end{equation}
for any $m \in \ZZ$.
This is a special case of the representation functions considered in
\cite[Definition 5.1]{bm}.
Here, we will be interested in the following restrictions of $R$.
\begin{definition}
Let $R$ be the function defined in \eqref{eq:def-R}, and let $S$ be 
a finite set of primes.
Then we let $R^*_{S}:\ZZ \to \ZZ_{\geq 0}$ denote the restriction of $R$ 
to integers $m$ that are square-free outside $S$.
That is, we define 
\begin{align*}
 R^{*}_{S}(m) = 
 \mu^2\bigg(\prod_{p \not\in S } p^{v_p(m)}\bigg) R(m)
\end{align*}
for $m \in \ZZ$.
\end{definition}
\begin{rem*}
The representation function $R$ is in general not multiplicative, not even away 
from $S$.
For this reason, inclusion-exclusion arguments cannot be carried out in a 
straightforward way, and therefore the main results of this paper do not follow 
directly from those in \cite{bm}.
\end{rem*}

Finally, we define, as in \cite[\S4]{bm}, a local count of representations 
by setting  
$$
\rho(q,A)= 
\left\{
\mathbf{x} \in (\ZZ/q\ZZ)^{n}:
\begin{array}{l}
\bN_{K}(\x)\equiv A \Mod{q} 
\end{array}
\right\}
$$
for any $q\in \NN$ and $A \in \ZZ/q\ZZ$.

With this notation, the following result is the asymptotic result for $R^*_S$ 
that corresponds to \cite[Theorem 5.2]{bm}.
\begin{theorem}\label{t:NB}
Let $K_1, \dots, K_r$ be finite extensions of $\QQ$ of degree at least $2$ and 
let $n_1, \dots, n_r$ denote their respective degrees.
Let $S_1, \dots, S_r$ be finite sets of primes. 
For each $i \in \{1, \dots, r\}$, let $R_i$ be the representation function of a 
norm form associated to $K_i/\QQ$, and let $R^*_i := {R^*_{i}}_{S_i}$ denote its 
restriction to integers that are square-free outside $S_i$. 
Let $\mathfrak{K} \subset [-1,1]^s$ be a convex body.
Further, suppose that $f_1, \dots, f_r \in \ZZ[u_1,\dots,u_s]$ are 
pairwise non-proportional linear forms and assume that 
$|f_i(\mathfrak{K})|\leq 1$, for $1\leq i\leq r$.
Given any modulus $q \in \NN$ and a vector $\a \in (\ZZ/q\ZZ)^s$ such that
$v_p(f_i(\a)) < v_p(q)$ for all $p \in S_i$ and 
$i \in \{1, \dots, r\}$, we then have
$$
\sum_{\substack{\u \in \ZZ^s \cap T \mathfrak{K}\\ \u \equiv \a \Mod{q}}}
\prod_{i=1}^r R^*_i(f_i(\u))
= \beta_\infty \prod_p \beta_p \cdot T^s+o(T^{s}), 
\quad (T\rightarrow \infty),
$$
where
$$ 
\beta_{\infty}
=\sum_{\beps \in \{\pm\}^{r}}
 \vol\left(
  \mathfrak{K} \cap 
  \mathbf{f}^{-1}(\RR_{\epsilon_1} \times\dots\times \RR_{\epsilon_r})
 \right)
\prod_{i=1}^r \kappa_i^{\epsilon_i}
$$
with 
$$
\kappa_i^{\epsilon_i}
= \vol
 \{\x \in \mathfrak{D}^+_i : 0< \epsilon_i \bN_{K_i}(\x) \leq 1 \}
\quad \text{and} \quad  \mathbf{f} = (f_1, \dots, f_r): \ZZ^s \to \ZZ^r,
$$
and 
$$
\beta_p=
 \lim_{m\rightarrow \infty} 
 \frac{1}{p^{ms}}\sum_{
 \substack{
 \u\in (\ZZ/p^m\ZZ)^s\\
 \u\equiv\a\Mod{p^{v_p(q)}}}}
 \prod_{i=1}^r
 \bigg( 1 - \1_{p \not\in S_i} \frac{\rho_i(p^2,0)}{p^{2n_i}} \bigg)
 \frac{\rho_i(p^m,f_i(\u))}{p^{m(n_i-1)}},
$$
for each prime $p$.
Furthermore, the product $\prod_p \beta_p$ is absolutely convergent.
\end{theorem}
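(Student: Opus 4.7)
The plan is to follow the nilsequence framework that yielded \cite[Theorem 5.2]{bm}, with the essential new input being the non-correlation estimate announced in the abstract: after a suitable $W$-tricked normalization, $R^*_i$ has negligible inner product with every nilsequence of bounded complexity. Granting this (which would be the main separate theorem of the paper), the deduction begins with a $W$-trick. Pick a slowly growing $w = w(T)$ and set $W = \lcm(q, \prod_{p \leq w} p^{k_p})$ for suitably large $k_p$. Split the congruence $\u \equiv \a \Mod{q}$ according to residues $\bv$ of $\u$ modulo $W$ (so that in particular each $S_i$ is controlled), and within each class normalize by the expected $p$-adic density $\alpha_i(W, \bv)$ of $R^*_i \circ f_i$, so that $g_i(\u) := R^*_i(f_i(\u))/\alpha_i(W,\bv)$ averages to $1+o(1)$. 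Aggregating the $\alpha_i$ over residue classes $\bv$ and letting $w\to\infty$ will reproduce $\prod_p \beta_p$ restricted to the class $\a$ modulo $q$.

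Next, construct a pseudorandom majorant $\nu_i$ for $g_i$ from a truncated divisor sum, satisfying the linear-forms and correlation conditions required by the multidimensional generalized von Neumann theorem of Green--Tao--Ziegler. The arithmetic regularity lemma then decomposes
$$
g_i = g_i^{\mathrm{nil}} + g_i^{\mathrm{sml}} + g_i^{\mathrm{unf}},
$$
where $g_i^{\mathrm{nil}}$ is a bounded-complexity nilsequence, $g_i^{\mathrm{sml}}$ is $L^2$-small, and $g_i^{\mathrm{unf}}$ has negligible $U^{s+1}$-norm. The non-correlation hypothesis forces $g_i^{\mathrm{nil}}$ to equal the constant $1$ up to an $o(1)$ error; the $L^2$-small piece is handled by Cauchy--Schwarz against $\nu_i$; and the Gowers-uniform piece is killed by the generalized von Neumann theorem, using that pairwise non-proportionality of $f_1, \dots, f_r$ keeps the complexity of the linear system bounded.

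After these reductions the surviving expression is $\prod_i \alpha_i(W,\bv)$ times the count of $\u \in T\mathfrak{K}$ lying in the progression $\bv$ modulo $W$. Partitioning $T\mathfrak{K}$ according to the sign vector $\beps \in \{\pm\}^r$ of $(f_1(\u), \dots, f_r(\u))$ and inserting the sign-densities $\kappa_i^{\epsilon_i}$ (which describe the asymptotic distribution of integers represented by $\nf_{K_i}$) produces $\beta_\infty \cdot T^s$. Summing over residue classes $\bv$ and letting $w \to \infty$ converts the product of local densities into $\prod_p \beta_p$; absolute convergence of this product is standard, combining the usual bound $\rho_i(p^m, A) \ll p^{m(n_i-1)}$ with the factor $(1 - \rho_i(p^2,0)/p^{2n_i})$, which saves a factor $p^{-2}$ at primes split in $K_i$.

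The main obstacle will be the non-correlation of $R^*_i$ with nilsequences. The factor $\mu^2$ destroys the approximate multiplicativity of $R_i$ away from $S_i$ that was exploited in \cite{bm}, and it cannot be removed by naive inclusion--exclusion because $R_i$ itself is not multiplicative. The natural strategy is to write $\mu^2(m) = \sum_{d^2 \mid m} \mu(d)$ and split the $d$-range: the short range can be absorbed into a refined $W$-trick, reducing directly to the non-correlation result of \cite{bm}; the long range will require a genuinely new equidistribution argument on the nilmanifold supporting the given nilsequence, exploiting residual cancellation of $R_i$ along the progressions $m \equiv 0 \Mod{d^2}$.
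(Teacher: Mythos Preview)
Your overall architecture matches the paper's: reduce Theorem~\ref{t:NB} to \cite[Theorem 5.2]{bm} by supplying (i) pseudorandom majorants (reused verbatim from \cite{bm}) and (ii) a non-correlation estimate for the $W$-tricked $R^*_i$ against nilsequences, with absolute convergence of $\prod_p\beta_p$ handled separately via $\beta_p=1+O(p^{-2})$. So far so good.

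The description of the non-correlation proof, however, has the two ranges of $d$ in the decomposition $\mu^2(m)=\sum_{d^2\mid m}\mu(d)$ reversed. In the paper it is the \emph{large} $d$ (say $d>N^{1/C_0}$) that are disposed of trivially, using only the pointwise bound $R(m)\ll\tau(m)^n$ to show $\sum_{d>N^{1/C_0}}\sum_{d^2\mid m\leq N}R(m)\ll N^{1-c}$; no cancellation or equidistribution enters here. The genuinely new work sits entirely in the range $d\leq N^{1/C_0}$ with $\gcd(d,W)=1$. Your suggestion to ``absorb the short range into a refined $W$-trick, reducing directly to \cite{bm}'' does not work: since every such $d$ has all prime factors $>w(T)$, the modulus $Wd^2$ is not $w(T)$-smooth, and \cite[Proposition 6.4]{bm} is stated only for $w(T)$-smooth moduli. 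What the paper does instead is keep $d$ as a free parameter and prove new equidistribution lemmas (for the multiparameter sequence $g\circ P$ restricted to sublattices $d^2\ZZ^n+\x_d$) showing that for all but a $\delta$-proportion of $d$ in each dyadic block $[K,2K)$ the scaled sequence $(g(d^2\bn+\x_d)\Gamma)$ remains totally equidistributed; the exceptional $d$ are then controlled by the crude bound $\rho(d^2,0)/d^{2n}\ll d^{\eps-2}$. This ``almost all $d$'' equidistribution step is the heart of the argument and is neither a $W$-trick refinement nor a cancellation phenomenon for $R$.
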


\begin{rem*}
In all of this work, $R^*_S$ could be replaced by the more general 
representation function that arises from replacing $R$ by a function 
$R(\mathfrak{X},M,\b)$ as defined in \cite[Definition 5.1]{bm}.
This would allow one to prove a weak approximation result that does not only 
take the variables $\u$ from \eqref{eq:system-sqf} into account, but also the 
variables $\x_i$.
While working with a general function $R(\mathfrak{X},M,\b)$ requires essentially 
no additional work, we decided to restrict ourselves here to the special case of 
\eqref{eq:def-R} for reasons of notational simplicity.
\end{rem*}

The proof of \cite[Theorem 5.2]{bm} uses the methods that were 
introduced in Green--Tao \cite{GT-linearprimes}, and as such the two main steps 
in the proof are the construction of a family of pseudo-random majorants for a 
$W$-tricked version of the function $R$ and the proof that this new function is 
orthogonal to nilsequences.
The pseudo-random majorants constructed in \cite{bm} for the $R$-function are in 
fact pseudo-random majorants for the $W$-tricked version of $R^*_S$ as well. 
Thus, the main step that is missing is to check orthogonality with nilsequences.

We prove the convergence of the product of local factors in Section \ref{s:local}.
As all remaining parts work exactly as in \cite{bm}, our main focus here is to 
establish a non-correlation estimate (Theorem \ref{p:nilsequences} 
below) for $R^*_S$ that corresponds to \cite[Proposition 6.3]{bm} with $R$ 
replaced 
by its square-free version.
Both, Section \ref{s:progressions} and Section \ref{s:technical} contain 
technical lemmas needed in the proof of Theorem \ref{p:nilsequences} in 
Section \ref{s:proof}.

The statement of Theorem \ref{p:nilsequences} requires a `$W$-trick'.
In contrast to the case of $R$ handled in \cite{bm}, there is a lot of 
flexibility in the choice of the $W$-trick here, as the exceptionally large values 
of $R$, which were problematic before, occur at integers that are not 
square-free. 
Given any integer $N>0$, let $w(N)= \log \log N$ and set 
\begin{equation} \label{def:W}
W(N)
= \prod_{p \leq w(N)} p^{\alpha(p)}, 
\end{equation}
where $\alpha(p) \in \NN$ is such that
$$
p^{\alpha(p)-1} < \log N \leq p^{\alpha(p)}.
$$
Observe that $\alpha(p)>1$.
Taking $N$ sufficiently large allows us to assume that any given finite set $S$ 
of primes is contained in the set of primes less than $w(N)$ and 
moreover that $q| W(N)$ for any given integer $q$. 
Given a representation function $R^*_S$ and any integer $T>0$, we define the 
following set of `unexceptional' residues:
\begin{equation}\label{eq:def-A}
\cA(R^*_S, N)=\left\{A \bmod{W(N)}: 
\begin{array}{ll}
 0 \leq v_p(A) \leq 1 &\text{if }p<w(N), p\not\in S \cr
 0 \leq v_p(A) < v_p(W(N))/3 &\text{if } p\in S \cr
 \rho(W(N),A) >0
\end{array}
\right\}. 
\end{equation}
With the exception of integers that are divisible to a large order by some 
prime $p$ from $S$, the support of $R^*_S$ is contained in the set of numbers 
whose residues modulo $W(N)$ belong to $\cA(R^*_S, N)$.
Since $S$ is finite, we can avoid the exceptional set by fixing 
the $S$-part $\prod_{p\in S} m^{v_p(m)}$ of integers $m$ under consideration and 
taking $N$ sufficiently large so that $v_p(m) < v_p(W(N))/3$.

With the $W$-trick in place, we are now ready to reveal the main result of this 
paper, which states that the $W$-tricked version of $R^*_S$ is orthogonal to 
nilsequences.
\begin{theorem}\label{p:nilsequences}
 Let $G/\Gamma$ be a nilmanifold of dimension $m_G \geq 1$, let
$G_{\bullet}$ be a filtration of $G$ of degree $\ell \geq 1$, and let
$g \in \mathrm{poly}(\ZZ,G_{\bullet})$ be a polynomial sequence.
Suppose that $G/\Gamma$ has a $Q$-rational Mal'cev basis $\mathcal X$
for some $Q \geq 2$, defining a metric $d_{\mathcal X}$ on $G/\Gamma$.
Suppose that $F: G/\Gamma \to [-1,1]$ is a Lipschitz function.
Let $N$ and $T=T(N)$ be positive integer parameters that satisfy 
$N^{1 - \eps} \ll_{\eps} T \leq N$ for all $\eps > 0$.
Then for $\epsilon\in \{\pm\}$, 
$W=W(N)$, and $A \in \ZZ$ with 
$A \Mod{W} \in \cA(R^*_S, N)$ and $0 \leq \epsilon A < W$,
we have the estimate
\begin{align*}
\bigg| \frac{W}{T}
  \sum_{0< \epsilon m \leq T/W} 
  \bigg(&
    R^*_S(W m +A) -  \kappa^{\epsilon}
    \frac{\rho(W,A)}{W^{n-1}}
    \prod_{p>w(N)} 
    \bigg( 1 -
    \frac{\rho(p^2,0)}{p^{2n}} \bigg)
  \bigg)
  F(g(|m|)\Gamma)
\bigg| \\
&\ll_{m_G,\ell,E}
  Q^{O_{m_G,\ell,E}(1)}
 \frac{1+ \|F\|_{\mathrm{Lip}}}{(\log \log \log N)^{E}}
 \frac{\rho(W,A)}{W^{n-1}}
 \prod_{p>w(N)} \bigg( 1 - \frac{\rho(p^2,0)}{p^{2n}} \bigg),
\end{align*}
for any $E>0$ and provided $N$ is sufficiently large.
\end{theorem}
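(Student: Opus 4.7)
The strategy is to reduce to the non-correlation estimate \cite[Proposition~6.3]{bm} for the unrestricted representation function $R$ by M\"obius inversion on the square-free indicator. Writing $P_S = \prod_{p \in S} p$, one has
\[
R^*_S(n) \;=\; R(n)\sum_{\substack{d^2\mid n \\ (d,\, P_S)=1}}\mu(d),
\]
which, inserted into the sum in the theorem and exchanged with the sum over $m$, produces a sum indexed by $d$. I split this at a parameter $D = D(T)$ (to be chosen polynomial in $\log\log\log T$). The key preliminary observation is that if a prime $p\mid d$ satisfies $p\leq w(T)$ and $p\notin S$, then $d^2\mid Wm+A$ would force $v_p(A)\geq 2$, contradicting the condition defining $\cA(R^*_S,T)$. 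Thus only $d$ coprime to $W$ contribute.

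For each such $d\leq D$, the congruence $d^2\mid Wm+A$ selects a subprogression $m = m_0 + d^2 m'$ of modulus $Wd^2$; the pull-back $\tilde g(m') := g(|m_0+d^2m'|)$ is again a polynomial sequence in $\mathrm{poly}(\ZZ,G_\bullet)$, whose Mal'cev-basis complexity is controlled by $Q\cdot d^{O(1)}$. After checking that the shifted residue $A_d$ (defined by $A_d\equiv A\pmod W$, $A_d\equiv 0\pmod{d^2}$) lies in the unrestricted analogue of $\cA$, applying \cite[Proposition~6.3]{bm} to this subprogression and using the factorisation $\rho(Wd^2,A_d) = \rho(W,A)\rho(d^2,0)$ yields
\[
\frac{1}{T'}\!\!\sum_{\substack{0<\epsilon m\leq T'\\ d^2\mid Wm+A}}\!\!\! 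R(Wm+A)F(g(|m|)\Gamma)
\;=\; \kappa^{\epsilon}\,\frac{\rho(W,A)\,\rho(d^2,0)}{W^{n-1}\,d^{2n}}\,\cM_d(F) + E_d,
\]
where $\cM_d(F)$ denotes the average of $F(g(|m|)\Gamma)$ on the subprogression, and $|E_d| \ll Q^{O(1)}\,d^{O(1)}\,(1+\|F\|_{\mathrm{Lip}})\,(\log\log\log T)^{-E'}\cdot\rho(W,A)\rho(d^2,0)/(W^{n-1}d^{2n})$ for any fixed $E'>0$.

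The tail $d>D$ will be controlled by a mean-value estimate for $\sum_{n\leq X,\, d^2\mid n}R(n)$: viewing $R(n)$ as a lattice-point count gives the bound $\ll X\cdot\rho(d^2,0)/d^{2n}\ll X\cdot\tau(d)^{O(1)}/d^{2}$, so after summing over $d>D$ one obtains a tail contribution $\ll (\log D)^{O(1)}/D$ times the overall normalisation. Such upper bounds for $R$ in arithmetic progressions are precisely the purpose of the technical lemmas of Sections~\ref{s:progressions} and~\ref{s:technical}. Summing the small-$d$ main terms with the weights $\mu(d)$ and invoking the equidistribution of $g$ to replace each $\cM_d(F)$ by the full-progression average (with an error absorbed in the Proposition 6.3 error), I obtain
\[
\kappa^{\epsilon}\,\frac{\rho(W,A)}{W^{n-1}}\!\sum_{\substack{d\leq D\\ (d,W)=1}}\!\!\mu(d)\,\frac{\rho(d^2,0)}{d^{2n}}\cdot\frac{1}{T'}\sum_{m} F(g(|m|)\Gamma);
\]
multiplicativity of $d\mapsto\rho(d^2,0)$ on integers coprime to $W$ then extends the truncated Dirichlet series to the Euler product $\prod_{p>w(T)}\bigl(1-\rho(p^2,0)/p^{2n}\bigr)$ at cost $O(1/D)$, producing the main term in the target estimate.

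The chief obstacle will be balancing three competing constraints on $D$: the tail sum must fall below $(\log\log\log T)^{-E}$ times the main-term normalisation, the cumulative $d^{O(1)}$ losses in the small-$d$ errors (together with the Euler-product truncation) must not spoil the quality $(\log\log\log T)^{-E}$, and \cite[Proposition~6.3]{bm} must be applied uniformly in $d\leq D$. I expect these to match comfortably when $D$ is a suitable polynomial in $\log\log\log T$ and $E'$ is taken slightly larger than $E$; modulo this bookkeeping, the proof reduces in a fairly mechanical way to the unrestricted non-correlation estimate together with the progression and divisor estimates developed in the preceding technical sections.
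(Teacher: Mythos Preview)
Your approach inverts the order of the two main reductions relative to the paper, and this creates a genuine gap. The paper first applies the factorisation theorem for nilsequences \emph{once} to the original sequence $g$, reducing Theorem~\ref{p:nilsequences} to an equidistributed statement (the Proposition in Section~\ref{s:proof}); only \emph{inside} that equidistributed statement does it carry out the M\"obius decomposition over $d$. You instead perform the M\"obius step first and then try to invoke \cite[Proposition~6.3]{bm} as a black box for each $d$. Two things go wrong.

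First, your claim that one may ``replace each $\cM_d(F)$ by the full-progression average by invoking the equidistribution of $g$'' is not available: in Theorem~\ref{p:nilsequences} the sequence $g$ is an \emph{arbitrary} element of $\mathrm{poly}(\ZZ,G_\bullet)$, not an equidistributed one, so there is no reason for the average of $F(g(\cdot)\Gamma)$ over a $d^2$-subprogression to agree with the full average. The equidistribution you need lives inside the factorisation theorem that is applied within \cite[Proposition~6.3]{bm}, and you do not have access to it from the outside; moreover the factorisation would be different for each $d$, so the resulting main terms do not combine. Second, \cite[Proposition~6.3]{bm} is formulated for the modulus $W(T)$ with residues in the admissible set $\cA$; your modulus $Wd^2$ involves primes $p>w(T)$, and at such $p\mid d$ one has $v_p(A_d)\ge 2 = v_p(Wd^2)$, so $A_d$ does not lie in any reasonable analogue of $\cA$ and the black-box application is not justified. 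The paper's own remark after the definition of $R^*_S$ flags exactly this: because $R$ is not multiplicative, the inclusion--exclusion cannot be combined with \cite{bm} in a straightforward way.

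What the paper does instead is, after reducing to an equidistributed $g$, open up $R$ as a lattice-point count and pass to a multiparameter nilsequence $g\circ\bN_K$; the new technical lemmas in Section~\ref{s:technical} (in particular Lemma~\ref{l:d-scaling}) then show that for \emph{most} $d$ in each dyadic range the $d^2$-rescaled multiparameter sequence remains equidistributed, while the exceptional $d$ are handled trivially. You have misread the role of Section~\ref{s:technical}: those lemmas are not ``upper bounds for $R$ in arithmetic progressions'' but equidistribution-preservation results for nilsequence subsequences, and they are precisely what substitutes for the unavailable black-box step in your argument.
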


\begin{grem}
We assume familiarity with \cite{bm} throughout this paper.
The ideas and proofs that we present in Sections \ref{s:local} and \ref{s:proof} 
are very closely related to the material from \cite{bm}.
The main new observation is the fact that these ideas can be made to work 
in the case of the square-free representation function by means of the new 
technical lemmas we prove in Sections \ref{s:progressions} and \ref{s:technical}.
\end{grem}

\begin{ack}
 This paper owes its existence to questions from Yonatan Harpaz and Olivier 
Wittenberg. I am grateful to both of them and to Tim Browning for their comments 
on an earlier version of this paper, to Alexei Skorobogatov for initiating 
the discussions and to the referee for valuable suggestions and comments that 
helped improve the paper.
\end{ack}

\section{Local factors and the deduction of Theorem \ref{t:WA}} \label{s:local}

The aim of this section is to deduce Theorem \ref{t:WA} from Theorem \ref{t:NB}.
This deduction partially relies on the following proposition, which 
asymptotically evaluates the local factors from Theorem \ref{t:NB}.
Note that this proposition implies the final part of Theorem \ref{t:NB}, namely 
that the product of local factors is absolutely convergent. 

\begin{proposition} \label{p:beta}
Let 
$L = \max_{1 \leq i \leq r}\{\|f_i\|,s,r,n_i,|D_{K_i}|\},$ where 
$\|f_i\|$ denotes the maximum of the absolute values of the coefficients of $f_i$.
Then the local factors from Theorem \ref{t:NB} satisfy 
\begin{enumerate}
 \item $\beta_p= 1 + O_L(p^{-2})$ whenever $p \nmid q$, and
 \item $\beta_p= O_{L,q}(1)$ at all primes $p$.
\end{enumerate}
In particular, there is $L'=O_L(1)$ such that $\beta_p > 0$ provided $p \nmid q$ 
and $p \geq L'$.
\end{proposition}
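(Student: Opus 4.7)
The key observation is that the square-free correction factor $(1 - \1_{p \not\in S_i}\rho_i(p^2,0)/p^{2n_i})$ does not depend on $\u$, so it may be pulled outside the sum and we may write
\[
\beta_p
=
\prod_{i=1}^{r}
\bigg(
  1 - \1_{p \not\in S_i}\frac{\rho_i(p^2,0)}{p^{2n_i}}
\bigg)
\cdot \tilde\beta_p,
\]
where $\tilde\beta_p$ is precisely the local factor appearing in \cite[Theorem 5.2]{bm}. The analogous estimates $\tilde\beta_p = 1 + O_L(p^{-2})$ for $p \nmid q$ and $\tilde\beta_p = O_{L,q}(1)$ at all primes are established (or are immediate consequences of bounds established) in \cite{bm}, and I would invoke them directly.

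It therefore suffices to show
\[
\frac{\rho_i(p^2,0)}{p^{2n_i}} = O_L(p^{-2})
\quad \text{uniformly in } p,
\]
since combined with the factorisation above this yields (1), while for (2) the trivial estimate $|1 - \1_{p \not\in S_i}\rho_i(p^2,0)/p^{2n_i}| \leq 1$ suffices. I would handle the density bound in two cases. For primes $p$ unramified in $K_i$, i.e.\ $p \nmid D_{K_i}$, the Chinese Remainder Theorem identifies $\fo_{K_i}/p^2\fo_{K_i}$ with $\prod_j \fo_{K_i}/\fp_j^2$, where $p\fo_{K_i} = \fp_1 \cdots \fp_g$ with residue degrees $f_1, \dots, f_g$. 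The condition $p^2 \mid N_{K_i/\QQ}(\alpha)$ becomes $\sum_j f_j v_{\fp_j}(\alpha) \geq 2$, and inclusion-exclusion over the three contributing configurations (some $v_{\fp_j}(\alpha) \geq 2$; two distinct $\fp_j, \fp_k$ both dividing $\alpha$; or some $v_{\fp_j}(\alpha) \geq 1$ with $f_j \geq 2$) gives in each case a density of size $O_{n_i}(p^{-2})$.

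For the remaining ramified primes $p \mid D_{K_i}$, of which there are $O_L(1)$, the trivial bound $\rho_i(p^2,0)/p^{2n_i} \leq 1$ is absorbed into $O_L(p^{-2})$ because such $p$ are bounded in terms of $L$. Combining these estimates with the bounds on $\tilde\beta_p$ gives both (1) and (2). Absolute convergence of $\prod_p \beta_p$ and the positivity statement $\beta_p > 0$ for $p \nmid q$ and $p \geq L'$ follow immediately from the asymptotic (1), with $L'=O_L(1)$ chosen so that the error term in (1) is strictly smaller than $1$. The main obstacle is the density estimate $\rho_i(p^2,0)/p^{2n_i} = O_L(p^{-2})$ in the unramified case, but since the factorisation structure of the norm form modulo $p$ is transparent, this is a systematic counting exercise rather than a conceptual difficulty.
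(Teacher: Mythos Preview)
Your proposal is correct and follows essentially the same approach as the paper: both factor $\beta_p$ as the product of the square-free correction factors and the local factor $\tilde\beta_p = \beta'_p$ from \cite[Theorem 5.2]{bm}, invoke \cite{bm} for the estimates on the latter, and reduce to showing $\rho_i(p^2,0)/p^{2n_i} = O_L(p^{-2})$. The only difference is in how this last density bound is obtained: the paper simply cites \cite[Lemma 4.5]{bm}, which gives the uniform estimate $\rho_i(p^2,0)/p^{2(n_i-1)} \leq C\,2^{n_i}$ valid at \emph{all} primes, whereas you argue directly by splitting into unramified primes (explicit counting via the factorisation of $p\fo_{K_i}$) and ramified primes (trivial bound plus $p \leq |D_{K_i}| \leq L$). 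Both routes work; the paper's is shorter by appeal to an existing lemma, while yours is self-contained.
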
 
\begin{proof}
For every prime $p$ let
$$
\beta'_p =
 \lim_{m\rightarrow \infty} 
 \frac{1}{p^{ms}}\sum_{
 \substack{
 \u\in (\ZZ/p^m\ZZ)^s\\
 \u\equiv\a\Mod{p^{v_p(q)}}}}
 \prod_{i=1}^r
 \frac{\rho_i(p^m,f_i(\u))}{p^{m(n_i-1)}}.
$$
Then
$$
\beta_p
=\beta'_p
 \prod_{i=1}^r
 \bigg( 1 - \1_{p \not\in S_i} \frac{\rho_i(p^2,0)}{p^{2n_i}} \bigg)
$$
and $\beta'_p$ is the local factor that appears in \cite[Theorem 5.2]{bm}, but 
with $\rho_i(p^m,f_i(\u);p^{v_p(q)})$ replaced by $\rho_i(p^m,f_i(\u))$.
The proof of \cite[Proposition 5.5]{bm} implies that
$\beta'_p = O_{L,q}(1)$ and that 
$\beta'_p= 1 + O_L(p^{-2})$ whenever $p \nmid q$.
Indeed, the second part of \cite[Proposition 5.5]{bm} rests on 
the bound $\rho_i(p^m,f_i(\u);p^{v_p(q)}) \leq \rho_i(p^m,f_i(\u))$ and 
therefore includes a proof of the assertion (1);
assertion (2) follows from a direct application of \cite[Proposition 5.5]{bm} 
with $M=q$ since 
$\rho_i(p^m,f_i(\u);p^{v_p(q)})= \rho_i(p^m,f_i(\u))$ when $p \nmid q$.
Thus, it remains to show that
$$
 1 - \frac{\rho_i(p^2,0)}{p^{2n_i}}
= 1 + O(p^{-2})
$$
for all primes $p$ and $i \in \{1, \dots, r\}$.
By \cite[Lemma 4.5]{bm}, there is $C>0$ such that
$$
\frac{\rho_i(p^2,0)}{p^{2(n_i-1)}} \leq C 2^{n_i},
$$
that is 
\begin{equation}\label{eq:rho-p^2-0}
\frac{\rho_i(p^2,0)}{p^{2n_i}} \leq C \frac{2^{n_i}}{p^2}, 
\end{equation}
for each $1 \leq i \leq r$.
\end{proof}

We are now in the position to prove Theorem \ref{t:WA}.
The proof below is similar to that given in \cite[\S5.3]{bm}, but significantly 
easier, since we only consider weak approximation in the variables $\u$ and not 
in the variables $\x_i$.  

\begin{proof}[Proof of Theorem \ref{t:WA} assuming Theorem \ref{t:NB}]
 First of all, we may assume that $S$ contains all primes $p \leq L'$, where $L'$ 
is given by Proposition \ref{p:beta} above.
For $1 \leq i \leq r$ let $R_i$ be the representation function of some norm form 
$\nf_{K_i}$ and let $R^*_i$ be its restriction to integers $m$ that are 
square-free outside $S$.
Then it suffices to show that, given any $\eps> 0$, there exists a vector 
$\u' \in \ZZ^s$ such that
\begin{enumerate}
 \item $\quad |\u' - \u|_p < \eps \quad$ for all  $p\in S$,
 \item $\quad |t\u' - \u|  < \eps \quad$ for some $t>0$, and
 \item $\quad \prod_{i=1}^r R^*_i(f_i(\u'))>0.$
\end{enumerate}
For every $\eps>0$ there is an integer $Q$ composed of primes from $S$ such that
condition (1) is implied by the congruence
$$
\u' \equiv \u \Mod{Q} 
$$
and such that $v_p(Q) > v_p(f_i(\u))$ for every $p \in S$.
Further, let 
$$\mathfrak{K}(\u; \eps) = \{\mathbf{v} \in \RR^s: |\u - \mathbf{v}|< \eps\}$$ 
and note that whenever $T>0$ and $\u' \in T\mathfrak{K}(\u; \eps)$, then the 
second condition is satisfied.
Thus it is enough to show that
$$
N(T)=
\sum_{\substack{\u' \in \ZZ^s \cap T \mathfrak{K}(\u; \eps')
      \\ \u' \equiv \u \Mod{Q} }}
\prod_{i=1}^r R^*_i(f_i(\u'))>0
$$
for some value of $T>0$.
Theorem \ref{t:NB} implies that 
$$
N(T) \geq 
T^s 
\vol(\mathfrak{K}(\u,\eps) \cap 
     \mathbf{f}^{-1}(\RR_{\epsilon_1}\times \dots \times \RR_{\epsilon_r})) 
\kappa_1^{\epsilon_1} \dots \kappa_r^{\epsilon_r}
\prod_p \beta_p 
+ o(T^s), 
$$
where $\epsilon_i= \sign{f_i(\u)}$.
Hence, the result follows by taking $T$ sufficiently large, provided we can show 
that the product of local factors on the right hand side is positive.

To see this, first note that every factor 
$\kappa_i^{\epsilon_i}
=\vol \{\x \in \mathfrak{D}^+_i : 0< \epsilon_i \bN_{K_i}(\x) \leq 1 \}
$ is positive since $f_i(\u) \not= 0$ is a local norm from $K_i$ at 
the real place. 
Indeed, there is a vector $\x_i \in \RR^{n_i} \cap \mathfrak{D}^+_i$ such that 
$f_i(\u)=\bN_{K_i}(\x_i)$
and since $\epsilon_i= \sign{f_i(\u)}$, we have
$$t\x_i \in \{\x \in \mathfrak{D}^+_i : 0< \epsilon_i \bN_{K_i}(\x) < 1 \}$$ 
for every sufficiently small $t>0$. 
By continuity of $\bN_{K_i}$, the above set is open and has positive volume since 
it is non-empty.

Since $f_i(\u) \not= 0$ for all $i \in \{1, \dots, r\}$, the open set 
$$
\mathfrak{K}(\u,\eps) \cap 
     \mathbf{f}^{-1}(\RR_{\epsilon_1}\times \dots \times \RR_{\epsilon_r}))
$$
contains $\u$, which again implies
$\vol(\mathfrak{K}(\u,\eps) \cap 
     \mathbf{f}^{-1}(\RR_{\epsilon_1}\times \dots \times \RR_{\epsilon_r}))>0$.

By Proposition \ref{p:beta}, we have 
$\prod_{p \not\in S} \beta_p > 0$, so that it remains to check that
$\beta_p>0$ whenever $p \in S$.
We proceed as in \cite[\S5.3]{bm}:

Let $p$ be any element from $S$ and recall that for $1 \leq i \leq r$ there 
is an integral element $k_i \in K_i$ such that 
$f_i(\u)=N_{K_i\otimes_{\QQ}\QQ_p/\QQ_p}(k_i)$.
This implies that for every $m>0$ there is a vector $\x_i \in \ZZ^{n_i}$ 
such that 
$$ f_i(\u) \equiv \nf_{K_i}(\x_i) \Mod{p^m}$$
and hence
$$
\prod_{i=1}^r \rho_i(p^{m}, f_i(\u))\geq 1.
$$
Choosing
$$
m=
2\bigg(1 +v_p(Q)
     + \sum_{i=1}^r v_p(f_i(\u)) 
     + \sum_{i=1}^rv_p(n_i)
 \bigg),
$$
we apply \cite[Lemma 3.4]{bm} with $A=f_i(\u)$, $G=\nf_{K_i}$ and $\ell= 0$ to 
deduce that
$$
\prod_{i=1}^r \frac{\rho_i(p^{m'}, f_i(\tilde \u))}{p^{m'(n_i-1)}}
= \prod_{i=1}^r \frac{\rho_i(p^{m}, f_i(\u))}{p^{m(n_i-1)}}
\geq \prod_{i=1}^r \frac{1}{p^{m(n_i-1)}}
$$
whenever $m'>m$ and $\tilde \u \in (\ZZ/p^{m'}\ZZ)^s$ is such that 
$\tilde \u \equiv \u \Mod{p^{m}}$.
For any given $m'>m$, there are $p^{(m'-m)s}$ admissible choices for $\tilde \u$.
Note that $m > v_p(Q)$.
Thus,
$$
\beta_p 
\geq \lim_{m' \to \infty} \frac{1}{p^{m's}} 
\sum_{\substack{ \tilde \u \in (\ZZ/p^{m'}\ZZ)^s 
                \\ \tilde \u' \equiv \u \Mod{p^{m'}}}}
\prod_{i=1}^r \frac{\rho_i(p^{m'}, f_i(\tilde \u))}{p^{m'(n_i-1)}}
\geq \frac{1}{p^{m(n_1 + \dots + n_r + s -r)}}
>0,
$$
which completes the proof. 
\end{proof}

\section{$R^*_S$ in arithmetic progressions} \label{s:progressions}

This section contains two lemmas about the mean value of $R^*_{S}$ in 
arithmetic progressions.
These will be required in the proof of Theorem \ref{p:nilsequences} in 
Section \ref{s:proof}.

\begin{lemma}\label{l:R-mean-value}
Let $S$ be a finite set of primes and let $R^*_S$ be the corresponding 
restriction of the representation function. 
Let $N$ and $q$ be positive integers such that $p|q$ for every prime $p<w(N)$ and 
such that $v_p(q)\not=1$ for all $p \not\in S$. 
Suppose further that $A \in \{1,\dots,q \}$ is an integer such that
$0 \leq v_p(A) \leq 1$ whenever $p|q$ and $p \not\in S$.
Let $\epsilon \in \{\pm\}$ and let $\kappa^\epsilon$ be the constant that 
appears in \cite[Lemma 6.1]{bm}.
Then, provided $N$ is sufficiently large,
\begin{equation} \label{eq:R'-sum}
\sum_{\substack{0< \epsilon m \leq x \\ m \equiv A \Mod{q}}} R^*_S(m)
= 
 \kappa^\epsilon x \frac{\rho(q,A)}{q^{n}}
 \prod_{p\nmid q} \bigg(1-\frac{\rho(p^2,0)}{p^{2n}}\bigg)
 + O(qx^{1-\frac{1}{20n}}).
\end{equation} 
\end{lemma}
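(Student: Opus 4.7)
The plan is to deduce the stated asymptotic for $R^*_S$ from the analogous asymptotic for the unrestricted function $R$ provided by \cite[Lemma 6.1]{bm} by M\"obius-inverting the square-free condition. Starting from
$$
\mu^2\bigg(\prod_{p \notin S} p^{v_p(m)}\bigg) = \sum_{\substack{d^2 \mid m \\ (d,\,\prod_{p \in S}p)=1}} \mu(d),
$$
swapping the order of summation rewrites the left-hand side of \eqref{eq:R'-sum} as
$$
\sum_{(d,\,\prod_{p \in S}p)=1} \mu(d) \sum_{\substack{0 < \epsilon m \leq x \\ m \equiv A \Mod{q} \\ d^2 \mid m}} R(m).
$$

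The first task is to argue that the outer sum is effectively restricted to $d$ coprime to $q$. Indeed, if some prime $p \notin S$ divides both $d$ and $q$, then $v_p(q) \neq 1$ forces $v_p(q) \geq 2$, and combined with $v_p(A) \leq 1$ the congruence $m \equiv A \Mod{q}$ forces $v_p(m) = v_p(A) \leq 1$, contradicting $p^2 \mid m$; the inner sum therefore vanishes. Since for $N$ sufficiently large one has $S \subset \{p < w(N)\} \subset \{p : p \mid q\}$, the side condition $(d,\,\prod_{p \in S} p)=1$ is automatic on the surviving terms.

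For $(d,q)=1$ the Chinese Remainder Theorem combines the congruences $m \equiv A \Mod{q}$ and $m \equiv 0 \Mod{d^2}$ into a single congruence modulo $qd^2$, and the multiplicativity of the local densities in coprime moduli yields $\rho(qd^2,\cdot) = \rho(q,A)\,\rho(d^2,0)$. We would then invoke \cite[Lemma 6.1]{bm} on each inner sum for $d$ below some threshold $D$ to be chosen, producing the main-term contribution
$$
\kappa^{\epsilon} x \frac{\rho(q,A)}{q^n} \sum_{\substack{(d,q)=1,\; d \leq D \\ d \text{ square-free}}} \mu(d) \frac{\rho(d^2,0)}{d^{2n}}.
$$
Extending the summation to all square-free $d$ coprime to $q$ introduces only a convergent tail by \eqref{eq:rho-p^2-0}, and Euler-factoring the completed sum produces $\prod_{p \nmid q}(1 - \rho(p^2,0)/p^{2n})$, in agreement with the claimed main term.

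The remaining work is the error estimate. For $d > D$, a divisor-type upper bound $R(m) \ll_{\eps} m^{\eps}$ controls the tail via $\sum_{d > D} \sum_{d^2 \mid m,\, 0 < \epsilon m \leq x} R(m) \ll x^{1+\eps}/D$, while the accumulated error from Lemma~6.1 over $d \leq D$ contributes at most a polynomial in $D$ and $q$ times the corresponding error term there. The main obstacle, and essentially the only non-routine step, is to balance these two sources of error by choosing $D$ as a suitable small positive power of $x$ so that the combined bound becomes $O(qx^{1-1/(20n)})$; this amounts to careful bookkeeping of the $q$- and $d$-dependence in \cite[Lemma 6.1]{bm} under the modulus change $q \mapsto qd^2$, and is purely computational given the estimates already available in \cite{bm}.
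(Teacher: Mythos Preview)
Your approach is correct and follows essentially the same route as the paper: M\"obius-invert the square-free condition, reduce to $(d,q)=1$, apply \cite[Lemma~6.1]{bm} for small $d$, extend the main-term $d$-sum via the bound on $\rho(d^2,0)/d^{2n}$, and control the large-$d$ tail. Your justification for the reduction to $(d,q)=1$ is more explicit than the paper's (which simply asserts the identity $R^*_S(m)=\sum_{d^2\mid m,\,(d,q)=1}\mu(d)R(m)$ for $m\equiv A\Mod q$), and your tail estimate via $R(m)\ll_\eps m^\eps$ is in fact simpler than the paper's, which instead factors out the $d$-part of $m$ and uses $R(m)\ll\tau(m)^n$ together with a geometric sum over the exponents at each prime of $d$; both arguments lead to the choice $D=x^{1/(10n)}$ and the stated error.
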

\begin{proof}
We shall deduce this result from \cite[Lemma 6.1]{bm}, which states that 
\begin{equation} \label{eq:R-sum}
\sum_{\substack{0< \epsilon m \leq x \\ m \equiv A' \Mod{q'}}} R(m)
= \frac{\rho(q',A')}{{q'}^{n}} \kappa^\epsilon x
 + O(q'x^{1-1/n}),
\end{equation} 
for any positive integer $q'$, any $A' \in \ZZ$ and 
$\epsilon \in \{\pm\}$.
Since
\begin{align*}
 R^*_S(m) = \sum_{\substack{d^2 | m \\ \gcd(d,q)=1}} \mu(d) R(m)
\end{align*}
for $m \equiv A \Mod{q}$, we have
\begin{align} \label{eq:R'-main-dec}
\sum_{\substack{0< \epsilon m \leq x \\ m \equiv A \Mod{q}}} R^*_S(m)
&= 
\sum_{\substack{d \leq x^{1/2}\\ \gcd(d,q)=1}}
\mu(d)
\sum_{\substack{0< \epsilon m \leq x 
               \\ m \equiv A \Mod{q} 
               \\ m \equiv 0 \Mod{d^2}}} 
R(m). 
\end{align}
If $d$ is sufficiently small, then the inner sum may be evaluated by 
means of \eqref{eq:R-sum}.
Together with the Chinese remainder theorem we obtain
\begin{align}\label{eq:R-small-d}
 \nonumber
&\sum_{\substack{d < x^{1/10n} \\ \gcd(d,q)=1}} 
  \mu(d)
 \sum_{\substack{0< \epsilon m \leq x
                 \\m \equiv A \Mod{q}
                 \\ m \equiv 0 \Mod{d^2}}} 
   R(m) \\
 \nonumber
&=   
  \sum_{\substack{d \leq x^{1/10n} \\ \gcd(d,q)=1}} 
   \left(
   \mu(d)
   \frac{\rho(d^2,0)}{d^{2n}} 
   \frac{\rho(q,A)}{{q}^{n}} 
   \kappa^\epsilon x
    + O(d^2 q x^{1-\frac{1}{n}})
   \right) \\
&=   
  \sum_{\substack{d \leq x^{1/10n} \\ \gcd(d,q)=1}}
   \mu(d)
   \frac{\rho(d^2,0)}{d^{2n}} 
   \frac{\rho(q,A)}{{q}^{n}} 
   \kappa^\epsilon x
   + O(q x^{1-\frac{1}{n} + \frac{3}{10n}}) .
\end{align}
We aim to extend the summation in $d$ to all positive integers that are 
co-prime to $q$.
By multiplicativity of $\rho$ we deduce from \eqref{eq:rho-p^2-0} that
\begin{equation}\label{eq:rho(d^2,0)-bound}
 \mu^2(d) \frac{\rho(d^2,0)}{d^{2n}} 
\leq \frac{(C2^{n})^{\omega(d)}}{d^2}
\ll_{C,n,\eps} d^{-2 + \eps}
\end{equation}
for any $\eps >0$.
Since
\begin{align*}
 \sum_{d> x^{1/10n}} d^{-2 +\eps}
 \ll x^{\frac{1}{10n}(-2 + \eps +1)}
 \ll x^{-\frac{1}{20n}}
\end{align*}
for $\eps$ sufficiently small, \eqref{eq:R-small-d} is seen to equal
\begin{align}\label{eq:1.8}
\nonumber
&
   \kappa^\epsilon x
   \frac{\rho(q,A)}{{q}^{n}}
   \Bigg(
   \sum_{\substack{d \geq 1 \\ \gcd(d,q)=1}}
   \mu(d)
   \frac{\rho(d^2,0)}{d^{2n}} 
   + O(x^{-\frac{1}{20n}})
   \Bigg)
   + O(q x^{1-\frac{1}{n} + \frac{3}{10n}}) \\
&=   
   \kappa^\epsilon x
   \frac{\rho(q,A)}{{q}^{n}} 
   \prod_{p\nmid q}
   \bigg(1- \frac{\rho(p^2,0)}{p^{2n}}\bigg) 
   + O(x^{1-\frac{1}{20n}})
   + O(q x^{1-\frac{1}{n} + \frac{3}{10n}}),
\end{align}
where we made use of the trivial bound $\rho(q,A) \ll q^n$.

In order to bound the tail of the summation in $d$ from 
\eqref{eq:R'-main-dec}, we consider a fixed square-free integer $d$ with 
$\gcd(d,q)=1$ and let $d=p_1 \dots p_k$ be its prime factorisation.
As shown in \cite[Lemma 8.1]{bm}, the representation function $R$ may be 
uniformly bounded above by a multiplicative function of constant average order. 
More precisely, we have $R(m) \ll r_K(|m|)$, for all $m \not= 0$, where $r_K$ is 
the multiplicative function describing the Dirichlet coefficients of the Dedekind 
zeta function of $K$, i.e. $\zeta_K(s)= \sum_n r_K(n)n^{-s}$. 
Invoking two basic properties of $r_K$, namely that 
$r_K(m) \ll \tau(m)^n$ and that $\sum_{m \leq x} r_K(m) \ll x$, c.f.\ 
\cite[(2.8) and (2.10)]{bm}, we deduce
\begin{align*}
  \sum_{\substack{m \equiv A \Mod{q}
                 \\ m \equiv 0 \Mod{d^2}
                 \\ 0< \epsilon m \leq x}} 
  R(m)
&\ll \sum_{(b_1,\dots,b_k) \in \NN_0^k}
  r_K(d^2 p_1^{b_1} \dots p_k^{b_k})
 \sum_{0 < m \leq x/(d^2 p_1^{b_1} \dots p_k^{b_k} )} 
  r_K(m) \\
&\ll \sum_{\substack{(b_1,\dots,b_k) \in \NN_0^k
                    \\ p_1^{b_1} \dots p_k^{b_k} \leq x}} 
 \tau(d^2 p_1^{b_1} \dots p_k^{b_k})^n
 \frac{x}{d^2 p_1^{b_1} \dots p_k^{b_k}} \\
&\ll x \prod_{i=1}^k 
 \sum_{b_i \geq 2} \frac{\tau(p_i^{b_i})^n}{p_i^{b_i}} 
 \ll x \prod_{i=1}^k 
 \sum_{b_i \geq 2} \Big(\frac{2^n}{p_i}\Big)^{b_i}
\ll x \prod_{i=1}^k \frac{2^{2n+1}}{{p_i}^2}
\ll x \frac{\tau(d^2)^{n+1}}{d^2}.
\end{align*}
Thus, for any $C_0>2$ we have
\begin{align}\label{eq:tail-est}
\nonumber
 \Bigg|
 \sum_{\substack{x^{1/C_0} \leq d \leq x^{1/2} \\ \gcd(d,q)=1}} 
\mu(d)
 \sum_{\substack{m \equiv A \Mod{q}\\ m \equiv 0 \Mod{d^2}\\ 
       0 < \epsilon m \leq x}} 
  R(m)
 \Bigg|
&\leq
 \sum_{\substack{x^{1/C_0} \leq d \leq x^{1/2} \\ \gcd(d,q)=1\\ \mu^2(d)=1}}
 \sum_{\substack{m \equiv A \Mod{q}\\ m \equiv 0 \Mod{d^2}\\ 
       0 < \epsilon m \leq x}} 
   R(m) \\
\nonumber
&\ll
 x 
 \sum_{\substack{x^{1/C_0} \leq d \leq x^{1/2} \\ \gcd(d,q)=1}}
 \frac{\tau(d^2)^{n+1}}{d^2} \\
\nonumber
&\ll_{\eps}
 x 
 \sum_{\substack{x^{1/C_0} \leq d \leq x^{1/2} \\ \gcd(d,q)=1}}
 d^{-2+\eps} \\
&\ll_{\eps} x^{1-\frac{1-\eps}{C_0}}.
\end{align}
Combining this estimate for $C_0=10n$ with \eqref{eq:R-small-d} and \eqref{eq:1.8}
completes the proof.
\end{proof}

Our next aim is to establish the `major arc estimate' that is required 
in order to deduce Theorem \ref{p:nilsequences} from a 
non-correlation estimate that only involves sufficiently \emph{equidistributed} 
polynomial nilsequences.
The following lemma corresponds to \cite[Lemma 6.2]{bm} and shows that the 
$W$-tricked version of $R^*_S$ has constant average values on certain 
subprogressions.
Recall the definition of $\cA(R^*_S, N)$ from \eqref{eq:def-A}.
\begin{lemma}[Major arc estimate]\label{l:major-arc}
Let $\epsilon \in \{\pm\}$ and let $N>0$ be an integer.
Suppose $A \in \mathcal \cA(R^*_S, N)$ and let $q_0$ be a $w(N)$-smooth number.
Let $x,x' \in \ZZ_{>0}$ be parameters that satisfy $x \asymp x'$.
Then
\begin{align*}
  \frac{W(N)}{x} 
  \sum_{\substack{m \equiv A \Mod{W}\\ 0 < \epsilon m \leq x}}
  R^*_S(m)
 = \frac{W(N)q_0}{x'}
  \sum_{\substack{m \equiv A + Wq_1 \Mod{Wq_0}\\ 0 < \epsilon m \leq x'}}
  R^*_S(m)
 + O(q_0^2W(N)^2x^{-\frac{1}{20n}})
\end{align*}
for all integers $q_1$ and all sufficiently large $N$.
\end{lemma}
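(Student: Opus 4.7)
The strategy is to apply Lemma \ref{l:R-mean-value} directly to both sides of the asserted identity and verify that the resulting main terms coincide and that the combined error is of the claimed size. The bulk of the work lies in checking the hypotheses of Lemma \ref{l:R-mean-value} for each sum and then matching up the Euler products.

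For the left-hand sum I would apply Lemma \ref{l:R-mean-value} with $q = W(N)$ and residue $A$; for the right-hand sum, with $q = W q_0$ and residue $A + W q_1$. The condition ``$p \mid q$ for every $p < w(N)$'' holds for both choices by construction of $W$. The condition $v_p(q) \neq 1$ for $p \notin S$ is satisfied once $N$ is sufficiently large, because $v_p(W) = \alpha(p) \geq 2$ for every $p \leq w(N)$. For the valuation condition on the residue, note that if $p \mid W$ then $v_p(W q_1) \geq v_p(W)$ strictly exceeds both the bound $v_p(A) \leq 1$ (for $p \notin S$) and the bound $v_p(A) < v_p(W)/3$ (for $p \in S$) built into the definition of $\cA(R^*_S, N)$; hence $v_p(A + W q_1) = v_p(A)$. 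Since $q_0$ is $w(N)$-smooth, every prime dividing $W q_0$ already divides $W$, so the valuation constraint for the shifted residue at primes $p \mid W q_0$ with $p \notin S$ is inherited from the analogous constraint on $A$.

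With the hypotheses checked, Lemma \ref{l:R-mean-value} yields, after multiplying each side by its normalising factor ($W/x$ on the left and $W q_0/x'$ on the right), main terms
$$
\kappa^{\epsilon}\, \frac{\rho(W, A)}{W^{n-1}} \prod_{p \nmid W} \bigl(1 - \rho(p^2,0)/p^{2n}\bigr)
\quad\text{and}\quad
\kappa^{\epsilon}\, \frac{\rho(W q_0,\, A+W q_1)}{(W q_0)^{n-1}} \prod_{p \nmid W q_0} \bigl(1 - \rho(p^2,0)/p^{2n}\bigr).
$$
The two Euler products agree, because every prime dividing $q_0$ already divides $W$, so the equality of the main terms reduces to the single identity
$$
\rho(W q_0,\, A + W q_1) \;=\; q_0^{n-1}\, \rho(W, A).
$$
This is the crux of the argument. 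I would prove it by applying the Chinese Remainder Theorem to reduce to the prime-power statement $\rho(p^{v_p(W)+v_p(q_0)},\, A + W q_1) = p^{v_p(q_0)(n-1)}\, \rho(p^{v_p(W)}, A)$ for each $p \mid W$, and then invoking \cite[Lemma 3.4]{bm} in exactly the same way as in the proof of Theorem \ref{t:WA} above: the normalised density $\rho(p^m, \cdot)/p^{m(n-1)}$ is constant in $m$ once $m$ is sufficiently large in terms of $v_p(A)$, and the valuation bounds built into the definition of $\cA(R^*_S, N)$ are precisely what guarantee that $v_p(W)$ is already large enough for the stability result to apply (for $p \in S$ one uses $v_p(A) < v_p(W)/3$, and for $p \notin S$ one uses $v_p(A) \leq 1$ together with $v_p(W) \geq 2$).

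Finally, the error terms produced by Lemma \ref{l:R-mean-value} are $O(W x^{1 - 1/20n})$ and $O(W q_0\, (x')^{1 - 1/20n})$ respectively; after multiplication by the normalising factors and using $x \asymp x'$, these combine into the claimed error $O(q_0^2 W^2 x^{-1/20n})$. The main technical obstacle is the verification of the hypotheses of \cite[Lemma 3.4]{bm}: these are precisely what motivate the two-part valuation bound in the definition of $\cA(R^*_S, N)$.
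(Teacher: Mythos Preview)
Your proposal is correct and follows essentially the same route as the paper: apply Lemma~\ref{l:R-mean-value} to each of the two sums, use the lifting result \cite[Lemma~3.4]{bm} to identify the normalised densities $\rho(W,A)/W^{n-1}$ and $\rho(Wq_0,A+Wq_1)/(Wq_0)^{n-1}$, and collect the error terms. One small imprecision: when $p\notin S$, the bound $v_p(W)\geq 2$ alone does not give $v_p(A)+v_p(n)<v_p(W)/2$; what you actually need (and have, by your earlier observation that $\alpha(p)\to\infty$ as $N\to\infty$) is that $v_p(W)$ exceeds the fixed quantity $2(1+v_p(n))$ once $N$ is sufficiently large.
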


\begin{proof}
We shall employ the lifting result \cite[Lemma 3.4]{bm}, 
which in our context states the following.
Let $m\geq 1$, $A' \not= 0$ and assume that 
$$v_p(A')+v_p(n)< \frac{m}{2}.
$$
Then we have 
\begin{equation} \label{eq:rho-lifting}
\frac{\rho(p^m,A')}{p^{m(n-1)}} = 
\frac{\rho(p^{m+1},A'+kp^m)}{p^{(m+1)(n-1)}},
\end{equation} 
uniformly for $k\in \ZZ/p\ZZ$.

Since the definition of $\cA(R^*_S, N)$ guarantees that all of the 
above assumptions are satisfied, we deduce that
\begin{equation} \label{eq:lifting}
  \frac{\rho(W,A)}{(W)^{n-1}} 
= \frac{\rho(Wq_0,A + Wq_1)}{(Wq_0)^{n-1}}.
\end{equation}
The lemma now follows from an application of Lemma \ref{l:R-mean-value} to each 
of the two sums over $R^*_S$ from the statement, combined with an application 
of the identity \eqref{eq:lifting}.
\end{proof}

\section{Polynomial subsequences of multiparameter nilsequences}
\label{s:technical}
In this section we recall some of the background on equidistribution of 
multiparameter polynomial nilsequences and prove several technical results that 
analyse to what extent equidistribution properties are preserved when passing to 
certain subsequences or families of subsequences.

Throughout what follows, $[x]$ denotes the set of integers 
$\{1,\dots,\lfloor x \rfloor \}$. 
We shall be working with the quantitative notion of 
equidistribution that was introduced by Green and Tao in 
\cite[Def.\ 8.5]{GT-polyorbits}:
\begin{definition}[Quantitative equidistribution]
Let $G/\Gamma$ be a nilmanifold equipped with Haar measure and let $\delta > 0$.
A finite sequence 
$$(g(\mathbf{n})\Gamma)_{\mathbf{n} \in [N_1] \times \dots \times [N_t]}$$ 
taking values in $G/\Gamma$ is called \emph{$\delta$-equidistributed} if
$$
\left|
\frac{1}{|N_1| \dots |N_t|}
\sum_{\mathbf{n} \in [N_1] \times \dots \times [N_t]} F(g(\mathbf{n})\Gamma)
- \int_{G/\Gamma} F
\right|
\leq \delta \|F\|_{\mathrm{Lip}}
$$
for all Lipschitz functions $F:G/\Gamma \to \CC$.
It is said to be \emph{totally $\delta$-equidistributed} if moreover
$$
\left|
\frac{1}{|P_1|\dots |P_t|}
\sum_{\mathbf{n} \in P_1 \times \dots \times P_t} F(g(\mathbf{n})\Gamma) 
- \int_{G/\Gamma} F
\right|
\leq \delta \|F\|_{\mathrm{Lip}}
$$
for all Lipschitz functions $F:G/\Gamma \to \CC$ and for all collections of 
arithmetic progressions $P_i \subset \{1, \dots, N_i\}$
of length $|P_i| \geq \delta N_i$ for $1 \leq i \leq t$.
\end{definition}

The most relevant measure in the analysis of quantitative equidistribution of 
polynomial sequences are the smoothness norms. 
These, too, were introduced in \cite{GT-polyorbits}; see also \cite{GT-erratum}.

\begin{definition}[Smoothness norms]
Let $f: \ZZ^t \to \RR/\ZZ$ be a polynomial of degree $d$ and suppose  
$$
f(n_1, \dots, n_t) 
= 
\sum_{\substack{(i_1, \dots, i_t) \in \ZZ_{\geq0}^t\\ i_1 + \dots + i_t \leq d}}
\beta_{i_1, \dots, i_t} n_1^{i_1} \dots n_t^{i_t}.
$$
Then
$$
\|f\|_{C_*^{\infty}[N_1] \times \dots \times [N_t]} 
:=
\sup_{(i_1, \dots, i_t) \not= \0}
N_1^{i_1} \dots N_t^{i_t} 
\|\beta_{i_1, \dots, i_t}\|.
$$
\end{definition}

Finally, recall that a continuous additive homomorphism $\eta: G \to \RR/\ZZ$ is 
called a horizontal character if it annihilates $\Gamma$.
The equidistribution properties of multiparameter nilsequences can be analysed 
through horizontal characters on $G/\Gamma$ via a theorem of Green and Tao, 
\cite[Theorem 8.6]{GT-polyorbits}, which we state below.
See \cite{GT-erratum} for a proof.

\begin{theorem}[Green-Tao \cite{GT-polyorbits, GT-erratum}] \label{t:GT-eq}
 Let $0 < \delta < 1/2$ and let $m, t ,d, N_1, \dots, N_t \geq 1$ be 
positive integers.
 Suppose that $G/\Gamma$ is an $m$-dimensional nilmanifold equipped with a 
 $\frac{1}{\delta}$-rational Mal'cev basis $\mathcal{X}$ adapted to some 
 filtration $G_{\bullet}$ of degree $\ell$, and that 
 $g \in \mathrm{poly}(\ZZ^t, G_{\bullet})$.
 Then either 
 $(g(\mathbf{n})\Gamma)_{\mathbf{n} \in [N_1] \times \dots \times [N_t]}$ is
 $\delta$-equidistributed, or else there is some horizontal character $\eta$ with
 $0 < |\eta| \ll \delta^{-O_{\ell,m,t}(1)}$ such that
 $$\| \eta \circ g \|_{C^{\infty}[N_1] \times \dots \times [N_t]} 
 \ll \delta^{-O_{\ell,m,t}(1)}.$$
\end{theorem}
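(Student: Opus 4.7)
The plan is to prove the theorem by induction on the degree $\ell$ of the filtration $G_\bullet$, with the multiparameter statement obtained as a generalization of the single-parameter version (the main theorem of \cite{GT-polyorbits}). The two central tools are vertical Fourier decomposition along the last subgroup of the filtration and van der Corput's differencing inequality applied in each parameter direction separately. For the base case $\ell = 1$ the group $G$ is abelian, so $G/\Gamma$ is a torus $\RR^m/\ZZ^m$; any failure of $\delta$-equidistribution is detected by a non-trivial character $\xi \in \ZZ^m$ for which
$$
\Bigl| \frac{1}{N_1 \cdots N_t} \sum_{\mathbf{n} \in [N_1] \times \dots \times [N_t]} e(\xi \cdot g(\mathbf{n})) \Bigr| \gg \delta^{O_{m,t}(1)},
$$
and an iterated multiparameter Weyl-type argument (one application of van der Corput per coordinate direction and per degree of the polynomial) then forces every non-constant coefficient of $\xi \cdot g$ to lie within $\delta^{O(1)}/(N_1^{i_1} \cdots N_t^{i_t})$ of a rational of denominator $\ll \delta^{-O(1)}$. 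This is exactly the desired smoothness norm bound for the horizontal character $\eta$ induced by $\xi$.

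For the inductive step, let $G_{\ell}$ denote the last non-trivial term of the filtration, which is central in $G$ since $[G,G_{\ell}] \subset G_{\ell+1} = \{e\}$. Decompose $F$ via its vertical Fourier series $F = \sum_{\chi} F_{\chi}$ along characters of the fiber torus $G_{\ell}/(G_{\ell}\cap\Gamma)$, and treat the components separately. The trivial component $F_0$ descends to the quotient nilmanifold $G/(G_{\ell}\Gamma)$, which carries a filtration of degree $\ell-1$, so the inductive hypothesis applies directly and produces the desired horizontal character pulled back to $G$. For each non-trivial character $\chi$, I would apply van der Corput's inequality in each of the $t$ parameter directions; this replaces $g$ by a derivative sequence of the form $\mathbf{n} \mapsto g(\mathbf{n}+h\mathbf{e}_i)g(\mathbf{n})^{-1}$, which is again polynomial with respect to a shifted filtration of effectively smaller degree, and the inductive hypothesis yields a horizontal character obstruction for many shifts $h$. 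A combinatorial pigeonhole argument over the shifts, in the spirit of \cite[\S7]{GT-polyorbits}, then assembles these obstructions into a single horizontal character on $G$ of polynomially bounded complexity.

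The main obstacle will be the quantitative tracking of the parameter $\delta$ through the combined induction and the multiple van der Corput iterations. Each differencing step passes to a squared exponential sum averaged over a shift, so the effective $\delta$ is lost polynomially at every stage; one must verify that after the $O_{m,\ell,t}(1)$ applications needed to traverse both the degree of the filtration and the number of parameters, the surviving bound remains polynomial in $\delta^{-1}$ with exponents depending only on $m$, $\ell$, and $t$. Equally delicate is the synthesis of a single horizontal character of the original group $G$ from the large family of lower-level obstructions produced at different shifts and in different coordinate directions, which requires the careful bracket-polynomial and factorization bookkeeping carried out in \cite{GT-polyorbits} together with the corrections in \cite{GT-erratum}.
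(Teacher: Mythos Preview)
The paper does not prove this statement at all: Theorem~\ref{t:GT-eq} is quoted as a result of Green and Tao, with the remark ``See \cite{GT-erratum} for a proof.'' So there is no proof in the paper to compare your proposal against; the theorem functions here purely as a black box input to the later lemmas in Section~\ref{s:technical}.

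That said, your outline is a reasonable high-level sketch of the Green--Tao argument as it appears in \cite{GT-polyorbits} and \cite{GT-erratum}: induction on the filtration degree, vertical Fourier decomposition along $G_\ell/(G_\ell\cap\Gamma)$, van der Corput differencing to drop the degree, and a pigeonhole synthesis step to recover a single horizontal character. One point to flag is that the multiparameter version (Theorem~8.6 of \cite{GT-polyorbits}) is actually deduced in \cite{GT-erratum} from the one-parameter case by a ``freezing coordinates'' argument rather than by rerunning the full induction with $t$ parameters throughout; your proposal to carry $t$ through the entire inductive machinery would work in principle but is not how the cited references proceed, and the bookkeeping you mention as the ``main obstacle'' is considerably heavier that way. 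If your goal is simply to justify the use of Theorem~\ref{t:GT-eq} in this paper, a citation suffices.
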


In the case of polynomial nilsequences, the quantitative notions of 
equidistribution and total equidistribution are equivalent, with polynomial 
dependence in the equidistribution parameter.
The following lemma handles the non-trivial direction of this equivalence.

\begin{lemma}\label{l:totaleq/eq}
 Suppose $\delta: \NN \to (0,1/2)$ is such that $\delta(x)^{-T} \ll_T x$ for 
 $T>0$.
 Let $m, \ell, t$ and $N$ be positive integers and suppose that $G/\Gamma$ is an 
 $m$-dimensional nilmanifold equipped with a 
 $\frac{1}{\delta(N)}$-rational Mal'cev basis $\mathcal{X}$ adapted to some 
 filtration $G_{\bullet}$ of degree $\ell$.
 Then there is a constant $1 \leq C \ll_{\ell,m} 1$ such that the following holds.
 
 Let $E>C$ and let $g \in \mathrm{poly}(\ZZ^t, G_{\bullet})$. 
 Suppose that the finite sequence 
 $(g(\bn)\Gamma)_{\bn \in [N]^t}$ is $\delta(N)^E$-equidistributed. 
 Then  $(g(\bn)\Gamma)_{\bn \in [N]^t}$ is totally 
 $\delta(N)^{E/C}$-equidistributed, provided $N$ is sufficiently large.
\end{lemma}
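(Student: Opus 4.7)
The plan is to argue by contrapositive: assume $(g(\bn)\Gamma)_{\bn \in [N]^t}$ is not totally $\delta(N)^{E/C}$-equidistributed, and show it fails $\delta(N)^E$-equidistribution for some $C = O_{m,\ell,t}(1)$. By the definition of total equidistribution, there exist arithmetic progressions $P_i = \{a_i + q_i m_i : 1 \leq m_i \leq N_i'\} \subset [N]$ with $N_i' \geq \delta(N)^{E/C} N$ (forcing $q_i \leq \delta(N)^{-E/C}$) and a Lipschitz test function witnessing that the restricted polynomial sequence $\tilde g(\m) := g(\a + q\m)$, which lies in $\mathrm{poly}(\ZZ^t, G_\bullet)$, is not $\delta(N)^{E/C}$-equidistributed on $[N_1'] \times \dots \times [N_t']$. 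Applying Theorem \ref{t:GT-eq} produces a non-trivial horizontal character $\eta$ with $|\eta| \ll \delta(N)^{-K_0 E/C}$ and $\|\eta \circ \tilde g\|_{C^{\infty}[N_1'] \times \dots \times [N_t']} \ll \delta(N)^{-K_0 E/C}$ for some $K_0 = O_{m,\ell,t}(1)$.

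The next step is to lift $\eta$ to a horizontal character that detects non-equidistribution of $g$ on all of $[N]^t$. Writing $\eta \circ g(\bn) \equiv \sum_{|I| \leq \ell} \alpha_I \bn^I \pmod{1}$ and expanding $\eta \circ \tilde g(\m)$ in powers of $\m$ yields the triangular relation
\begin{equation*}
\tilde \beta_I = q^I \alpha_I + q^I \sum_{\substack{J > I \\ |J| \leq \ell}} \binom{J}{I} \a^{J-I} \alpha_J.
\end{equation*}
Setting $Q := (q_1 \cdots q_t)^\ell$ (so that $Q/q^I \in \NN$ whenever $|I| \leq \ell$), I consider the horizontal character $\eta^* := Q\eta$, which is non-trivial because $\eta$ is surjective onto $\RR/\ZZ$ and $Q \in \NN$. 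By downward induction on $|I|$, I claim $N^{|I|} \|Q\alpha_I\| \ll \delta(N)^{-K_1 E/C}$ for some $K_1 = O_{m,\ell,t}(1)$. The base case $|I| = \ell$ reduces to $\tilde \beta_I = q^I \alpha_I$, so $\|Q\alpha_I\| \leq (Q/q^I) \|\tilde \beta_I\|$ combined with $Q \leq \delta(N)^{-t\ell E/C}$ and $N/N_i' \leq \delta(N)^{-E/C}$ yields the bound. For the inductive step, I multiply the triangular identity through by $Q/q^I$ to clear fractions, so that the cross terms become $\binom{J}{I}\a^{J-I}(Q\alpha_J)$ with integer coefficients of size $O(N^{|J|-|I|})$; these are controlled via the induction hypothesis applied to the strictly larger multi-indices $J$.

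Finally, I apply Theorem \ref{t:GT-eq} in contrapositive form to $g$ on $[N]^t$ with parameter $\delta(N)^E$: the character $\eta^*$ satisfies $|\eta^*| \leq Q|\eta| \ll \delta(N)^{-K_2 E/C}$ and $\|\eta^* \circ g\|_{C^{\infty}[N]^t} \ll \delta(N)^{-K_2 E/C}$ for $K_2 = O_{m,\ell,t}(1)$, while the contrapositive of the theorem says that if $g$ were $\delta(N)^E$-equidistributed, then any non-trivial horizontal character would have either complexity or smoothness norm at least $c\,\delta(N)^{-K_3 E}$ for constants $c, K_3 = O_{m,\ell,t}(1)$. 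Choosing any $C > K_2/K_3$ depending only on $m,\ell,t$ produces the required contradiction, provided $N$ is large enough that $\delta(N)$ is correspondingly small (which is ensured by the hypothesis $\delta(N)^{-T} \ll_T N$).

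The main obstacle is the inductive lifting in the middle step. Naively inverting the base change $\bn = \a + q\m$ introduces fractional factors $1/q^I$ that are incompatible with the $\|\cdot\|$-norm modulo $\ZZ$; replacing $\eta$ by $Q\eta$ renders the triangular system integer-valued, permits termwise estimation mod $1$, and leaves only a polynomial loss in $\delta(N)^{-E/C}$ whose exponent depends only on $m$, $\ell$, $t$ and is absorbed into the final choice of $C$.
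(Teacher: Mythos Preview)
Your overall architecture---contrapositive, apply Theorem~\ref{t:GT-eq} to the restricted sequence $\tilde g$, lift the resulting horizontal character back to $g$ via the triangular system and the multiplier $Q$---is the natural one, and your inductive lifting in the middle step is carried out correctly. The gap is in the \emph{final} step.

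What you call the ``contrapositive'' of Theorem~\ref{t:GT-eq} is in fact its \emph{converse}, and that converse is false. Theorem~\ref{t:GT-eq} asserts the implication
\[
\neg\bigl(\delta\text{-equidistributed}\bigr)\ \Longrightarrow\ \exists\,\eta:\ 0<|\eta|\ll\delta^{-O(1)}\ \text{and}\ \|\eta\circ g\|_{C^\infty}\ll\delta^{-O(1)}.
\]
Its contrapositive is a \emph{sufficient} condition for equidistribution (``if every small character has large smoothness norm, then equidistributed''). You instead need the converse implication (``equidistributed $\Rightarrow$ every small character has large smoothness norm''), and this is not true. Concretely, take $G/\Gamma=\RR/\ZZ$, $t=\ell=1$, and $g(n)=(M/N)n$ with a fixed integer $M$ coprime to $N$. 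Then $(g(n)\Gamma)_{n\le N}$ is a permutation of $\{0,1/N,\dots,(N-1)/N\}$ and hence is $O(1/N)$-equidistributed; yet the identity character $\eta$ has $|\eta|=1$ and $\|\eta\circ g\|_{C^\infty[N]}=N\cdot\|M/N\|=M$. So one can have a non-trivial character of bounded modulus and bounded smoothness norm while the sequence is extremely well equidistributed. In your situation the bound you obtain, $\|\eta^*\circ g\|_{C^\infty[N]^t}\ll\delta(N)^{-K_1E/C}$, is a \emph{large} upper bound, and the existence of such an $\eta^*$ does not by itself contradict $\delta(N)^E$-equidistribution of $g$ on $[N]^t$. (Compare the paper's own use of the easy converse elsewhere: after producing a small character it only concludes failure of \emph{total} equidistribution, citing \cite[Propositions~14.2 and~14.3]{lmr}, which is a strictly weaker conclusion than what you need here.)

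The paper does not give an argument for this lemma; it cites the computation in \cite[Lemma~6.2]{lmm}. Whatever that computation does, it cannot proceed by the bare ``existence of one small character $\Rightarrow$ failure of plain equidistribution'' step you have written, since that step is false. You would need either a different bridge back to non-equidistribution of $g$ on $[N]^t$ (for instance, arranging the smoothness norm of some character to be bounded by an \emph{absolute small constant} rather than by $\delta(N)^{-O(E/C)}$, at which point the easy converse does apply), or a direct argument that bypasses the character criterion entirely.
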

\begin{proof}
 This is a rather straightforward generalisation of the computation carried out 
in the proof of \cite[Lemma 6.2]{lmm}.
\end{proof}
We note aside that the factorisation theorem for nilsequences 
\cite[Theorem 1.19]{GT-polyorbits} will allow us to assume that $E$ is 
sufficiently large for the condition $E>C$ of the above lemma to be 
satisfied at all instances we will make use of it.

The multiparameter nilsequences that will be most relevant to the proof of 
Theorem~\ref{p:nilsequences} are those that arise as the composition 
$g \circ P$ of a polynomial $P \in \ZZ[X_1, \dots, X_t]$ and a one-parameter 
nilsequence $g$.

\begin{lemma}
Suppose $\delta: \NN \to (0,1/2)$ is a function that satisfies
$\delta(x)^{-T} \ll_T x$ for all $T > 0$.
Let $m, \ell, t$ and $N$ be positive integers and suppose that $G/\Gamma$ 
is an $m$-dimensional nilmanifold equipped with a $\frac{1}{\delta(N)}$-rational 
Mal'cev basis $\mathcal{X}$ adapted to some filtration $G_{\bullet}$ of degree 
$\ell$.
Let $P \in \ZZ[X_1,\dots,X_t]$ be a homogeneous polynomial of degree $t$, fixed 
once and for all, and let all implied constants be allowed to depend on the 
coefficients of $P$ in any way.
Then there is a constant $1 \leq C \ll_{t,\ell} 1$ such that the following 
holds.

Let $E>C$ and let $g \in \mathrm{poly}(\ZZ^t, G_{\bullet})$. 
Suppose that  
$(g(n) \Gamma)_{n\leq N}$ is totally $\delta(N)^E$-equidistributed.
Then
$$
(g(P(n_1\dots,n_t) \Gamma))_{
(n_1\dots,n_t) \in [N^{1/t}] \times \dots \times[N^{1/t}]}
$$
is totally $\delta(N)^{E/C}$-equidistributed whenever $N$ is sufficiently large.
\end{lemma}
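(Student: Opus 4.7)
The plan is to argue by contradiction, using Theorem \ref{t:GT-eq} and Lemma \ref{l:totaleq/eq} to extract a non-trivial horizontal character witnessing non-equidistribution of $g \circ P$ on the multiparameter box, and then to transfer it to a non-trivial horizontal character of $G$ that contradicts the total equidistribution of $(g(n)\Gamma)_{n \leq N}$ via Theorem \ref{t:GT-eq} applied to $g$ itself. Set $M := N^{1/t}$ and suppose for contradiction that $(g(P(\mathbf{n}))\Gamma)_{\mathbf{n} \in [M]^t}$ fails to be totally $\delta(N)^{E/C}$-equidistributed, for a constant $C$ depending on $t$, $\ell$, $m$ to be fixed at the end. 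Applying the contrapositive of Lemma \ref{l:totaleq/eq} produces an exponent $E_1$ comparable to $E$ for which $g \circ P$ is not $\delta(N)^{E_1}$-equidistributed on $[M]^t$; here one uses that the hypothesised $\frac{1}{\delta(N)}$-rationality of the Mal'cev basis remains valid at scale $M$ by monotonicity of $\delta$. An application of Theorem \ref{t:GT-eq} to $g \circ P \in \mathrm{poly}(\ZZ^t, G_{\bullet})$ then yields a non-trivial horizontal character $\eta$ of $G/\Gamma$ with
$$
0 < |\eta| \ll \delta(N)^{-O(E)} \quad \text{and} \quad \|\eta \circ g \circ P\|_{C^{\infty}[M]^t} \ll \delta(N)^{-O(E)},
$$
where the implicit constants depend only on $t$, $\ell$, $m$.

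The next step is the coefficient transfer. Writing $\eta \circ g(n) = \sum_{j=0}^{\ell} \alpha_j n^j$ and expanding
$$
P(n_1,\dots,n_t)^j = \sum_{|\mathbf{i}| = tj} c^{(j)}_{\mathbf{i}}\, n_1^{i_1} \cdots n_t^{i_t}
$$
with bounded integer coefficients $c^{(j)}_{\mathbf{i}}$, the coefficients of $\eta \circ g \circ P$ as a polynomial on $\ZZ^t$ are $\beta_{\mathbf{i}} = c^{(j)}_{\mathbf{i}} \alpha_j$ when $|\mathbf{i}| = tj$ and $0$ otherwise. Since $P^j$ is non-zero for each $1 \leq j \leq \ell$, one may fix a multi-index $\mathbf{i}(j)$ with $|\mathbf{i}(j)| = tj$ and $c^{(j)}_{\mathbf{i}(j)} \neq 0$. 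Setting $D := \prod_{j=1}^{\ell} c^{(j)}_{\mathbf{i}(j)} \in \ZZ \setminus \{0\}$, the character $\eta' := D \eta$ is non-trivial (multiplication by a non-zero integer being injective on the lattice of horizontal characters of $G/\Gamma$), has height $|\eta'| \leq |D| \cdot |\eta| \ll_{t,\ell} \delta(N)^{-O(E)}$, and satisfies $\eta' \circ g(n) = \sum_j (D \alpha_j) n^j$. Using the elementary inequality $\|Dx\| \leq |D/c| \cdot \|cx\|$ valid for any non-zero integer $c$ dividing $D$, together with $N^j = M^{|\mathbf{i}(j)|}$, one deduces
$$
N^j \|D \alpha_j\| \leq \bigl|D/c^{(j)}_{\mathbf{i}(j)}\bigr| \cdot M^{|\mathbf{i}(j)|} \|\beta_{\mathbf{i}(j)}\| \ll_{t,\ell} \|\eta \circ g \circ P\|_{C^{\infty}[M]^t}
$$
for every $1 \leq j \leq \ell$, whence $\|\eta' \circ g\|_{C^{\infty}[N]} \ll_{t,\ell} \delta(N)^{-O(E)}$.

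The pair $(\eta', g)$ now constitutes exactly the obstruction forbidden by the contrapositive of Theorem \ref{t:GT-eq} applied to $(g(n)\Gamma)_{n \leq N}$ whenever $g$ is $\delta(N)^E$-equidistributed, provided $C$ is chosen large enough that the various $\delta(N)^{-O(E)}$ bounds above lie within the range allowed when invoking the Green--Tao theorem at parameter $\delta(N)^E$. This contradicts the hypothesis that $(g(n)\Gamma)_{n \leq N}$ is totally (and hence $\delta(N)^E$-) equidistributed, completing the proof. The main technical subtlety is the coefficient transfer step: smallness of $\|c\alpha\|$ for a non-zero integer $c$ does not in general imply smallness of $\|\alpha\|$, so $\eta$ itself need not witness non-equidistribution of $g$ on $[N]$. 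The device of passing to $\eta' = D\eta$ bypasses this obstruction, paying only bounded factors in the height and smoothness-norm bounds.
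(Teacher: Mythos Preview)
Your overall strategy matches the paper's: argue by contradiction, apply Lemma~\ref{l:totaleq/eq} and Theorem~\ref{t:GT-eq} to $g\circ P$ to obtain a horizontal character $\eta$, transfer the smoothness-norm bound from $\eta\circ g\circ P$ to $\eta'\circ g$ for a bounded integer multiple $\eta'$ of $\eta$, and derive a contradiction to the equidistribution of $g$. Your coefficient transfer via $D=\prod_j c^{(j)}_{\mathbf i(j)}$ and the divisibility inequality $\|Dx\|\le|D/c|\,\|cx\|$ is a clean variant of the paper's argument (which uses the lcm $\gamma$ of all non-zero coefficients and the observation that $\|\beta_j\gamma^{(j)}_{\mathbf i}\|=o(1)$ so that $\|A\beta_j\gamma^{(j)}_{\mathbf i}\|=A\|\beta_j\gamma^{(j)}_{\mathbf i}\|$ for bounded $A$ once $N$ is large); both devices achieve the same end.

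There is, however, a genuine gap. You assert that $g\circ P\in\mathrm{poly}(\ZZ^t,G_\bullet)$ and apply Theorem~\ref{t:GT-eq} and Lemma~\ref{l:totaleq/eq} directly. This is false in general: if $G_\bullet$ has degree $\ell$ and $P$ has degree $t$, then $g\circ P$ has degree $t\ell$ and need not be polynomial with respect to $G_\bullet$ (take $G=\RR$, $G_\bullet$ of degree $1$, $g(n)=\alpha n$, $P(n_1,n_2)=n_1n_2$). The paper spends its first paragraph constructing a refined filtration $G'_\bullet$ (setting $G'_j=G_{\lceil j/\ell'\rceil}$) of degree $O_{t,\ell}(1)$, checking that $g\circ P\in\mathrm{poly}(\ZZ^t,G'_\bullet)$ via Leibman's theorem, and verifying that the given Mal'cev basis remains adapted to $G'_\bullet$. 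You must do the same before invoking Theorem~\ref{t:GT-eq}.

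Two smaller points. First, the final step is not the \emph{contrapositive} of Theorem~\ref{t:GT-eq} but its (easier) \emph{converse}: the existence of a small non-trivial character with small smoothness norm implies failure of equidistribution. This requires a separate argument; the paper cites \cite[Propositions~14.2 and~14.3]{lmr}. Second, your appeal to ``monotonicity of $\delta$'' is unjustified, as no monotonicity is assumed; the application of Lemma~\ref{l:totaleq/eq} at scale $M=N^{1/t}$ with parameter $\delta(N)$ should be handled more carefully (the paper simply applies the lemma without comment, relying on the growth condition $\delta(x)^{-T}\ll_T x$).
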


\begin{proof}
In order to apply Lemma \ref{l:totaleq/eq} and Theorem \ref{t:GT-eq} 
to the sequence
$(g(P(\bn)))_{\bn \in \ZZ^t}$, we require first of all a filtration 
$G'_{\bullet}$ with respect to which 
$(g(P(\bn)))_{\bn \in \ZZ^t}$ is a polynomial sequence.
It follows from \cite[Lemma 6.7]{GT-polyorbits} that $g$ has a representation 
of the form
$g(n) = a_1^{P_1(n)}\dots a_k^{P_k(n)}$ where 
$a_1, \dots, a_k \in G$ and where $P_1, \dots, P_k \in \ZZ[X]$ are polynomials of 
degree at most $\ell$.
Thus, 
$g(P(\bn))= a_1^{P_1\circ P(\bn)}\dots a_k^{P_k \circ P(\bn)}$, where each 
polynomial $P_i \circ P$  
has degree at most $\ell' = \ell \cdot \max_{1\leq i \leq k}(\deg P_i)$.
Define a filtration $G'_{\bullet}$ by setting $G'_j=G_{\lceil j/\ell' \rceil}$ 
for $0\leq j \leq \ell' \ell$.
Then it is immediate (c.f.\ the discussion following Lemma 6.7 in 
\cite{GT-polyorbits}) that
$(a_i^{P_i \circ P(\bn)})_{\bn \in \ZZ^t}$ belongs to 
$\mathrm{poly}(G'_{\bullet}, \ZZ^t)$
for each $1\leq i \leq k$.
By Leibman's theorem~\cite{Leibman} (see \cite{GT-polyorbits} for a different 
proof), the set $\mathrm{poly}(G'_{\bullet}, \ZZ^t)$ forms a group.
Thus it follows that 
$(g(P(\bn)))_{\bn \in \ZZ^t} \in \mathrm{poly}(\ZZ^t,G'_{\bullet})$.

Finally observe that the given Mal'cev basis $\mathcal{X}$ is a Mal'cev basis 
adapted to $G'_{\bullet}$ as well.
Indeed, part (ii) of \cite[Definition 2.1]{GT-polyorbits} follows immediately from 
the corresponding statement for $G_{\bullet}$ since  
$\{G'_i: 0\leq i \leq s\} \subseteq \{G_i: 0\leq i \leq s\}$. 

We are now in the position to start with the proof of the lemma.
Suppose $B\geq1$ and that
$$
(g(P(n_1\dots,n_t) \Gamma))_{
(n_1\dots,n_t) \in [N^{1/t}]^t}
$$
fails to be totally $\delta(N)^{B}$-equidistributed.
Then, by Lemma \ref{l:totaleq/eq} there is a constant $1 \leq C_1 \ll_{\ell,m} 1$ 
such that the above sequence also fails to be 
$\delta(N)^{C_1B}$-equidistributed.
Thus, Theorem \ref{t:GT-eq} implies that there is a non-trivial horizontal 
character
$\eta: G/\Gamma \to \CC$ of modulus $|\eta| \ll \delta(N)^{-O_{\ell,m,t}(B)}$ 
such that
$$\|\eta \circ g \circ P\|_{C_*^{\infty}[N^{1/t}]^t} 
 \ll \delta(N)^{-O_{\ell,m,t}(B)}.$$
Writing
$$
P(n)^j= 
\sum_{\substack{i_1, \dots, i_t \geq 0 \\ i_1 + \dots + i_t = tj}}
\gamma_{i_1, \dots, i_t}^{(j)} n_1^{i_1} \dots n_t^{i_t}
\qquad (1 \leq j \leq \ell),
$$
then all coefficients $\gamma_{i_1, \dots, i_t}^{(j)}$ are bounded.
If further
$$\eta \circ g (n) = \sum_{i=j}^{\ell} \beta_j n^j,$$
then 
\begin{equation}\label{eq:gP-sup}
\| \eta \circ g \circ P \|_{C_*^{\infty}[N^{1/t}]^t} = 
\sup_{\substack{1\leq j \leq \ell \\ i_1 + \dots + i_t = tj}} 
N^j \|\beta_j \gamma_{i_1, \dots, i_t}^{(j)} \| 
\ll \delta(N)^{-O_{\ell,m,t}(B)}. 
\end{equation}
Let $\gamma$ be the least common multiple of all non-zero 
coefficients $\gamma_{i_1, \dots, i_t}^{(j)}$ and 
observe that $\gamma \ll_{\ell,m,t} 1$.
Since $\delta(x)^{-T}\ll_T x$, we have 
$$\|\beta_j \gamma_{i_1, \dots, i_t}^{(j)} \|
\ll \delta(N)^{-O_{\ell,t,m}(B)}N^{-j} = o_{\ell,t,m}(1)$$ 
whenever $i_1 + \dots + i_t = tj$ and $1 \leq j \leq \ell$.
Hence, provided $N$ is sufficiently large, we have 
$\|A \beta_j \gamma_{i_1, \dots, i_t}^{(j)} \| 
= A \|\beta_j \gamma_{i_1, \dots, i_t}^{(j)} \|$
for any positive real $A \ll_{\ell,t,m} 1$.
In particular, we have
$\|\beta_j\gamma\|
\ll_{\ell,t,m} \|\beta_j \gamma_{i_1, \dots, i_t}^{(j)} \|$ whenever
$\gamma_{i_1, \dots, i_t}^{(j)}$ is non-zero.
Since for every $j \in \{1,\dots,\ell\}$ at least one of the coefficients 
$\gamma_{i_1, \dots, i_t}^{(j)}$ of $P^j$ is non-zero, the above and 
\eqref{eq:gP-sup} imply that
$$
\|\gamma \eta \circ g\|_{C_*^{\infty}[N]}
=
\sup_{1\leq j \leq \ell } 
N^j \|\beta_j \gamma \| \ll \delta^{-O_{\ell,m,t}(B)},
$$
provided $N$ is sufficiently large.
Since $\gamma \eta$ is a non-trivial horizontal character of modulus 
$|\gamma \eta| \ll \delta^{-O_{\ell,m,t}(B)}$, we deduce that
(cf.\ \cite[Propositions 14.2 and 14.3]{lmr}) there is a constant 
$C_2 \asymp_{\ell,m,t} 1$ such that $(g(n)\Gamma)_{n\leq N}$ fails to be totally 
$\delta(N)^{C_2B}$-equidistributed.
Choosing $C = \max (1,C_2)$, the result follows for every $E>C$ by setting 
$B=E/C$.
Indeed, when $E=CB \geq C_2B$, then the above conclusion that 
$(g(n)\Gamma)_{n\leq N}$ fails to be totally $\delta(N)^{C_2B}$-equidistributed 
contradicts the assumption that this sequence is totally 
$\delta(N)^{E}$-equidistributed.
\end{proof}

Our next aim is to extend the above lemma in a way that allows us to replace the 
homogeneous polynomial $P$ by an inhomogeneous polynomial of the form
$$\x \mapsto \frac{P(Wq\x + \y) - A'}{Wq},$$
where $W=W(N)$ is given by \eqref{def:W}, where $q \in \NN$,
where $\y \in \ZZ^t$ is such that $0 \leq y_i < Wq$ for $1\leq i \leq t$, 
and where $A'$ is such that $P(\y) \equiv A' \Mod{Wq}$ and $|A'| < Wq$.

\begin{lemma} \label{l:poly-subsequences}
Let $N$ be a positive integer and suppose $T=T(N)$ satisfies 
$N^{1 - \eps} \ll_{\eps} T \leq N$ for all $\eps > 0$.
Let $\delta: \NN \to (0,1/2)$ be a function that satisfies 
$\delta(x)^{-1} \ll_{\eps} x^{\eps}$ for all $\eps > 0$.
Let $m,\ell$ and $t$ be positive integers and suppose that $G/\Gamma$ is 
an $m$-dimensional nilmanifold equipped with a $\frac{1}{\delta(N)}$-rational 
Mal'cev basis $\mathcal{X}$ adapted to some filtration $G_{\bullet}$ of degree 
$\ell$. 
Let $g \in \mathrm{poly}(\ZZ, G_{\bullet})$ be any polynomial sequence, and let 
$P \in \ZZ[X_1, \dots, X_t]$ be a fixed homogeneous polynomial of degree $t$. 
All implied constants are allowed to depend on the coefficients of $P$ in any 
way. 
Suppose $S:\NN \to \NN$ satisfies $S(x) \ll_{\eps} x^{\eps}$ for all $\eps>0$.

Then there is a constant $1 \leq C\ll_{m,\ell,t} 1$ such that the following holds.
Let $E>C$ and suppose that for every $w(N)$-smooth integer $\tilde q \leq S(N)$ 
the sequence $(g(\tilde qn)\Gamma)_{n \leq T/\tilde q}$ is totally 
$\delta(N)^E$-equidistributed.
Further, let $q>0$ be a $w(N)$-smooth integer that satisfies the bound 
$(Wq)^{t\ell^2} \leq S(N)$ where $W=W(N)$.
Then, provided $N$ and $T$ are sufficiently large,
$$
\bigg(g\bigg(\frac{P(Wq\x+\y) - A'}{Wq}\bigg)\Gamma\bigg)_
{\x \in \left[\left(\frac{T}{(Wq)^{t-1}}\right)^{1/t}\right]^t}
$$
is a totally $\delta(N)^{E/C}$-equidistributed sequence for every choice of
$\y \in \ZZ^t$ such that $0 \leq y_i < Wq$ for $1\leq i \leq t$ and for 
$A' \in \ZZ$ such that $P(\y) \equiv A' \Mod{Wq}$ and $|A'| < Wq$.
\end{lemma}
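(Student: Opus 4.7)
I would prove this by contradiction, essentially extending the argument of the preceding lemma to the inhomogeneous polynomial
\[
Q(\x) \;:=\; \frac{P(Wq\x + \y) - A'}{Wq},
\]
which is integer-valued on $\ZZ^t$ but whose coefficients in $\x$ can be as large as $O_P((Wq)^{t-1})$ rather than $O(1)$.  First I would verify, by an argument parallel to that in the preceding lemma (via the expansion $g(n)=a_1^{P_1(n)}\cdots a_k^{P_k(n)}$ together with Leibman's theorem), that $g\circ Q\in\mathrm{poly}(\ZZ^t,G'_\bullet)$ for an appropriate filtration $G'_\bullet$ to which the given Mal'cev basis $\mathcal{X}$ remains adapted.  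Suppose, for some constant $C$ to be specified, that the sequence fails to be totally $\delta(N)^{E/C}$-equidistributed.  Applying Lemma \ref{l:totaleq/eq} (to upgrade from total to ordinary failure) and then Theorem \ref{t:GT-eq} supplies a non-trivial horizontal character $\eta$ of complexity $\ll\delta(N)^{-O(E/C)}$ with
\[
\|\eta\circ g\circ Q\|_{C_*^{\infty}[M]^t}\;\ll\;\delta(N)^{-O(E/C)},\qquad M := \bigl(N/(Wq)^{t-1}\bigr)^{1/t}.
\]

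A direct expansion shows that the coefficients of $Q(\x)$ in $\x$ are integers of size $\ll_P(Wq)^{t-1}$, with the top-degree part equal to $(Wq)^{t-1}P(\x)$.  Writing $\eta\circ g(n)=\sum_{j=0}^\ell\beta_j n^j$ and $\eta\circ g\circ Q(\x)=\sum_\alpha c_\alpha \x^\alpha$, one then checks that for every $\alpha$ with $|\alpha|=tj$,
\[
c_\alpha \;=\; (Wq)^{(t-1)j}\,p_\alpha^{(j)}\,\beta_j \;+\; \sum_{j'>j}b_{j',\alpha}\,\beta_{j'},
\]
where $p_\alpha^{(j)}$ is the $\x^\alpha$-coefficient of $P(\x)^j$ (a bounded integer) and the cross-term integers $b_{j',\alpha}$ satisfy $|b_{j',\alpha}|\ll_{P,\ell}(Wq)^{(t-1)j'}$.

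I would then set $\tilde q := D\cdot(Wq)^{(t-1)\ell}$, where $D$ is a bounded positive integer absorbing the gcds of the coefficients of $P(\x)^j$ for $1\leq j\leq\ell$.  This $\tilde q$ is $w(T)$-smooth (the prime factors of $D$ are bounded, hence $<w(T)$ for $T$ large, and $Wq$ is smooth by construction) and satisfies $\tilde q\leq (Wq)^{t\ell^2}\leq S(T)$.  The plan is to establish, by backward induction on $j$ from $\ell$ down to $1$, that
\[
\|\tilde q^j\beta_j\|\;\ll_{\ell,P}\;(Wq)^{O_\ell(1)}\,\delta(N)^{-O(E/C)}\,M^{-tj}.
\]
For the base case $j=\ell$, only $\beta_\ell$ contributes to $c_\alpha$ when $|\alpha|=t\ell$, so Bezout on the bounded integers $p_\alpha^{(\ell)}$ (with gcd $D_\ell\mid D$) followed by multiplication by the positive integer $(Wq)^{(t-1)\ell(\ell-1)}/D_\ell$ (integral for $T$ large since every prime factor of $D_\ell$ divides $Wq$) yields the bound.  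In the inductive step one uses the $|\alpha|=tj$ equations to isolate $\beta_j$ modulo the cross-terms $\beta_{j'}$ with $j'>j$; the difficulty---and the main obstacle of the entire argument---is bounding $\|b_{j',\alpha}\beta_{j'}\|$, since $|b_{j',\alpha}|\cdot\|\beta_{j'}\|$ is useless.  My strategy is to pre-multiply the $|\alpha|=tj$ inequality by an appropriate integer power of $Wq$ so that each cross-term coefficient becomes an integer multiple of $\tilde q^{j'}$, whereupon the inductive bound on $\|\tilde q^{j'}\beta_{j'}\|$ applies directly; the generous hypothesis $(Wq)^{t\ell^2}\leq S(T)$ is precisely tailored to admit this integralization.

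Combining the resulting bounds yields $\|\eta\circ g(\tilde q\,\cdot)\|_{C_*^\infty[N/\tilde q]}\ll\delta(N)^{-O(E/C)}$.  A final application of Theorem \ref{t:GT-eq}, followed by Lemma \ref{l:totaleq/eq}, to the one-parameter sequence $(g(\tilde q n)\Gamma)_{n\leq N/\tilde q}$ therefore forces that sequence to fail total $\delta(N)^{O(E/C)}$-equidistribution, contradicting the hypothesis once $C$ is chosen sufficiently large in terms of $m$, $\ell$ and $t$, and completing the proof.
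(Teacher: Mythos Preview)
Your proposal is correct and follows essentially the same route as the paper: set up the refined filtration, assume failure of equidistribution, apply Lemma~\ref{l:totaleq/eq} and Theorem~\ref{t:GT-eq} to obtain a horizontal character with small smoothness norm, expand the coefficients of $\eta\circ g\circ Q$ and run a downward induction in $j$ to extract graded bounds on integer multiples of the $\beta_j$, then contradict the hypothesis on $(g(\tilde q n)\Gamma)$. The only cosmetic differences are that the paper uses the lcm $\gamma$ of the nonzero coefficients of the $P^j$ (picking one nonzero coefficient per level) where you invoke B\'ezout on their gcds, and the paper tracks level-dependent multipliers $q_j=(Wq)^{(t-1)(\ell+\cdots+j)}\gamma^{1+\ell-j}$ rather than your powers $\tilde q^{\,j}$; both bookkeeping schemes serve the same purpose and fit under the hypothesis $(Wq)^{t\ell^2}\leq S(T)$.
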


\begin{proof}
Let us write
$$
\tilde g(\x) = g\bigg(\frac{P(Wq\x+\y) - A'}{Wq}\bigg).
$$
Similarly as in the previous proof, there is a refinement $G'_{\bullet}$ of 
the filtration $G_{\bullet}$ such that the new filtration is adapted to the basis 
$\mathcal{X}$ and its degree is of order $O_{\ell,t}(1)$, and such that
$(\tilde g(\x) \Gamma)_{\x \in \ZZ^t} \in \mathrm{poly}(G'_{\bullet}, \ZZ^t)$.

Let $B>1$ and suppose that
$$
\Big(\tilde g(\x) \Gamma\Big)_
{\x \in \left[\left(\frac{T}{(Wq)^{t-1}}\right)^{1/t}\right]^t}
$$
fails to be totally $\delta(N)^{B}$-equidistributed.
Then, as in the proof of the previous lemma, Lemma \ref{l:totaleq/eq} and Theorem 
\ref{t:GT-eq} imply that there is a non-trivial horizontal character 
$\eta:G/\Gamma \to \CC$ such that
$|\eta| \ll \delta(N)^{-O_{\ell,m,t}(B)}$ and
$$
\|\eta \circ \tilde g\|_{C_*^{\infty} 
\left[\left(\frac{T}{(Wq)^{t-1}}\right)^{1/t}\right]^t}
\ll \delta^{-O_{m,\ell,t}(B)}.
$$
Suppose
$$
P(\bn)^j= 
\sum_{\substack{i_1, \dots, i_t \geq 0 \\ i_1 + \dots + i_t = tj}}
\gamma^{(j)}_{i_1, \dots, i_t} n_1^{i_1} \dots n_t^{i_t},
\qquad (1 \leq j \leq \ell)
$$
and note that for each $j$ at least one of the coefficients 
$\gamma^{(j)}_{i_1, \dots, i_t}$ is non-zero.
Furthermore, suppose
$$\eta \circ g (n) = \sum_{j=0}^{\ell} \beta_j n^j,$$
where $\beta_j\not= 0$ for at least one value $j>0$. 

We proceed by analysing the coefficients of the polynomial map 
$\x \mapsto \eta \circ g(\frac{P(Wq\x+\y) - A'}{Wq})$.
To begin with, observe that
$$
\frac{P(Wq\x+\y) - A'}{Wq}
= (Wq)^{t-1}P(\x) + P'(\x),
$$
for some polynomial $P'$ of degree $t-1$ with coefficients of order 
$O((Wq)^{t-1})$.
Inserting this information into the above expression for $\eta \circ g$, we obtain
\begin{align*}
\eta \circ g \bigg(\frac{P(Wq\x+\y) - A'}{Wq}\bigg)
&= 
\sum_{j=0}^{\ell} \beta_j
\sum_{\substack{i_1, \dots, i_t \geq 0 \\ i_1 + \dots + i_t = tj}}
(Wq)^{(t-1)j}
\gamma^{(j)}_{i_1, \dots, i_t} x_1^{i_1} \dots x_t^{i_t}\\
&+
\sum_{j=0}^{\ell} \beta_j
\sum_{\substack{i_1, \dots, i_t \geq 0 \\ i_1 + \dots + i_t \leq tj-1}}
c^{(j)}_{i_1, \dots, i_t}
x_1^{i_1} \dots x_t^{i_t},
\end{align*}
where $|c^{(j)}_{i_1, \dots, i_t}| \ll (Wq)^{(t-1)j}$.
If $\eta \circ \tilde g$ has the representation
$$
\eta \circ \tilde g(\x)
= \sum_{\substack{i_1, \dots, i_t \geq 0 \\ i_1 + \dots + i_t \leq t\ell}}
\alpha_{i_1, \dots, i_t} x_1^{i_1} \dots x_t^{i_t},
$$
then
$$
\sup_{i_1 + \dots + i_t \leq t\ell}
\bigg(\frac{T}{(Wq)^{t-1}}\bigg)^{\frac{i_1 + \dots + i_t}{t}}
\|\alpha_{i_1, \dots, i_t} \| \ll \delta^{-O_{\ell,m,t}(B)},
$$
or, in other words,
\begin{equation} \label{eq:smoothness-bd}
\|\alpha_{i_1, \dots, i_t} \| 
\ll \delta^{-O_{\ell,m,t}(B)}
(Wq)^{(i_1 + \dots + i_t)\frac{t-1}{t}}
T^{-(i_1 + \dots + i_t)/t}
\end{equation}
holds uniformly for all admissible tuples  $(i_1, \dots, i_t)$.
Since $W(N)q \ll_{\eps} T^{\eps}$ and $\delta^{-1}(x)\ll_{\eps} x^{\eps}$, we in 
fact have the following `graded' bounds in terms of the value 
$j = (i_1+ \dots + i_t)/t$:
\begin{equation} \label{eq:smoothness-bd-1}
\|\alpha_{i_1, \dots, i_t} \| 
\ll_{\eps} 
T^{-\frac{i_1 + \dots + i_t}{t} + \eps + o(1)}.
\end{equation}
Let $\gamma$ be, as before, the least common multiple of non-zero coefficients 
$\gamma_{i_1, \dots, i_t}^{(j)}$.
We aim to deduce from \eqref{eq:smoothness-bd} and \eqref{eq:smoothness-bd-1} 
bounds with a graded decay for the terms $\|\beta_{j} q_j \|$, $1 \leq j \leq 
\ell$, too, where the $q_j$ are small integers compared to $S(N)$ and 
$w(N)$-smooth; in fact, they will take the form
$$q_j:= (Wq)^{(t-1)(\ell + (\ell-1)  + \dots  +j)} \gamma^{1+\ell-j}.$$
Note that
$$\alpha_{i_1, \dots, i_t} 
= \beta_d (Wq)^{(t-1)\ell} \gamma^{(\ell)}_{i_1, \dots, i_t}$$
whenever $i_1 +  \dots + i_t = t\ell$.
By \eqref{eq:smoothness-bd}, this immediately yields
$$\| \beta_{\ell} (Wq)^{(t-1)\ell} \gamma \| 
\ll \delta^{-O_{\ell,m,t}(1)}
(Wq)^{(t-1)\ell} T^{-\ell}.$$
More generally, we have
$$
\alpha_{i_1, \dots, i_t} 
= \beta_j (Wq)^{(t-1)j} \gamma^{(j)}_{i_1, \dots, i_t}
+ \sum_{k>j} \beta_k c^{(k)}_{i_1, \dots, i_t}
$$
when $i_1 + \dots + i_t = tj$.
Multiplying through by $q_{j+1}$, this identity allows us to employ a 
downwards-inductive argument, taking advantage of the graded decay bounds that 
can be assumed inductively for $\|\beta_k q_k\|$ with $k>j$.
Indeed, by applying \eqref{eq:smoothness-bd} 
to $\alpha_{i_1, \dots, i_t}$ and the induction hypotheses to 
$\beta_k q_k$ for $k>j$, we deduce that
$$\| \beta_j q_j \| 
\ll \delta^{-O_{\ell,m,t}(B)}
q_j T^{-j}.
$$
It follows that
$$
\sup_{1\leq j \leq \ell} (T/q_{\ell})^{j} \|q_{\ell}^j \beta_j \| \ll 
\delta^{-O_{\ell,m,t}(B)}.
$$
If $N$ and $T$ are sufficiently large, then Lemma \ref{l:totaleq/eq} 
and Theorem \ref{t:GT-eq} imply that there is $C_1 \asymp_{\ell,m,t} 1$ such that
$g(q_{\ell} n)_{n \leq T/q_{\ell}}$ fails to be totally 
$\delta(N)^{C_1B}$-equidistributed.
Setting $C=\max(C_1,1)$, the result follows for every $E>C$ by choosing 
$B=E/C$.
Indeed, if $E=BC>BC_1$ and if $g$ satisfies all hypotheses from the 
statement, then, in particular, $g(q_{\ell} n)_{n \leq T/q_{\ell}}$ is totally 
$\delta(N)^{E}$-equidistributed.
This is a contradiction and shows that
$$
\Big(\tilde g(\x) \Gamma\Big)_
{\x \in \left[\left(\frac{T}{(Wq)^{t-1}}\right)^{1/t}\right]^t}
$$
is in fact totally $\delta(N)^{E/C}$-equidistributed.
\end{proof}

The next lemma is in spirit closely related to the previous one. 
It shows that the assumptions the previous lemma makes on the polynomial 
sequence $g$ imply that these assumptions, with a slightly different constant 
$E$, are also met by any sequence of the form $g \circ L$ for certain linear 
polynomials $L$.
This result will allow us to replace $g$ by $g \circ L$ in the conclusion of Lemma
\ref{l:poly-subsequences} and thus to easily deal with a necessary restriction  
to subprogressions in Section \ref{s:proof}.
We note aside that this result generalises to higher degree polynomials.

\begin{lemma} \label{l:weaker-assumptions}
Let $N$ and $T$ be positive integers and suppose that $T=T(N)$ satisfies
$N^{1 - \eps} \ll_{\eps} T \leq N$ for all $\eps > 0$.
Let $\delta: \NN \to (0,1/2)$ be a function that satisfies 
$\delta(x)^{-1} \ll_{\eps} x^{\eps}$ for all $\eps > 0$.
Let $m_G$ and $\ell$ be positive integers and suppose that $G/\Gamma$ is 
an $m_G$-dimensional nilmanifold equipped with a $\frac{1}{\delta(N)}$-rational 
Mal'cev basis $\mathcal{X}$ adapted to some filtration $G_{\bullet}$ of degree 
$\ell$. 
Let $g \in \mathrm{poly}(\ZZ, G_{\bullet})$ be any polynomial sequence and let
$S:\NN \to \NN$ be a function such that $S(x) \ll_{\eps} x^{\eps}$ for 
all $\eps>0$.

Then there is a constant $1 \leq C \ll_{m_G,\ell} 1$ such that the following 
holds.
Let $E>C$ and suppose that for every $w(N)$-smooth integer $\tilde q \leq S(N)$ 
the sequence $(g(\tilde qn)\Gamma)_{n \leq N/\tilde q}$ is totally 
$\delta(N)^E$-equidistributed.
Let $L(m)=am+b$ be a linear polynomial with $0\leq b < a$ and a $w(N)$-smooth 
leading constant $a$, and let $q$ be a $w(N)$-smooth integer such that 
$qa \leq S(N)^{1/\ell^{\ell + 1}}$.
Then the finite sequence
$$
(g(aqm +b))\Gamma)_{m \leq T/(aq)}
$$
is totally $\delta(N)^{E/C}$-equidistributed.
\end{lemma}
\begin{proof}
Assuming that all conditions of \cite[Proposition 15.4]{lmr} are 
satisfied, this result, applied with $P=L$, will guarantee the existence of 
a $w(N)$-smooth integer $\tilde q$ and a constant $C= O_{m_G,\ell}(1)$ 
such that for every 
$0\leq r < \tilde q$ the sequence
$$
(g(aq (\tilde q m + r) +b))\Gamma)_{m \leq T/(aq\tilde q)}
$$
is totally $\delta(N)^{E/C}$-equidistributed.
This, however, implies that the sequence 
$$
(g(aqm +b))\Gamma)_{m \leq T/(aq)}
$$
itself is totally $\delta(N)^{E/C}$-equidistributed,
which will prove the lemma.

It remains to check that all conditions are satisfied.
The integer $\tilde q$ produced by \cite[Proposition 15.4]{lmr} comes from an 
application of \cite[Proposition 15.2]{lmr}.
The proof of the latter proposition reveals that we can take 
$\tilde q = (aq)^{C'}$ for some positive integer $C' = O_{\ell}(1)$. 
It is moreover possible to read of an explicit upper bound of the form 
$C' \leq \ell^{\ell + 1}$; 
c.f. the lines before \cite[equation (15.4)]{lmr} where $t$ is introduced and 
note that $C'$ corresponds to the quantity $td$.
In order for the proof of \cite[Proposition 15.4]{lmr} including its application 
of \cite[Proposition 15.2]{lmr} to work in the setting of the current lemma, it 
suffices to know that for every $w(N)$-smooth integers $q' \leq (aq)^{C'}$ the 
sequence $(g(q'm)\Gamma)_{m \leq T/q'}$ is totally 
$\delta(N)^{E}$-equidistributed. 
This, however, is guaranteed by the assumption that
$a q \leq S(N)^{1/\ell^{\ell + 1}}$.
\end{proof}

The following lemma will be used to carry out an inclusion-exclusion argument 
that allows us to reduce estimates involving $R^*_S$ to estimates involving $R$.
\begin{lemma}\label{l:d-scaling}
Let $N$ and $T$ be positive integers and suppose that $T=T(N)$ satisfies
$N^{1 - \eps} \ll_{\eps} T \leq N$ for all $\eps > 0$.
Suppose that $\delta: \NN \to (0,1/2)$ satisfies 
$$
\delta(x)^{-1} \asymp (\log w(x))^{C}
$$
for some positive constant $C$.
Let $m, \ell$ and $t$ be positive integers and suppose that $G/\Gamma$ is an 
$m$-dimensional nilmanifold equipped with a $\frac{1}{\delta(N)}$-rational 
Mal'cev basis $\mathcal{X}$ that is adapted to some filtration $G_{\bullet}$ of 
degree $\ell$.
Further, let $g \in \mathrm{poly}(\ZZ^t, G_{\bullet})$ be a polynomial sequence.
Given any integer $d \geq 1$, let $\x_d \in \{0, \dots, d^2\}^t$ be a fixed 
vector. 

Then there are constants $C_0 > 2t$ and $E_0>1$, both of order 
$O_{m,\ell, t}(1)$, such that, provided $N \gg_{m,\ell, t} 1$ is 
sufficiently large, the following holds for every $E>E_0$:

Suppose that $(g(\bn)\Gamma)_{\bn \in [T^{1/t}]^t}$ is totally 
$\delta(N)^E$-equidistributed.
Then, for every integer $K$ such that $1< K <T^{1/C_0}$ all but
$o(\delta(N) \frac{K}{\log w(N)})$
of the sequences
$$(g(d^2\bn + \x_d)\Gamma)_{\bn \in [T^{1/t}d^{-2}]^t }$$
for $d \in \{n \in [K,2K): \gcd(n,W(N))=1\}$ are totally 
$\delta(N)^{E/E_0}$-equidistributed. 
\end{lemma}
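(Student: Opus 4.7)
The plan is to argue by contradiction, in close analogy with the proof of Lemma~\ref{l:poly-subsequences}. Suppose the conclusion fails and let
$\mathcal D\subseteq\{d\in[K,2K):\gcd(d,W(T))=1\}$
of cardinality $|\mathcal D|\gg\delta(N)K/\log w(N)$ witness failure, so each sequence $(g(d^2\bn+\x_d)\Gamma)_{\bn\in[N^{1/t}d^{-2}]^t}$ for $d\in\mathcal D$ fails to be totally $\delta(N)^{E/E_0}$-equidistributed, with $E_0=O_{m,\ell,t}(1)$ to be fixed. For each such $d$, combining Lemma~\ref{l:totaleq/eq} with Theorem~\ref{t:GT-eq} produces a non-trivial horizontal character $\eta_d$ on $G/\Gamma$ with $|\eta_d|\ll\delta(N)^{-O_{m,\ell,t}(E/E_0)}$ satisfying
\[
\|\eta_d\circ g(d^2\bn+\x_d)\|_{C^\infty_\ast[N^{1/t}d^{-2}]^t}\ll\delta(N)^{-O_{m,\ell,t}(E/E_0)}.
\]

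I would then pigeonhole over the $\eta_d$. The hypothesis $\delta(N)^{-1}\asymp(\log w(N))^C$ ensures that the number of horizontal characters of the relevant modulus is only polylogarithmic in $N$, while $|\mathcal D|\gg K/(\log w(N))^{O(1)}$; hence some single character $\eta$ occurs for a subset $\mathcal D'\subseteq\mathcal D$ of size $\gg K/(\log w(N))^{O_{m,\ell,t}(E/E_0)}$. Since elements of $\mathcal D'$ are already coprime to $W(T)$, two non-coprime elements of $\mathcal D'$ must share a prime $>w(T)$; the number of such bad pairs is at most $4K^2\sum_{p>w(T)}p^{-2}\ll K^2/(w(T)\log w(T))$, negligible compared to $|\mathcal D'|^2$ once $N$ is large. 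This extracts two coprime $d_1,d_2\in\mathcal D'$ associated to the common $\eta$. (For $K$ below the polylogarithmic threshold used in this pigeonholing step, the conclusion of the lemma is vacuous.)

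Write $\eta\circ g(\bn)=\sum_{|\mathbf{i}|\le t\ell}\beta_{\mathbf{i}}\bn^{\mathbf{i}}$. For each $j\in\{1,2\}$ the coefficient of $\bn^{\mathbf{i}}$ in $\eta\circ g(d_j^2\bn+\x_{d_j})$ is $d_j^{2|\mathbf{i}|}(\beta_{\mathbf{i}}+L_{\mathbf{i}}((\beta_{\mathbf{i}'})_{|\mathbf{i}'|>|\mathbf{i}|};\x_{d_j}))$ for a fixed polynomial $L_{\mathbf{i}}$ depending only on strictly higher-order coefficients and on $\x_{d_j}$. I would proceed by downward induction on $|\mathbf{i}|$, starting at $|\mathbf{i}|=t\ell$ where $L_{\mathbf{i}}=0$. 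The smoothness estimates yield
\[
\|d_j^{2|\mathbf{i}|}(\beta_{\mathbf{i}}+L_{\mathbf{i}}(\,\cdot\,;\x_{d_j}))\|\ll\delta(N)^{-O(E/E_0)}(d_j^2/N^{1/t})^{|\mathbf{i}|},
\]
so for each $j$ the quantity $\beta_{\mathbf{i}}+L_{\mathbf{i}}(\,\cdot\,;\x_{d_j})$ is within $\delta(N)^{-O(E/E_0)}N^{-|\mathbf{i}|/t}$ of a rational with denominator $d_j^{2|\mathbf{i}|}$. With $C_0=O_{m,\ell,t}(1)$ chosen large enough that $\delta(N)^{-O(E/E_0)}(d_1d_2)^{2|\mathbf{i}|}N^{-|\mathbf{i}|/t}<1/2$ (which holds for $K<N^{1/C_0}$ and $N$ sufficiently large, since $\delta$ is polylogarithmic), the standard Bézout argument collapses these two rational approximations and yields $\|\beta_{\mathbf{i}}+L_{\mathbf{i}}(\,\cdot\,;\x_{d_j})\|\ll\delta(N)^{-O(E/E_0)}N^{-|\mathbf{i}|/t}$. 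Subtracting the two relations cancels $\beta_{\mathbf{i}}$; the inductive control on the $\beta_{\mathbf{i}'}$ with $|\mathbf{i}'|>|\mathbf{i}|$ combined with the crude bound $|\x_{d_j}|\le d_j^2\ll K^2$ isolates $\|\beta_{\mathbf{i}}\|\ll\delta(N)^{-O(E/E_0)}N^{-|\mathbf{i}|/t}$. Summing over $\mathbf{i}$ delivers $\|\eta\circ g\|_{C^\infty_\ast[N^{1/t}]^t}\ll\delta(N)^{-O(E/E_0)}$, which via Theorem~\ref{t:GT-eq} together with Lemma~\ref{l:totaleq/eq} contradicts the assumed total $\delta(N)^E$-equidistribution of $(g(\bn)\Gamma)_{\bn\in[N^{1/t}]^t}$ once $E_0$ is taken large enough relative to all implied constants.

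The main obstacle is the inductive step: at each level, the correction polynomial $L_{\mathbf{i}}$ couples $\beta_{\mathbf{i}}$ to the strictly higher-order $\beta_{\mathbf{i}'}$ via a shift of size up to $K^2$, and the accumulating powers of $K$ together with the denominators $(d_1d_2)^{2|\mathbf{i}|}\le K^{4t\ell}$ arising in the Bézout step must be absorbed by the gain $N^{-|\mathbf{i}|/t}$ granted by $K<N^{1/C_0}$. It is precisely this balance that fixes the lower bound on $C_0$ in the statement.
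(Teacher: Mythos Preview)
Your approach is correct in essence but genuinely different from the paper's. Two minor slips: the top degree of $\eta\circ g$ is $\ell$, not $t\ell$ (since $g\in\mathrm{poly}(\ZZ^t,G_\bullet)$ with $G_\bullet$ of degree $\ell$); and in the inductive step the B\'ezout argument cannot be applied directly to the two \emph{different} quantities $\alpha_j=\beta_{\mathbf i}+L_{\mathbf i}(\cdot;\x_{d_j})$. You must first invoke the induction hypothesis to see that $\|L_{\mathbf i}(\cdot;\x_{d_j})\|\ll\delta^{-O}N^{-|\mathbf i|/t}$ (using $K^2<N^{1/t}$), hence $\|d_j^{2|\mathbf i|}\beta_{\mathbf i}\|\ll\delta^{-O}K^{2|\mathbf i|}N^{-|\mathbf i|/t}$ for the \emph{same} $\beta_{\mathbf i}$, and only then does the coprime--denominator argument force $p_1/d_1^{2|\mathbf i|}$ and $p_2/d_2^{2|\mathbf i|}$ to be integers, giving $\|\beta_{\mathbf i}\|\ll\delta^{-O}N^{-|\mathbf i|/t}$ with no $K$--loss. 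With this reordering the induction closes and your contradiction goes through.

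The paper proceeds quite differently. After pigeonholing to a common $\eta$ it keeps the full supply of $\gg\delta(N)^{O(B)}K$ values of $d$, runs the same downward induction to obtain $\|\beta_{\mathbf i}\,d^{\ell^2}\|\ll\delta^{-O}d^{\ell^2}N^{-|\mathbf i|/t}$ for each such $d$, and then appeals to two black boxes from \cite{GT-nilmobius}: a Waring--type lemma turning many $d^{\ell^2}$ into $\gg\delta^{O}K^{\ell^2}$ integers $n\le 10^{\ell^2}K^{\ell^2}$ with $\|\beta_{\mathbf i}n\|$ small, followed by a strong recurrence lemma extracting a single $q\ll\delta^{-O}$ with $\|q\beta_{\mathbf i}\|\ll\delta^{-O}N^{-|\mathbf i|/t}$. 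Your route is more elementary---it bypasses both auxiliary lemmas---but it genuinely exploits the hypothesis $\gcd(d,W(T))=1$ to locate a coprime pair inside the exceptional set; the paper's argument would apply to any collection of $d$'s of the stated density in $[K,2K)$, without any arithmetic restriction on the $d$'s themselves.
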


\begin{proof}
Let $W=W(N)$ and recall that
$\prod_{p<w(N)}(1-p^{-1}) \asymp \frac{1}{\log w(N)}$.
Let us abbreviate
$$
g_d(\x) = g(d^2\x + \x_d).
$$
Let $B>1$ and suppose $B=E/E_0$, with $E_0$ to be defined at the end of the proof.
Suppose further that there is some $K$, $1 < K < N^{1/C_0}$ such that  
$\gg \delta(N)\frac{K}{\log w(N)}$ of the integers $d \in [K,2K)$ with 
$\gcd(d,W(N))=1$ are exceptional.
In each of these cases, Lemma \ref{l:totaleq/eq} and Theorem \ref{t:GT-eq} 
imply that there is a non-trivial horizontal character 
$\eta_d:G/\Gamma \to \CC$ of modulus 
$|\eta_d| \ll \delta(N)^{-O_{m,\ell,t}(B)}$ such that
\begin{equation}\label{eq:smooth-d}
\|\eta_d \circ g_d\|_{C_*^{\infty}[T^{1/t}d^{-2}]^t} \ll 
\delta(N)^{-O_{m,\ell,t}(B)}. 
\end{equation}
By the pigeonhole principle, we find some $\eta$ such that $\eta_d = \eta$ for 
at least
$$\gg \delta(N)^{O_{m,\ell,t}(B)} \frac{K}{\log w(N)} $$ 
of the exceptional values of $d$.
Suppose $\eta \circ g$ has the representation
$$
\eta \circ g (\x) =
\sum_{\substack{i_1, \dots, i_t \geq 0 \\ i_1 + \dots + i_t \leq \ell}}
\beta_{i_1, \dots, i_t} x_1^{i_1} \dots x_t^{i_t}.
$$
Writing
$$
\eta \circ g_d (\x) =
\eta \circ g (d^2\x+ \x_d) =
\sum_{\substack{i_1, \dots, i_t \geq 0 \\ i_1 + \dots + i_t \leq \ell}}
\alpha^{(d)}_{i_1, \dots, i_t} x_1^{i_1} \dots x_t^{i_t},
$$
the bound \eqref{eq:smooth-d} translates to
\begin{equation}\label{eq:3.7-sup}
\sup_{(i_1 \dots, i_t) \not= \0} 
\left(\frac{T^{1/t}}{d^{2}}\right)^{i_1 + \dots + i_t}
\| \alpha^{(d)}_{i_1, \dots, i_t} \|
\ll \delta^{-O_{m,\ell,t}(B)}.
\end{equation}
Note that every coefficient $\alpha^{(d)}_{i_1, \dots, i_t}$ can be expressed in 
terms of coefficients $\beta_{j_1, \dots, j_t}$ as follows:
$$
\alpha^{(d)}_{i_1, \dots, i_t} = 
d^{2(i_1 + \dots + i_t)} \beta_{i_1, \dots, i_t} + 
\sum_{j_1 + \dots + j_t > i_1 +\dots + i_t} 
c_{\mathbf{i}, \mathbf{j}} 
\beta_{j_1, \dots, j_t},
$$
with $\mathbf{i}= (i_1, \dots,i_t)$, $\mathbf{j}= (j_1, \dots,j_t)$ and integral 
coefficients $c_{\mathbf{i}, \mathbf{j}}$ 
of order $O(d^{2(i_1 + \dots + i_t)})$.
Similarly as in the proof of Lemma \ref{l:poly-subsequences}, these identities 
allow us to deduce downward-inductively information on the coefficients 
$\beta_{j_1, \dots, j_t}$ 
from \eqref{eq:3.7-sup}: 

If $i_1 + \dots + i_t = \ell$, then we immediately have
$$
\|\beta_{i_1, \dots, i_t} d^{2 \ell} \| 
\ll \delta(N)^{-O_{m,\ell,t}(B)} d^{2\ell}T^{-\ell/t} .
$$
In general, we obtain
$$
\|
 \beta_{i_1, \dots, i_t} 
 d^{2 (\ell + (\ell - 1) + \dots + (i_1 + \dots + i_t))} 
\| 
\ll \delta(N)^{-O_{m,\ell,t}(B)}  
d^{2 (\ell + (\ell - 1) + \dots + (i_1 + \dots + i_t ))}
T^{-(i_1 + \dots + i_t)/t}.
$$
Thus,
$$
\|\beta_{i_1, \dots, i_t} d^{k} \| 
\ll \delta(N)^{-O_{m,\ell,t}(B)} d^k T^{-(i_1 + \dots + i_t)/t},
$$
for $k= \ell^2$.
The above bound holds for 
$\gg \delta(N)^{O_{m,\ell,t}(B)} \frac{K}{\log w(N)}
\gg \delta(N)^{O_{m,\ell,t}(B)} K$ 
values of $d \in [K,2K)$.
Employing the Waring-type result given in \cite[Lemma 3.3]{GT-nilmobius}, we 
deduce that there are at least
$\gg_{\ell} \delta(N)^{O_{\ell,t,m}(B)} K^{k}$
positive integers $n \leq 10^k K^k$ such that
$$
\|\beta_{i_1, \dots, i_t} n \| 
\ll \delta(N)^{-O_{m,\ell,t}(B)} K^k T^{-(i_1 + \dots + i_t)/t}.
$$
Setting $C_0 = 4tk$, so that $K^k \ll T^{1/2t}$, we deduce from the strong 
recurrence lemma recorded in \cite[Lemma 3.4]{GT-nilmobius} that 
for each $\beta_{i_1, \dots, i_t}$ there is a non-zero integer
$$
q_{i_1, \dots, i_t} \ll 
\delta(N)^{-O_{ m,\ell,t}(B)}
$$ 
such that 
$$\|q_{i_1, \dots, i_t} \beta_{i_1, \dots, i_t} \| 
\ll \delta(N)^{-O_{m,\ell,t}(B)} 
T^{-(i_1 + \dots + i_t)/t}.
$$
Let $q$ be the least common multiple of all the $q_{i_1, \dots, i_t}$.
Then $q \eta$ is a non-trivial horizontal character of modulus 
$|q \eta| \ll \delta(N)^{-O_{m, \ell,t}(B)}$
with the property that 
$$
\|q \eta \circ g\|_{C_*^{\infty}[T^{1/t}]^t} 
\ll \delta(N)^{-O_{m, \ell,t}(B)}.
$$
Recall that $B=E/E_0$. 
Choosing $E_0$ sufficiently large with respect to $m, \ell$ and $t$, we deduce 
from Theorem \ref{t:GT-eq} and Lemma \ref{l:totaleq/eq} that 
$(g(\bn)\Gamma)_{\bn \in [T^{1/t}]^t}$ fails to be totally 
$\delta(N)^{E}$-equidistributed, which is a contradiction.
\end{proof}

The final lemma of this section states the following. 
Given a multiparameter sequence, then we obtain a natural collection of 
one-parameter sequences by fixing all but one of the parameters. 
The lemma shows that if the multiparameter sequence is equidistributed then so 
are almost all of the one-parameter sequences from this collection.

\begin{lemma}\label{l:multi-to-one}
Let $m,t,\ell, N$ and $T$ be positive integers. 
Suppose that $N^{1 - \eps} \ll_{\eps} T \leq N$ for all $\eps > 0$, and
let $\delta: \NN \to (0,1/2)$ be 
such that $\delta(x)^{-1}\ll_{\eps} x^{\eps}$ for all $\eps>0$.
Let $G/\Gamma$ be an $m$-dimensional nilmanifold together with a 
$\frac{1}{\delta(N)}$-rational Mal'cev basis adapted to some filtration 
$G_{\bullet}$ of degree $\ell$.
Suppose that $g\in \mathrm{poly}(\ZZ^t, G_{\bullet})$. 
Any fixed choice of integers $a_1, \dots, a_{t-1}$ 
gives rise to an element $g_{a_1, \dots, a_{t-1}}$ of 
$\mathrm{poly}(\ZZ, G_{\bullet})$ by setting 
$g_{a_1, \dots, a_{t-1}}(n)= g(a_1 \dots, a_{t-1}, n)$.
Then there is a constant $1 \leq C \ll_{m,t,\ell} 1$ such that the 
following holds for all $E>C$, provided $T$ is sufficiently large.

If $(g(\bn)\Gamma)_{n \in [T^{1/t}]^t}$ is 
$\delta(N)^E$-equidistributed, then
$$(g_{a_1,\dots,a_{t-1}}(n)\Gamma)_{n \leq T^{1/t}}$$
is totally $\delta^{E/C}(N)$-equidistributed for all but 
$o(\delta(N)^{O_{m,t,\ell}(E/C)} T^{\frac{t-1}{t}})$ choices of 
$$1 \leq a_1, \dots, a_{t-1} \leq T^{1/t}.$$ 
\end{lemma}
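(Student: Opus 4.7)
The plan is to argue by contradiction, supposing that the conclusion fails and deriving a violation of the hypothesised $\delta(N)^E$-equidistribution of $(g(\bn)\Gamma)_{\bn \in [N^{1/t}]^t}$. For each exceptional tuple $\ba = (a_1, \dots, a_{t-1})$, Lemma~\ref{l:totaleq/eq} converts the failure of total $\delta(N)^{E/C}$-equidistribution of $g_\ba$ into the failure of ordinary $\delta(N)^{E/(CC_1)}$-equidistribution for some $C_1 \ll_{m,\ell} 1$. Theorem~\ref{t:GT-eq} then supplies a non-trivial horizontal character $\eta_\ba$ on $G/\Gamma$ of modulus $|\eta_\ba| \ll \delta(N)^{-O_{m,\ell}(E/C)}$ such that
$$\|\eta_\ba \circ g_\ba\|_{C_*^\infty[N^{1/t}]} \ll \delta(N)^{-O_{m,\ell}(E/C)}.$$
Since there are at most $\delta(N)^{-O_{m}(E/C)}$ horizontal characters of this modulus, pigeonholing over the exceptional set isolates a single $\eta$ which serves as $\eta_\ba$ for a proportion $\gg \delta(N)^{O_{m,\ell,t}(E/C)}$ of the assumed $\gg \delta(N)^{O_{m,t,\ell}(E/C)}N^{(t-1)/t}$ bad tuples.

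Next, expand $\eta \circ g$ as a multivariate polynomial
$$\eta \circ g(\x) = \sum_{i_1+\cdots+i_t \leq \ell} \beta_{i_1,\dots,i_t}\, x_1^{i_1}\cdots x_t^{i_t},$$
so that, grouping by $j = i_t$, the smoothness estimate on $\eta \circ g_\ba$ becomes
$$(N^{1/t})^{j}\,\Big\|\sum_{i_1+\cdots+i_{t-1}\leq \ell - j} \beta_{i_1,\dots,i_{t-1},j}\, a_1^{i_1}\cdots a_{t-1}^{i_{t-1}}\Big\| \ll \delta(N)^{-O_{m,\ell,t}(E/C)}$$
for every $0 \leq j \leq \ell$ and every $\ba$ in the pigeonholed subfamily. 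The strategy now mirrors the inductive mechanism of Lemmas~\ref{l:poly-subsequences} and~\ref{l:d-scaling}: for each fixed $j$, starting from the top total degree, I descend through the tuples $(i_1,\dots,i_{t-1})$, at each stage subtracting off contributions already controlled to isolate a single $\beta_{i_1,\dots,i_{t-1},j}\,a_1^{i_1}\cdots a_{t-1}^{i_{t-1}}$.

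To convert these polynomial near-integrality statements in $\ba$ into smoothness bounds for individual coefficients $\beta_{\bi}$, I iterate the Waring-type recurrence results \cite[Lemmas~3.3 and~3.4]{GT-nilmobius} precisely as in the proof of Lemma~\ref{l:d-scaling}: each further application produces, at the cost of an $O_{m,\ell,t}(1)$-power loss in $\delta(N)$, an integer $q_{\bi} \ll \delta(N)^{-O_{m,\ell,t}(E/C)}$ with
$$\|q_{\bi}\, \beta_{\bi}\| \ll \delta(N)^{-O_{m,\ell,t}(E/C)}\, N^{-(i_1+\cdots+i_t)/t}.$$
Taking $q$ to be the least common multiple of the finitely many $q_{\bi}$, the character $q\eta$ is a non-trivial horizontal character of modulus $\ll \delta(N)^{-O_{m,\ell,t}(E/C)}$ satisfying
$$\|q\eta \circ g\|_{C_*^\infty[N^{1/t}]^t} \ll \delta(N)^{-O_{m,\ell,t}(E/C)}.$$
Choosing $C = C(m,\ell,t)$ sufficiently large and invoking Theorem~\ref{t:GT-eq} together with Lemma~\ref{l:totaleq/eq} in the reverse direction then contradicts the hypothesis that $(g(\bn)\Gamma)_{\bn \in [N^{1/t}]^t}$ is $\delta(N)^E$-equidistributed.

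The main obstacle is the inductive descent from smoothness bounds on the univariate slice polynomials to simultaneous smoothness bounds for all multivariate coefficients $\beta_{\bi}$ with a common bounded denominator $q$: one must manage the accumulating $\delta(N)$-losses through the $O_\ell(1)$ stages of downward induction on $(i_1,\dots,i_{t-1})$ and the Waring-type arguments, while exploiting that the pigeonholed family of $\ba$ has density $\gg \delta(N)^{O_{m,\ell,t}(E/C)}$ and lies in the box $[N^{1/t}]^{t-1}$ so that the strong recurrence lemma applies with admissible parameters. Once this bookkeeping is in place, the final application of Theorem~\ref{t:GT-eq} closes the argument.
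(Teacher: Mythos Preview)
Your overall strategy matches the paper's exactly: argue by contradiction, apply Lemma~\ref{l:totaleq/eq} and Theorem~\ref{t:GT-eq} to each bad tuple, pigeonhole to a single character $\eta$, extract smoothness bounds on the individual coefficients of $\eta\circ g$, and feed these back into Theorem~\ref{t:GT-eq}. The gap is in the extraction step. Your proposed ``downward induction through the tuples $(i_1,\dots,i_{t-1})$, at each stage subtracting off contributions already controlled to isolate a single $\beta_{i_1,\dots,i_{t-1},j}\,a_1^{i_1}\cdots a_{t-1}^{i_{t-1}}$'' does not work as stated. In Lemma~\ref{l:d-scaling} the analogous descent succeeds because there is only \emph{one} parameter $d$, so the monomials $d^{2k}$ are totally ordered by degree and the top one can be peeled off cleanly. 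Here the polynomial $P_j(a_1,\dots,a_{t-1})=\sum\beta_{i_1,\dots,i_{t-1},j}\,a_1^{i_1}\cdots a_{t-1}^{i_{t-1}}$ is genuinely multivariate: at each total degree there are several monomials, and knowing that their \emph{sum} is near-integral for many $(a_1,\dots,a_{t-1})$ does not let you subtract off any single one without already controlling the others at that level. The one-variable tools \cite[Lemmas~3.3 and 3.4]{GT-nilmobius} do not apply directly to a mixed monomial $\beta\,a_1^{i_1}\cdots a_{t-1}^{i_{t-1}}$, so the step is not ``precisely as in the proof of Lemma~\ref{l:d-scaling}''.

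The paper sidesteps this by invoking \cite[Proposition~2.2]{GT-erratum}, which is exactly the multivariate statement required: if $\|P_j(a_1,\dots,a_{t-1})\|\leq\varepsilon$ holds for a $\delta$-dense set of tuples in a box $[N^{1/t}]^{t-1}$, then there exists $Q_j\ll\delta^{-O(1)}$ with $\|Q_j P_j\|_{C^\infty_*[N^{1/t}]^{t-1}}\ll\delta^{-O(1)}\varepsilon$. Applied with $\varepsilon=\delta(N)^{-O(B)}N^{-j/t}$ this gives $\|Q_j\beta_{i_1,\dots,i_{t-1},j}\|\ll\delta(N)^{-O(B)}N^{-(i_1+\cdots+i_t)/t}$ for every multi-index at once, and the argument then closes with $q=Q_1\cdots Q_\ell$ just as you describe. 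A variable-by-variable iteration of the one-dimensional recurrence lemmas could in principle substitute for this citation, but carrying that out amounts to reproving Proposition~2.2; it is not mere bookkeeping.
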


\begin{proof}
Let $B>1$ denote the ratio $B=E/C$, with $C$ to be determined at the end of the 
proof. 
Suppose there are
$\gg \delta(N)^{O_{m,t,\ell}(B)} T^{(t-1)/t}$ tuples 
$(a_1, \dots, a_{t-1}) \in [1,T^{1/t}]^{t-1}$
for which $(g_{a_1,\dots,a_{t-1}}(n)\Gamma)_{n \leq T^{1/t}}$ fails to be totally
$\delta(N)^{B}$-equidistributed.

Applying Lemma~\ref{l:totaleq/eq} and Theorem~\ref{t:GT-eq}, we find non-trivial 
horizontal characters $\eta_{a_1, \dots, a_{t-1}}$ of modulus 
$\ll \delta(N)^{-O_{m,t,\ell}(B)}$ such that
\begin{equation}\label{eq:s-norm-again}
 \|\eta_{a_1, \dots, a_{t-1}} \circ g_{a_1, \dots, a_{t-1}} 
\|_{C^{\infty}_{*}[T^{1/t}]} \ll \delta(N)^{-O_{m,\ell,t}(B)}.
\end{equation}
By the pigeonhole principle there is some character $\eta$ such that
$\eta = \eta_{a_1, \dots, a_{t-1}}$ for at least 
$\gg \delta(N)^{O_{m,\ell,t}(B)} T^{(t-1)/t}$
of the exceptional tuples $(a_1, \dots, a_{t-1})$.
We continue to only consider this subset of exceptional $(t-1)$-tuples.

Suppose
$$
\eta \circ g(\bn) 
= \sum_{i_1, \dots, i_t} \gamma_{i_1, \dots, i_t} n_1^{i_1} \dots n_t^{i_t}.
$$
Then 
$$
\eta \circ g_{a_1,\dots,a_{t-1}}(n) 
= 
\sum_{j=1}^{\ell} n^j
\sum_{\substack{i_1+ \dots+ i_t \\ \leq \ell t -j}} 
a_1^{i_1} \dots a_{t-1}^{i_{t-1}}
\gamma_{i_1, \dots, i_{t-1}, j}
$$
and for any of the exceptional tuples from above, \eqref{eq:s-norm-again} 
translates to
\begin{equation}\label{eq:s-norm'}
\sup_{1 \leq j \leq \ell} T^{j/t} 
\Big\|
 \sum_{\substack{i_1+ \dots+ i_t \\ \leq \ell t -j}} 
 a_1^{i_1} \dots a_{t-1}^{i_{t-1}}
 \gamma_{i_1, \dots, i_{t-1}, j}
\Big\| \ll \delta(N)^{-O_{m,t,\ell}(B)}. 
\end{equation}
Each of the coefficients of $\eta \circ g_{a_1,\dots,a_{t-1}}(n)$ can be 
regarded as a polynomial in $t-1$ variables that is evaluated at the point
$(a_1, \dots, a_{t-1})$.
These polynomials take the form
$$P_{j} (n_1, \dots, n_{t-1})
=  
\sum_{\substack{i_1+ \dots+ i_t \\ \leq \ell t -j}} 
n_1^{i_1} \dots n_{t-1}^{i_{t-1}}
\gamma_{i_1, \dots, i_{t-1}, j} .
$$
The bounds \eqref{eq:s-norm'} show that to each of these polynomials 
\cite[Proposition 2.2]{GT-erratum} applies with 
$\eps=\delta(N)^{-O_{m,t,\ell}(B)} T^{-j/t}$.
Thus there is $Q_{j} \ll \delta(N)^{-O_{m,t,\ell}(B)}$ such that
$$
\|Q_j P_{j} \|_{C_*^{\infty}[T^{1/t}]^{t-1}}
=
\sup_{1\leq i_1+ \dots+ i_{t-1} \leq \ell t -j} 
T^{(i_1 + \dots + i_{t-1})/t} 
\| Q_{j} \gamma_{i_1, \dots, i_{t-1}, j}\| 
\ll \delta(N)^{-O_{m,t,\ell}(B)} T^{-j/t},
$$
and hence
$$
\sup_{1\leq i_1+ \dots+ i_{t-1} \leq \ell t -j} 
T^{(i_1 + \dots + i_{t-1} + j)/t}  
\| Q_j \gamma_{i_1, \dots, i_{t-1},j}\| \ll \delta(N)^{-O_{m,t,\ell}(B)}.
$$
Since $\delta(x)^{-1} \ll_{\eps} x^{\eps}$ and hence
$\| Q_j \gamma_{i_1, \dots, i_{t-1},j}\| = o(1)$,
we can introduce a factor 
$Q_{1} \dots Q_{\ell}/Q_j$ into the latter expression and deduce 
that
$$
\sup_{1\leq i_1+ \dots+ i_t \leq \ell t} 
T^{(i_1 + \dots + i_{t})/t} 
\| Q_{1} \dots Q_{\ell} \gamma_{i_1, \dots, i_t}\| \ll 
\delta(N)^{-O_{m,t,\ell}(B)},
$$
provided $T$ and $N$ are sufficiently large.
For $B=E/C$ with $C>1$ sufficiently large depending only on $m,t$ and $\ell$, the 
latter bound implies in view of Theorem \ref{t:GT-eq} that 
$(g(\bn)\Gamma)_{\bn \in [T^{1/t}]^t}$ fails to be 
$\delta(N)^{E}$-equidistributed. 
This contradicts our assumptions and completes the proof.
\end{proof}

\section{Proof of Theorem \ref{p:nilsequences}} \label{s:proof}
The general strategy for proving results like Theorem \ref{p:nilsequences} is 
to deduce them from the special case in which the nilsequence involved is 
equidistributed.
This strategy was introduced in \cite[\S2]{GT-nilmobius}.
The transition from the equidistributed statement to the general one is 
achieved through an application of the factorisation theorem for nilsequences, 
c.f.\ \cite[Theorem 1.19]{GT-polyorbits}, or a consequence thereof.
For technical reasons we require a factorisation result of the form given in
\cite[Theorem 3]{lmf}, which is a slight generalisation of
\cite[Proposition 16.4]{lmr} and arises by iterative application of 
\cite[Theorem 1.19]{GT-polyorbits}.
Combining this factorisation theorem with the major arc estimate given in 
Lemma \ref{l:major-arc}, we will deduce Theorem \ref{p:nilsequences} from the 
following adaptation of \cite[Proposition 6.4]{bm} to the square-free version of 
$R$.   

\begin{proposition}\label{p:nils-equi}
Let $N$ and $T=T(N)$ be positive integers such that
$N^{1-\eps} \ll_{\eps} T \leq N$ for all $\eps > 0$.
Let $\epsilon\in \{\pm\}$, and let $W=W(N)$. 
Let $S$ be a finite set of primes, all bounded by $w(N)$, and let $A$ be an 
integer such that $A \Mod{W} \in \cA(R^*_S,N)$ and $0 \leq \epsilon A < W$.
Suppose that $\delta:\ZZ \to (0,1/2)$ satisfies 
$\delta^{-1}(x) \asymp (\log w(x))^{C}$ for some positive constant $C$.
Assume that $(G/\Gamma, d_{\mathcal X})$ is an $m_G$-dimensional
nilmanifold with a filtration $G_{\bullet}$ of degree $\ell$ adapted to a 
$\frac{1}{\delta(N)}$-rational Mal'cev basis. 
Let $g \in \mathrm{poly}(\ZZ,G_{\bullet})$. 
Let $S:\NN \to \NN$ be such that $S(x) \ll_{\eps} x^{\eps}$ for all $\eps >0$.
Finally, let $E>0$ and suppose that for every $w(N)$-smooth number 
$\tilde q \leq S(N)$ the finite sequence 
$(g(\tilde q m)\Gamma)_{0<m\leq T/\tilde q}$ is totally 
$\delta(N)^E$-equidistributed in $G/\Gamma$. 

Then the following holds.
There exists a constant $E_0>1$, only depending on $m_G,\ell$ and $n:=[K:\QQ]$ 
such that for every Lipschitz function $F:G/\Gamma \to [-1,1]$ of mean value
$\int_{G/\Gamma}F=0$, 
for every $w(N)$-smooth number $q>0$ such that $(Wq)^{n\ell^2} \leq S(N)$, and
for every $0\leq b < q$ we have
\begin{align*}
\bigg|\frac{1}{T}\sum_{0< \epsilon m \leq T} &R^*(W (qm+b) + A) 
F(g(|m|)\Gamma)\bigg| \\
&\ll_{m_G, \ell, n} 
\delta(N)^{E/E_0} (1+\|F\|_{\mathrm{Lip}}) 
\frac{\rho(W,A)}{W^{n-1}}
\prod_{p>w(N)} 
\bigg( 1 - \frac{\rho(p^2,0)}{p^2}\bigg)
\end{align*}
as $N \to \infty$.
\end{proposition}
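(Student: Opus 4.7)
The plan is to reduce the claim to the analogous non-correlation estimate for the unrestricted representation function $R$, namely \cite[Proposition 6.3]{bm}, by an inclusion-exclusion. For any $n = W(qm+b) + A$ with $A \bmod W \in \cA(R^*_S,T)$ and $q$ a $w(T)$-smooth positive integer, the definition of $\cA(R^*_S,T)$ together with $v_p(W) \geq 2$ for $p < w(T)$ forces $v_p(n) = v_p(A)$ at every $p \leq w(T)$, and this exponent is at most $1$ whenever $p \notin S$. Consequently the restriction to integers square-free outside $S$ only requires sieving at primes $p \geq w(T)$, so that
$$
R^*_S(n) = R(n) \sum_{\substack{d^2 \mid n \\ \gcd(d, W) = 1}} \mu(d).
$$
Substituting this into the sum and interchanging orders of summation yields $\sum_d \mu(d)\,\Sigma_d$, where $\Sigma_d$ restricts the $m$-sum by the condition $d^2 \mid W(qm+b) + A$. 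Since $\gcd(d,W) = 1$, and hence (by smoothness of $q$) also $\gcd(d,q) = 1$, this congruence has a unique solution $m \equiv m_0 \Mod{d^2}$. The change of variables $m = d^2 m' + m_0$, together with the definition $g_d(m') := g(d^2 m' + m_0)$, recasts $\Sigma_d$ as
$$
\Sigma_d = \sum_{0 < \epsilon m' \leq T''/d^2} R\bigl( W q d^2 m' + W(qm_0 + b) + A \bigr)\, F(g_d(m')\Gamma).
$$

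Next, I would truncate the $d$-summation at $d \leq K := T^{1/C_0}$ for some large constant $C_0$. The tail $d > K$ is controlled exactly as in the proof of Lemma~\ref{l:R-mean-value}, using the divisor bound $R(n) \ll \tau(n)^n$, the estimate \eqref{eq:rho(d^2,0)-bound}, and $\|F\|_{\infty} \leq 1$; it contributes $O(T''\, T^{-\eta})$ for some $\eta > 0$, which is negligible after normalising by $T''$.

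For $d \leq K$ with $\gcd(d,W) = 1$, the goal is to apply \cite[Proposition 6.3]{bm} to $\Sigma_d$ with modulus $q d^2$ (which respects the size constraint $(Wq)^{n \ell^2} \leq S(T)$ provided $C_0$ is chosen sufficiently large). The equidistribution hypothesis of that proposition, applied to $g_d$ in place of $g$, requires that $(g_d(\tilde q n)\Gamma)_{n \leq T''/(\tilde q d^2)}$ be totally $\delta(T)^{E'}$-equidistributed for every $w(T)$-smooth $\tilde q$ in a suitable range, with $E'$ only slightly smaller than $E$. This is supplied by Lemma~\ref{l:d-scaling}: decomposing $d \leq K$ dyadically and applying the lemma (with $t=1$) to each sequence $(g(\tilde q n))_n$ separately, we obtain that for all but an $o(\delta(T) D / \log w(T))$-fraction of admissible $d \in [D, 2D)$, the scaled sequence $g_d(\tilde q \cdot)$ is sufficiently equidistributed; a union bound over the bounded family of relevant $\tilde q$ keeps the exceptional set small. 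For each good $d$, \cite[Proposition 6.3]{bm} yields the target bound on $\Sigma_d$, while for the bad $d$ we fall back on the trivial estimate furnished by Lemma~\ref{l:R-mean-value}. Summing over $d$ and invoking the convergence of $\sum_{\gcd(d,W)=1} \mu^2(d)\rho(d^2,0)/d^{2n}$, established as in \eqref{eq:1.8}, delivers the stated estimate.

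The main obstacle is the equidistribution transfer in the third step: dilating the argument of $g$ by $d^2$ distorts the coefficients of the underlying polynomial, and controlling the resulting loss uniformly in $d$ and in the auxiliary smooth $\tilde q$, while keeping the exceptional set of $d$ small enough to be absorbed into the error, is delicate. This is precisely the content of the quantitative equidistribution analysis of Section~\ref{s:technical}, and the rational recurrence/horizontal character argument underlying Lemma~\ref{l:d-scaling} is what makes the transfer possible.
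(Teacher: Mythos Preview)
Your reduction to \cite[Proposition 6.4]{bm} via inclusion--exclusion does not go through as a black-box application. The obstruction is in transferring the equidistribution hypothesis to $g_d$. To invoke \cite[Proposition 6.4]{bm} with sequence $g_d$ and modulus $q$, one must know that $(g_d(\tilde q m)\Gamma)_m=(g(\tilde q d^2 m+m_0)\Gamma)_m$ is totally $\delta(T)^{E'}$-equidistributed for \emph{every} $w(T)$-smooth $\tilde q\leq S(T)$. Your plan is to apply Lemma~\ref{l:d-scaling} with $t=1$ to each $(g(\tilde q n))_n$ and take a union over $\tilde q$. This fails on two counts. First, Lemma~\ref{l:d-scaling} applied to $h(n)=g(\tilde q n)$ controls $h(d^2 n+x_d)=g(\tilde q d^2 n+\tilde q x_d)$, not $g(\tilde q d^2 n+m_0)$; since $m_0$ need not be a multiple of $\tilde q$, the shifts do not match, and total equidistribution does not pass to an arbitrary additive shift of the argument. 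Second, the family of relevant $\tilde q$ is not bounded: already the powers of $2$ below $S(T)$ can be far more numerous than $\delta(T)^{-O(1)}\asymp(\log\log\log T)^{O(1)}$, so the union bound destroys the smallness of the exceptional set. (Applying \cite[Proposition 6.4]{bm} instead with modulus $qd^2$ is not an option either: $qd^2$ is not $w(T)$-smooth, and $(Wqd^2)^{n\ell^2}$ need not lie below $S(T)$ for $d$ up to $T^{1/C_0}$.)

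The paper avoids this by \emph{not} treating \cite[Proposition 6.4]{bm} as a black box. It first unfolds $R$ into a lattice-point count in $\ZZ^n$, producing the $n$-parameter sequence $\x\mapsto g\big((\bN_K(Wq\x+\y)-A-Wb)/(\epsilon Wq)\big)$. Lemma~\ref{l:poly-subsequences} is applied once to this multiparameter sequence (using only the smoothness of $Wq$) to obtain equidistribution; Lemma~\ref{l:d-scaling} with $t=n$ then handles the further substitution $\x\mapsto d^2\x+\tilde\x_d$ for all but a negligible set of $d$; and Lemma~\ref{l:multi-to-one} reduces back to one-parameter sequences. The order of operations---unfold first, absorb the smooth $Wq$-scaling, and only then the rough $d^2$-scaling, all at the multiparameter level---is precisely what eliminates the need for any union over $\tilde q$.
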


\begin{proof}[Proof of Theorem \ref{p:nilsequences} assuming Proposition 
\ref{p:nils-equi}]
We follow the strategy from \cite[\S2]{GT-nilmobius}.
To prove Theorem \ref{p:nilsequences} we may restrict attention to the case 
where $Q \leq \log \log \log N$ as the statement is trivially true otherwise.
This allows us to apply \cite[Theorem 3]{lmf} with the following parameters 
(distinguished by a tilde from already existing ones):
$$
\tilde N=N, \quad
\tilde T= T/W(N), \quad
\tilde k= w(N)=\log \log N,\quad
\tilde Q_0= \log \tilde k, \quad 
\tilde R=W(N)
$$
and, finally, $\tilde B = E$ and 
$\tilde E > 2n \ell^2 \ell^{\ell + 1}$.
Observe that $W(N) \ll (\log N)^{C_1 \log \log N}$. 
Whence, $\tilde R$ satisfies the required condition that
$\tilde R(N)^t \ll_t N$ for all $t>0$ as $N \to \infty$.
By \cite[Theorem 3]{lmf} we therefore obtain an integer $Q'$ such that
$$\log \log \log N \leq Q' \ll (\log \log \log N)^{O_{m_G, \ell,E}(1)}$$
and a partition $\mathcal{P}$ of the set 
$\{1, \dots, \tilde T\}$ into $\ll W^{O_{\ell, m_G, n, E}(1)}$ 
disjoint subprogressions, each of some $w(N)$-common difference 
$q(P) \ll W^{O_{\ell, m_G, n, E}(1)}$ and length $\frac{T}{Wq(P)} + O(1)$.
Along each progression $P = \{a < n < b : n \equiv r \Mod{q}\}$ from 
$\mathcal{P}$, the polynomial sequence $g$ has a factorisation
$$g(qm+r) = \eps_P(m) g'_P(m)\gamma_P(m)$$
with the properties that
\begin{enumerate}
 \item $\eps_P: \ZZ \to G$ is $(Q',\frac{T}{Wq})$-smooth\footnote{The notion of 
 smoothness was defined in \cite[Def.\ 1.18]{GT-polyorbits}. 
 A sequence $(\eps (n))_{n\in \ZZ}$ is said to be \emph{$(M,N)$-smooth} if 
 both $d_{\mathcal{X}}(\eps(n),\id_G) \leq N$ and 
 $d_{\mathcal{X}}(\eps(n),\eps(n-1)) \leq M/N$ hold 
 for all $1 \leq n \leq N$.};
 \item
 $g'_P: \ZZ \to G'$ takes values in $G'$ and for each $w(N)$-smooth number 
$\tilde q < (q q_{\gamma_P}W)^{\tilde E}$ the finite sequence 
$(g'_P(\tilde q m)\Gamma')_{m \leq T/(Wq \tilde q)}$ is totally
${Q'}^{-E}$-equidistributed in $G'\Gamma/\Gamma$;
 \item $\gamma_P: \ZZ \to G$ is a product 
 $\gamma_P(m) = \gamma_1(m) \dots \gamma_{t}(m)$ of length at most $m_G$ of
 $Q$-rational sequences $\gamma_i$. 
 It gives rise to a periodic sequence $(\gamma_P(m)\Gamma)_{m \in \ZZ}$ with  
 $w(N)$-smooth period $q_{\gamma_P} \leq Q'$.
\end{enumerate}
Any progression $P \in \mathcal{P}^*$ as above can be split into 
$q_{\gamma_P} \leq Q'$ 
subprogressions on which $\gamma_P$ is constant.
Next, we split each of these subprogressions into pieces of diameter between
$Q'^{-c^*E}T/W$ and $2Q'^{-c^*E}T/W$ for a small parameter $c^* \in (0,1)$ which 
will be determined later.
Let $\mathcal{P}^*$ denote the collection of the resulting bounded diameter 
pieces of all progressions $P\in \mathcal{P}$ 
and note that each $P' \in \mathcal{P}^*$ is of the form
\begin{equation}\label{e:P'}
P' = \left\{q q_{\gamma} m + r' : 
\frac{\delta_1 T}{Wqq_{\gamma}} < m \leq \frac{\delta_2 T}{Wqq_{\gamma}} \right\},
\end{equation}
for a $w(T)$-smooth period $q$ as before, for some $0 \leq r' < q q_{\gamma}$, 
and for
$\delta_1, \delta_2 \in [0,1]$ such that
$Q'^{-c^*E} \leq \delta_2 - \delta_1 \leq 2Q'^{-c^*E}$ and either
$\delta_1=0$ or $\delta_1 > Q'^{-c^*E}$.
For each progression $P' \in \mathcal{P}^*$ let $s_{P'}$ denote its smallest 
element.
If $F: G/\Gamma \to \CC$ is a Lipschitz function, then the right-invariance of 
the metric $d_{\mathcal{X}}$, defined in \cite[Definition 2.2]{GT-polyorbits}, 
implies for any $m,m'$ that belong to the same element of $\mathcal{P}^*$ that
\begin{align*}
&|F(\eps(m) g'(m) \gamma(m) \Gamma)
- F(\eps(m') g'(m) \gamma(m) \Gamma)|\\
&\leq \|F\|_{\mathrm{Lip}}~ 
d_{\mathcal{X}}(\eps(m) g'(m) \gamma(m),\eps(m') g'(m) \gamma(m)) \\
&= \|F\|_{\mathrm{Lip}}~ d_{\mathcal{X}}(\eps(m),\eps(m') ) \\
&\leq \|F\|_{\mathrm{Lip}}~ |m-m'|Q'W/T \\
&\ll \|F\|_{\mathrm{Lip}}~ Q'^{1-c^*E}
\end{align*}
if $\eps$, $\gamma$ and $g'$ satisfy the respective conditions (1)--(3) from 
above.
This estimate allows one to fix for any $P' \in \mathcal{P}^*$ the contribution 
of $\eps$.
To see this, suppose $P'$ arises from the progression $P\in \mathcal{P}$, and 
define
$$
\mu_{R^*} :=
\kappa^{\epsilon}
    \frac{\rho(W,A)}{W^{n-1}}
    \prod_{p>w(N)} 
    \bigg( 1 -
    \frac{\rho(p^2,0)}{p^{2n}} \bigg).
$$
By combining the previous estimate with Lemmas \ref{l:R-mean-value} and
\ref{l:major-arc}, we deduce that
\begin{align*}
&\sum_{m\in P'} \Big(R^*(W m + A) - \mu_{R^*}\Big) F(g(m)\Gamma) \\
&= \sum_{m\in P'} \Big(R^*(W m + A) - \mu_{R^*}\Big) 
  F(\eps_P(s_{P'}) g'_P(m) \gamma_P(m) \Gamma) \\
&\quad + O\left(\|F\|_{\mathrm{Lip}} Q'^{1-c^*E}
  \left(\mu_{R^*} \#P' + W^{O_{l,m_G,n,E}(1)} T^{1 - \frac{1}{20n}}\right)
  \right).
\end{align*}
In view of Theorem \ref{p:nilsequences} the error term above is still acceptable 
when summed over all $P' \in \mathcal{P}*$, which allows us to exclude it from 
further observation.

The remaining sequence $m \mapsto F(\eps(s_{P'}) g'(m) \gamma(s_{P'}) \Gamma)$ 
can be reinterpreted as an equidistributed nilsequence, as is shown in 
\cite[\emph{Claim} in \S2]{GT-nilmobius}.
Indeed, setting
\begin{align*}
 H_{P'} &:= \gamma_{P}(s_{P'})^{-1} G' \gamma_{P}(s_{P'}),\\
(H_{P'})_{\bullet} &:= \gamma_{P}(s_{P'})^{-1} G'_{\bullet} \gamma_{P}(s_{P'}),\\
\Lambda_{P'} &:= \Gamma \cap H_{P'},\\
g_{P'}(m) &:= \gamma_{P}(s_{P'})^{-1} g'_{P}(m) \gamma_{P}(s_{P'})
\end{align*}
and
$$
F_{P'}: H_{P'}/\Lambda_{P'} \to [-1,1], \qquad 
F_{P'}(x\Lambda_{P'}) := F(\eps_{P}(s_{P'}) \gamma_{P}(s_{P'}) x \Gamma),
$$
the \emph{Claim} guarantees the existence of a Mal'cev basis for 
$H_{P'}/\Lambda_{P'}$ 
adapted to the filtration $(H_{P'})_{\bullet}$ that is ${Q'}^{O(1)}$-rational in 
terms of the basis $\mathcal{X}$.
This basis induces a metric on $H_{P'}/\Lambda_{P'}$ with respect to which we 
have on the one hand $\|F_{P'}\|_{\mathrm{Lip}} \leq {Q'}^{O(1)}$ and on the 
other hand each of the sequences 
$(g_{P'}(\tilde{q}m))_{m \leq T/(Wq(P)\tilde{q})}$ for 
$w(N)$-smooth $\tilde{q} < (q(P) q_{\gamma_P}W)^{2n\ell^2 \ell^{\ell+1}}$ is 
totally ${Q'}^{-cE+O(1)}$-equidistributed for some 
$c>0$ depending only on $m_G$ and $d$.

We aim to apply Proposition \ref{p:nils-equi} making use of the 
equidistribution properties of $g_{P'}$.
To prepare this application, first note that Lemma \ref{l:weaker-assumptions} 
implies that every sequence 
$$g_{P',r}(m) := g_{P'}(q_{\gamma_P} m + r), \quad (0 \leq r < q_{\gamma_P}),$$
has the property that for every
$w(N)$-smooth integer  
$\tilde{q} < (q(P)q_{\gamma_P}W)^{2n\ell^2}/q_{\gamma_P}$
the sequence
$(g_{P',r}(\tilde{q}m))_{m \leq T/(Wq(P)q_{\gamma_P}\tilde{q})}$ 
is totally ${Q'}^{-c'E+O(1)}$-equidistributed for some 
$c'>0$ depending only on $m_G$ and $d$.
Thus, we may set $S_{P'}(N):=(q(P)q_{\gamma_P}W)^{n\ell^2}$ for any 
$P' \subset P \in \mathcal{P}$.  
This quantity will play the role of $S(N)$ from Proposition \ref{p:nils-equi}.
It will, in particular, allow us to take $q=q(P)q_{\gamma_P}$ in the 
application below.

Next, we need to ensure that the mean value of the Lipschitz function Proposition 
\ref{p:nils-equi} will be applied with vanishes.
To this regard, note that Lemma \ref{l:R-mean-value} implies
$$ 
\sum_{m\in P'} \Big(R^*(W m + A) - \mu_{R^*}\Big) 
\ll q(P)q_{\gamma} W (T/W)^{1 - \frac{1}{20n}}
\ll_{\eps} T^{1 - \frac{1}{20n} + \eps}.
$$
This allows us to replace $F_{P'}$ by
$(F_{P'} - \int_{H_{P'}/\Lambda_{P'}} F_{P'})$ since
\begin{align*}
&\sum_{m\in P'} \Big(R^*(W m + A) - \mu_{R^*}\Big) 
  F(\eps_P(m_{P'}) g'_P(m) \gamma_P(m) \Gamma) \\
&= \sum_{\frac{\delta_1T}{Wqq_{\gamma}} < m \leq 
\frac{\delta_2T}{Wqq_{\gamma}}} 
 \Big(R^*(W (q(P)q_{\gamma} m + r') + A) - \mu_{R^*}\Big) 
  \left(F_{P'}(g_{P',r}(m)\Lambda_{P'})
  - \int_{H_{P'}/\Lambda_{P'}} F_{P'}\right)\\
&\qquad \qquad \qquad
  + O_{\eps} \left(T^{1 - \frac{1}{20n} + \eps} \right).
\end{align*}
Here we made use of the notation from \eqref{e:P'}.
Note that the error term is negligible even when summed over all 
$P' \in \mathcal{P^*}$ and can be ignored.

We are now ready to apply Proposition \ref{p:nils-equi} for each 
$P'\in \mathcal{P}^*$ to the sum on the left hand side above with 
$\delta(N)= (\log \log \log N)^{-1}$ and with  
$\delta(N)^E$ replaced by ${Q'}^{-c'E+O(1)}$. 
Since Proposition \ref{p:nils-equi} cannot be applied to the short progression 
$P'$ directly, we apply it once with $T$ replaced by $\delta_2 T/(Wqq_{\gamma})$ 
and once with $T$ replaced by $\delta_1 T/(Wqq_{\gamma})$.
Resulting from this double-counting, we pick up a factor of $O(Q'^{c^*E})$ 
along each progression $P$ from the original decomposition~$\mathcal{P}$.
In total we obtain
\begin{align*}
& \sum_{P \in \mathcal{P}}
  \sum_{\substack{P' \in \mathcal{P}^*\\ P' \subset P}}
  \sum_{n\in P'} \Big(R^*(W m + A) - \mu_{R^*}\Big) 
  \left(F_{P'}(g'(m)\Lambda_{P'})
  - \int_{H_{P'}/\Lambda_{P'}} F_{P'}\right)\\
&\ll_{m_G, \ell, n} 
\left(\sum_{P \in \mathcal{P}} Q'^{c^*E} \# P\right)
{Q'}^{- cE/E_0 + O(1)} 
(1+\|F\|_{\mathrm{Lip}}) 
\frac{\rho(W,A)}{W^{n-1}}
\prod_{p>w(T)} 
\bigg( 1 - \frac{\rho(p^2,0)}{p^2}\bigg)\\
&\ll_{m_G, \ell, n} 
\frac{T}{W}
{Q'}^{c^*E - cE/E_0 + O(1)} 
(1+\|F\|_{\mathrm{Lip}}) 
\frac{\rho(W,A)}{W^{n-1}}
\prod_{p>w(T)} 
\bigg( 1 - \frac{\rho(p^2,0)}{p^2}\bigg).
\end{align*}
Choosing $c^* < c/(2E_0)$ and recalling that $Q' \geq \log \log \log N$, 
this implies the result.
\end{proof}

\begin{proof}[Proof of Proposition \ref{p:nils-equi}]
We observe first of all that \eqref{eq:rho-p^2-0} implies that 
$$\prod_{p>w(N)} 
\bigg( 1 - \frac{\rho(p^2,0)}{p^2}\bigg)
\asymp 1,$$
provided $N$ is sufficiently large, so that the last factor in the bound can be 
ignored.

Our aim is to deduce this proposition from the proof of 
\cite[Proposition 6.4]{bm} by using the decomposition 
$ R^*_S(m) = \sum_{d^2 | m, (d,W)=1} \mu(d) R(m)$, valid for all $m$ with
$m \Mod{W} \in \mathcal{A}(R_S^*,N)$, together with Lemmas 
\ref{l:poly-subsequences}, \ref{l:d-scaling} and \ref{l:multi-to-one}.
Indeed, the decomposition yields
\begin{align} \label{eq:lattice}
\nonumber
&\frac{1}{T}
 \sum_{0< \epsilon m \leq T} R^*_S(W(qm + b)+A) F(g(|m|)\Gamma) \\
\nonumber
&=  \sum_{\substack{d \leq N^{1/2}\\ \gcd(d,W)=1}} 
\frac{\mu(d)}{T}
\sum_{\substack{0< \epsilon m \leq T \\
      W(qm + b)+A \equiv 0 \Mod{d^2}}}
R(W(qm + b)+A) F(g(|m|)\Gamma) \\
&=  \sum_{\substack{d \leq N^{1/2}\\ \gcd(d,W)=1}}
\frac{\mu(d)}{T}
\sum_{\substack{0< \epsilon m \leq B 
  \\ m\equiv A+Wb \Mod{Wq}
  \\ m\equiv 0 \Mod{d^2}}} R(m)
  F\Big(
    g\Big(\frac{m-A-  Wb} {\epsilon Wq}\Big) \Gamma
   \Big),
\end{align}
where $B := Wq(T + b) + A \asymp WqT$.
For fixed $d$, the inner sum will be estimated using the strategy from 
\cite[Proposition 6.4]{bm}.
Due to the additional restriction $m \equiv 0 \Mod{d^2}$ that appears here, we 
need to work with subsequences of the nilsequence that mattered in 
\cite[Proposition 6.4]{bm}.
The results from Section \ref{s:technical} provide the necessary information on 
equidistribution properties of the new sequences as $d$ varies.
Since all these results require $d$ to be sufficiently small, we begin by 
restricting the summation in $d$. 

The estimate \eqref{eq:tail-est} allows us to remove large values of $d$ from 
consideration.
In particular, it shows that the summation can be truncated at $d \leq N^{1/C_0}$ 
for any fixed $C_0>2$ at the expense of an error term of order 
$O(T^{-1}(WqT)^{1-1/O(C_0)})$.
Since $Wq \ll_{\eps} T^{\eps}$, this error is $o(1)$.
In order to obtain a direct sum over $F$, we aim to move the appearance of the 
$R$ function in \eqref{eq:lattice} from the argument of the summation to the 
summation condition.
For this purpose, let 
$$
\mathcal Y = 
 \left\{ 
\y \in (\ZZ/Wq\ZZ)^n: 
\begin{array}{l}
 \bN_K(\y) \equiv A+Wb \Mod{Wq} 
\end{array}
 \right\},
$$
so that $\#\mathcal Y = \rho(Wq, A+Wb)$. 
Given $\y \in \mathcal{Y}$ and $d$ with $\gcd(d,W)=1$, we further set
$$
\mathcal X_{\y,d} = 
 \Big\{ 
\tilde \x_d \in (\ZZ/d^2\ZZ)^n: 
\begin{array}{l}
 \bN_K(Wq \tilde\x_d + \y) \equiv 0 \Mod{d^2}
\end{array}
 \Big\},
$$
so that $\#\mathcal X_{\y,d} = \rho(d^2, 0)$.
Recall that
$$R(m)=
\1_{m \not= 0} \cdot~
\#\left\{\mathbf{x} \in \ZZ^{n}\cap\mathfrak{D}^+:
\begin{array}{l}
\bN_{K}(\x)=m
\end{array}
\right\}.
$$
Thus \eqref{eq:lattice} becomes 
\begin{align}\label{eq:nilsequ-1}
\nonumber
  \sum_{\substack{d \leq N^{1/C_0}\\ \gcd(d,W)=1}}
  \frac{\mu(d)}{T''}
& \sum_{\y \in \mathcal{Y}}
  \sum_{\tilde\x_d \in \mathcal{X}_{\y,d}}
  \sum_{\substack{ \x \in \ZZ^n \\
        Wq(d^2\x + \tilde \x_d)+ \y \\ \in 
  B^{1/n} \mathfrak{X}(1) }}
  F\Big(g\Big(
   \frac{\bN_K(Wq(d^2\x + \tilde \x_d)+ \y)-A - Wb}{\epsilon Wq}\Big)
   \Gamma\Big) \\
& + o(1),
\end{align}
where 
$\mathfrak{X}(1)
= \{\x\in \mathfrak{D}^+ : 0<\epsilon\nf_K(\x)\leq 1\}$ is a compact set.

As in \cite{bm}, we proceed with the analysis of the inner sum over $\x$ by 
fixing the first $n-1$ coordinates of $\x$.
Since $\mathfrak{X}(1)$ is compact, we have 
$\mathfrak{X}(1) \subset (-\alpha,\alpha)^n$ for some positive constant
$\alpha =O(1)$ which allows us to restrict the ranges of the first $n-1$ 
coordinates we consider:
Letting $\pi:\RR^n \to \RR^{n-1}$ denote the projection onto the
coordinate plane $\{x_n=0\}$, it thus suffices to consider the set 
$\{\x:\pi(\x) = \a\}$, where $\a$ runs over all points in 
$$\ZZ^{n-1}\cap \Big(\frac{B^{1/n}(-\alpha,\alpha)^{n-1} - Wq \pi(\tilde \x_d) 
-\y}{Wqd^2}\Big).$$
Assuming that $T$ is sufficiently large, the latter set can, in fact, be replaced 
by
\begin{equation} \label{eq:Z_d-def}
\mathcal{Z}_{d} =
\ZZ^{n-1}\cap \frac{B^{1/n}}{Wqd^2}(-2\alpha,2\alpha)^{n-1}. 
\end{equation}

Returning to \eqref{eq:nilsequ-1}, we consider the argument of $g$.
Since the coefficient of $X_n^n$ in $\bN_K(X_1, \dots, X_n)$ is non-zero,
we obtain an integral polynomial of degree $n$ and leading coefficient
$\epsilon N_{K/\QQ}(\omega_n) d^{2n}(Wq)^{n-1}$ when fixing all but the 
$n$th variable in
$$
\frac{\bN_K(Wq(d^2\x + \tilde \x_d)+ \y)-A-  Wb}{\epsilon Wq}.
$$ 
If $\pi(\x) =\a$, we let $P_{\a;\y;d}(x)$ denote this polynomial.

Our next step is to show that most of the sequences 
$g(P_{\a;\y;d}(x)\Gamma)_{|x| \ll B^{1/n} / Wqd^2}$ are equidistributed.
The assumptions on $g$ and Lemma \ref{l:poly-subsequences} imply that the 
sequence
$$
\Big(g\Big(\frac{\bN_K(Wq \x + \y)-A-  Wb}{\epsilon Wq}\Big)\Gamma\Big)_
{\x \in [B^{1/n}/(Wq)]^n}
$$ 
is totally $\delta(N)^{E/C'}$-equidistributed for some 
$1 \leq C' \ll_{m_G, \ell, n} 1$, provided $E>C'$.
Applying Lemma \ref{l:d-scaling} to this sequence, we thus deduce that there is 
$1 \leq C'' \ll_{m_G, \ell, n} 1$ such that the sequence
$$
\Big(g\Big(
   \frac{\bN_K(Wq(d^2\x + \tilde \x_d)+ \y)-A - Wb}{\epsilon Wq}\Big)
   \Gamma\Big)_{\x \in [B^{1/n}/(Wqd^2)]^n}
$$
is totally $\delta(N)^{E/C''}$-equidistributed for all but 
$o(\delta(N)^{O_{m_G, \ell, n}(E/C'')}K)$ integers $d$ such that
$d \in [K,2K)$ and $\gcd(d,W)=1$, and for all integers 
$K \in (1,N^{1/C_0})$, provided $E>C''$.
Finally, for all unexceptional values of $d$, Lemma \ref{l:multi-to-one} implies 
that there is $1 \leq C''' \ll_{m_G, \ell, n} 1$ such that, provided $E>C'''$,
the sequence
$$
\Big(g\Big(P_{(a_1,\dots,a_{n-1});\y;d}(x)\Big)
   \Gamma\Big)_{x \leq B^{1/n}/(Wqd^2)}
$$
is totally $\delta(N)^{E/C'''}$-equidistributed for all but 
$o(\delta(N)^{O_{m_G, \ell, n}(E/C''')} (\frac{B^{1/n}}{Wqd^2})^{n-1})$ 
integer tuples $(a_1, \dots, a_{n-1})$ with
$1 \leq a_1, \dots, a_{n-1} \leq B^{1/n}/(Wqd^2)$.

Before we exploit these equidistribution properties, we aim to bound the 
contribution of exceptional values of $d$ and $\a$.
Using the fact that $\|F\|_{\infty} \ll 1$, the contribution of exceptional 
values for $d$ to the main term of \eqref{eq:nilsequ-1} may trivially be bounded
by
\begin{align*}
& \frac{1}{T}
  \#\mathcal{Y}
  \sum_{k=0}^{\frac{\log N}{C_0 \log 2}}
  \sum_{\substack{d \sim 2^k \\ \gcd(d,W)=1 \\ d \text{ exceptional}}}
  \rho(d^2,0)
  \sum_{\a \in \mathcal{Z}_d}
  \frac{B^{1/n}}{Wq d^2}\\
&\ll \frac{1}{T}
  \rho(Wq,A+Wb)
  \sum_{k=0}^{\frac{\log N}{C_0 \log 2}}
  \sum_{\substack{d \sim 2^k \\ \gcd(d,W)=1 \\ d \text{ exceptional}}}
  \rho(d^2,0)
  \frac{TWq}{(Wq d^2)^n}. 
\end{align*}  
In view of \eqref{eq:lifting}, the above is further bounded by
\begin{align*}  
&\ll \frac{\rho(W,A)}{W^{n-1}} 
  \sum_{k=0}^{\frac{\log N}{C_0 \log 2}}
  \sum_{\substack{d \sim 2^k \\ \gcd(d,W)=1 \\ d \text{ exceptional}}}
  \frac{\rho(d^2,0)}{d^2} \\
&\ll \frac{\rho(W,A)}{W^{n-1}} 
  \sum_{k=0}^{\frac{\log N}{C_0 \log 2}} 
  \delta(N)^{O_{m_G, \ell, n}(E/C'')} (2^{1-\frac{1}{2}})^{-k} \\
& \ll \frac{\rho(W,A)}{W^{n-1}}
  \delta(N)^{O_{m_G, \ell, n}(E/C'')} ,
\end{align*}
where we employed \eqref{eq:rho(d^2,0)-bound} with $\eps=\frac{1}{2}$.

Recall the definition of $\mathcal{Z}_d$ from \eqref{eq:Z_d-def}.
Then the contribution from exceptional values of $\a$ may be bounded in 
a similar manner: 
\begin{align*}
&  \frac{1}{T}
  \#\mathcal{Y}
  \sum_{\substack{d \leq N^{1/C_0} \\ \gcd(d,W)=1}}
  \rho(d^2,0)
  \sum_{\substack{\a \in \mathcal{Z}_d\\ \a \text{ exceptional}}}
  \frac{B^{1/n}}{Wq d^2}\\
&\ll \frac{1}{T}   \rho(Wq,A+Wb)
  \sum_{\substack{d \leq N^{1/C_0} \\ \gcd(d,W)=1}}
  \rho(d^2,0) \delta(N)^{O_{m_G, \ell, n}(E/C''')}
  \frac{TqW}{(Wq d^2)^n} 
  \\
&\ll \delta(N)^{O_{m_G, \ell, n}(E/C''')} \frac{\rho(W,A)}{W^{n-1}} 
  \sum_{k=0}^{\frac{\log N}{C_0 \log 2}}
  \sum_{\substack{d \leq N^{1/C_0} \\ \gcd(d,W)=1}}
  \frac{\rho(d^2,0)}{d^2} \\
&\ll \delta(N)^{O_{m_G, \ell, n}(E/C''')} 
\frac{\rho(W,A)}{W^{n-1}} .
\end{align*}

In the case of unexceptional values of $d$ and $a_1, \dots, a_{n-1}$, we can 
finally proceed in exactly the same way as in the proof of 
\cite[Proposition 6.4]{bm}.
First of all, we recall from \cite{bm} how the lines 
$\{\x \in \RR: \pi(\x) = (a_1, \dots, a_{n-1})\}$ intersect the domain 
$\mathfrak{X}(1) \subset (-\alpha,\alpha)^n$:
Let $\a'=(a'_1,\dots,a'_{n-1})$, with  $|\mathbf{a}'| < \alpha$,
and consider the line 
$\ell_{\a'}: (-\alpha,\alpha) \to \RR^n$
given by $\ell_{\a'}(x) = (\a',x)$.
For $\eps \geq 0$, let $\partial_{\eps} \mathfrak{X}(1) \subset \RR^n$
denote the set of points at distance at most $\eps$ to the boundary of
the closure of $\mathfrak{X}(1)$.
We note that the set
$$\left\{x \in (-\alpha,\alpha): \ell_{\a'}(x) \in 
\mathfrak{X}(1) \setminus \partial_{0}\mathfrak{X}(1) \right\}$$
is the union of disjoint open intervals.
By removing all intervals of length at most $\eps$, we obtain a collection
of at most $2 \alpha \eps^{-1} \ll \eps^{-1}$ open intervals
$I_1(\a'), \dots, I_{k(\a')}(\a') \in (-\alpha, \alpha)$ such
that any $x\in (-\alpha, \alpha)$ satisfies the implication
\begin{align*}
\ell_{\a'}(x) \in 
\mathfrak{X}(1) \setminus \partial_{\eps}\mathfrak{X}(1)
\implies 
x \in I_j(\a') \text{ for some }j \in \{1,\dots,k(\a')\}.
\end{align*}
We will choose a suitable value of $\eps$ at the end of the proof.

Observe that any interval $(z_0,z_1) \subset (-\alpha,\alpha)$ can be
expressed as a difference of intervals in $(-\alpha,\alpha)$ that have
length at least $2\alpha/3$.
Indeed, $z_0$ and $z_1$ partition $(-\alpha,\alpha)$ into three (possibly
empty) intervals, at least one of which has length at least
$2\alpha/3$.
Thus, one of the three representations
$$
(z_0,z_1)
=(-\alpha, z_1) \setminus (-\alpha, z_0] 
=(z_0, \alpha)\setminus[z_1, \alpha)
$$
has the required property. 
For each $\a'$ and $j \in \{1,\dots,k(\a')\}$, we let 
$I_j(\a') = J_j^{(1)}(\a')\setminus J_j^{(2)}(\a')$ be such a
decomposition, where 
$J_j^{(2)}(\a')$
is possibly empty.

Given any $\a \in \ZZ^{n-1}$, we let $\a'$ denote from now on the specific vector
$$\a'=B^{-1/n}(Wq d^2\a + \pi(Wq \tilde\x_d + \y)).$$
With this notation, the main term of \eqref{eq:nilsequ-1} equals
\begin{align} \label{eq:nilsequ-1'}
  \sum_{\substack{d \leq N^{1/C_0}\\ \gcd(d,W)=1}}
& \frac{\mu(d)}{T}
  \sum_{\y \in \mathcal{Y}}
  \sum_{\tilde\x_d \in \mathcal{X}_{\y,d}}
  \sum_{\substack{\a \in \ZZ^{n-1}\\ |\a'|<\alpha}} 
  \sum_{j=1}^{k(\a)}\\
\nonumber
& \sum_{x \in \ZZ}
\Big\{ 
   \1_{ B^{-1/n}(Wq(d^2 \x + \tilde\x_d)+ \y) \in J_j^{(1)}(\a')} - 
   \1_{ B^{-1/n}(Wq(d^2 \x +  \tilde\x_d)+ \y) \in J_j^{(2)}(\a')} 
\Big\} 
  F\Big(g\big( P_{\a,\y} (x) \big)\Gamma\Big) \\
\nonumber  
+O\bigg(  
&  \sum_{\substack{d \leq N^{1/C_0}\\ \gcd(d,W)=1}}
  \frac{\mu(d)}{T''}
  \sum_{\y \in \mathcal{Y}}
  \sum_{\tilde\x_d \in \mathcal{X}_{\y,d}}
  \#\{\x \in \ZZ^n: B^{-1/n}(Wq(d^2 \x + \tilde\x_d)+ \y) \in 
      \partial_{\eps} \mathfrak{X}(1)\} \bigg).
\end{align}
Here, the error term accounts for all points in the
$B^{1/n}\eps$-neighbourhood of the boundary of $B^{1/n}
\mathfrak{X}(1)$,
that were excluded through the choice of intervals $I_j(\a)$.
Observe that we made use of the fact $\|F\|_{\infty} \leq 1$.
Since $\mathfrak{X}(1)$ is $(n-1)$-Lipschitz parametrisable, we have
$\vol(\partial_{\eps} \mathfrak{X}(1)) \asymp \eps$.
Together with applications of \eqref{eq:lifting} and
\eqref{eq:rho(d^2,0)-bound}, this shows that the error term is bounded by
\begin{align*}
\frac{1}{T} \sum_{d: \gcd(d,W)=1} 
\#\mathcal{Y} \rho(d^2,0) \frac{\eps B}{(Wqd^2)^n} 
&\ll \eps \frac{B}{TWq}\frac{\rho(Wq,A+Wb)}{(Wq)^{n-1}} 
    \sum_d \frac{\rho(d^2,0)}{d^{2n}} \\
&\ll \eps \frac{\rho(W,A)}{W^{n-1}}.
\end{align*}

Note that the set
$$
 \left\{ x \in \ZZ: 
    \frac{Wq(d^2 x + \tilde x_{d,n}) + y_n}{B^{1/n}} 
   \in J_j^{(1)}(\a') \right\}
$$
is a discrete interval of length
$$
 \# \left\{ x \in \ZZ: 
 \frac{Wq(d^2 x + \tilde x_{d,n}) + y_n}{B^{1/n}} 
 \in J_j^{(1)}(\a') \right\}
 \asymp \frac{B^{1/n}}{Wq d^2}.
$$

Thus, the total $\delta(N)^{E/C'''}$-equidistribution property of
$\Big(g\Big(P_{\a;\y;d}(x)\Big)
   \Gamma\Big)_{x \leq B^{1/n}/(Wqd^2)}$
implies that
\begin{align*}
\bigg|  \sum_{\substack{ x \in \ZZ: \\
   B^{-1/n}(Wq (d^2x + \tilde x_{d,n}) + y_n)  \\ \in J_j^{(1)}(\a')
  }}\hspace{-0.3cm}
  F\Big(g\big( P_{\a;\y;d} (x) \big)\Gamma\Big)\bigg|
&\ll 
 \delta(N)^{E/C'''}
 \frac{B^{1/n}}{Wq d^2}
 \|F\|_{\mathrm{Lip}}.
\end{align*}
The same holds for $J_j^{(1)}(\a')$ replaced by any non-empty 
$J_j^{(2)}(\a')$. 
Hence \eqref{eq:nilsequ-1'} is bounded by
\begin{align*}
&\ll T^{-1} \sum_{d \leq N^{1/C_0}}
  \#\mathcal Y 
  \frac{\rho(d^2,0)}{d^2}
  \Big(\frac{B^{1/n}}{Wq}\Big)^{n-1} \eps^{-1}
  B^{1/n}(Wq)^{-1}
 \delta(N)^{E/C'''} \|F\|_{\mathrm{Lip}} \\
& \qquad+(\varepsilon + \delta(N)^{O_{m_G,\ell, n}(E/C'')} + 
\delta(N)^{O_{m_G,\ell, n}(E/C''')})\frac{\rho(W,A)}{W^{n-1}} \\ 
&\ll T^{-1} \#\mathcal Y \frac{B}{(Wq)^{n}} \eps^{-1}
 \delta(N)^{E/C'''} \|F\|_{\mathrm{Lip}} \\
&\qquad+(\varepsilon + \delta(N)^{O_{m_G,\ell, n}(E/C'')} + 
\delta(N)^{O_{m_G,\ell, n}(E/C''')})\frac{\rho(W,A)}{W^{n-1}}\\ 
&\ll \frac{\rho(W,A)}{W^{n-1}} 
 \Big(
  \eps^{-1} \delta(N)^{E/C'''} \|F\|_{\mathrm{Lip}}
+\eps + \delta(N)^{O_{m_G,\ell, n}(E/C'')} + 
\delta(N)^{O_{m_G,\ell, n}(E/C''')} \Big),
\end{align*}
where we applied the bound \eqref{eq:rho(d^2,0)-bound}, 
the fact that $\#\mathcal{Y}= \rho(Wq,A+Wb)$, and the lifting property 
\eqref{eq:lifting}.
Choosing $\eps = \delta(N)^{E/2C'''}$ and $E_0=C'''$ completes the proof.
\end{proof}

\end{document}